\theoremstyle{plain}
    \newtheorem{theorem}{Theorem}[section]
    \newtheorem{lemma}[theorem]{Lemma}
    \newtheorem{corollary}[theorem]{Corollary}
    \newtheorem{proposition}[theorem]{Proposition}
 \theoremstyle{definition}
    \newtheorem{example}[theorem]{Example}
    \newtheorem{remark}[theorem]{Remark}
\theoremstyle{remark}
\numberwithin{equation}{section}
    \newcommand{\R}{\mathbb{R}}
    \newcommand{\C}{\mathbb{C}} 
    \newcommand{\N}{\mathbb{N}}
    \newcommand{\Z}{\mathbb{Z}}
    \DeclareMathOperator{\supp}{supp}
\DeclareMathOperator{\Res}{Res}
 \DeclareMathOperator{\Tr}{Tr}
  \DeclareMathOperator{\Cl}{Cl}
\newcommand{\kg}{\mathfrak{g}} 
\newcommand{\kk}{\mathfrak{k}}  
\newcommand{\ka}{\mathfrak{a}}   
\newcommand{\km}{\mathfrak{m}}   
\newcommand{\kt}{\mathfrak{t}}
\newcommand{\kh}{\mathfrak{h}} 
\newcommand{\kn}{\mathfrak{n}} 
\newcommand{\kp}{\mathfrak{p}}
\newcommand{\cH}{\mathcal{H}}
\newcommand{\cB}{\mathcal{B}}
\newcommand{\cL}{\mathcal{L}}
\newcommand{\cC}{\mathcal{C}}
\newcommand{\cN}{\mathcal{N}}
\DeclareMathOperator{\SL}{SL}
\DeclareMathOperator{\SO}{SO}
\DeclareMathOperator{\Spin}{Spin}
\DeclareMathOperator{\SU}{SU}
\DeclareMathOperator{\U}{U}
\DeclareMathOperator{\vol}{vol}
\DeclareMathOperator{\Ad}{Ad}
\DeclareMathOperator{\reg}{reg}
\DeclareMathOperator{\rank}{rank}
\DeclareMathOperator{\ind}{ind}
\DeclareMathOperator{\Ind}{Ind}
\DeclareMathOperator{\temp}{temp}
\DeclareMathOperator{\spec}{spec}
\DeclareMathOperator{\HS}{HS}
\DeclareMathOperator{\even}{even}
\DeclareMathOperator{\ch}{ch}
\DeclareMathOperator{\End}{End}
\DeclareMathOperator{\Hom}{Hom}
\DeclareMathOperator{\DInd}{D-Ind}
\newcommand{\Spinc}{\Spin^c}
\newcommand{\beq}[1]{\begin{equation} \label{#1}}
\newcommand{\eeq}{\end{equation}}
\begin{document}

\title{An index theorem for higher orbital integrals}  
\date{\today}
\author{Peter Hochs,\footnote{Radboud University, University of Adelaide, \texttt{p.hochs@math.ru.nl}} {} 
Yanli Song\footnote{Washington University in St.\ Louis, \texttt{yanlisong@wustl.edu}}  {} and
 Xiang Tang\footnote{Washington University in St.\ Louis, \texttt{xtang@wustl.edu}} 
 }  
\maketitle

\begin{abstract}
Recently, two of the authors of this paper constructed cyclic cocycles on Harish-Chandra's Schwartz algebra of linear reductive Lie groups that detect all information in the $K$-theory of the corresponding group $C^*$-algebra. The main result in this paper is an index formula for the pairings of these cocycles with equivariant indices of elliptic operators for proper, cocompact actions. This index formula completely determines such equivariant indices via topological expressions.
\end{abstract}

\setcounter{tocdepth}{2}

\tableofcontents

\section{Introduction}

Consider a locally compact group $G$, and a proper, isometric action by $G$ on a Riemannian manifold $X$, such that $X/G$ is compact. Let $D$ be a $G$-equivariant, elliptic differential operator on $X$. Then $D$ has an equivariant index in the $K$-theory of the reduced group $C^*$-algebra $C^*_r(G)$,
\beq{eq G index}
\ind_G(D) \in K_*(C^*_r(G)),
\eeq
defined by the analytic assembly map \cite{Connes94}. This index generalises the equivariant index in the compact case. It has a range of applications, e.g.\ to geometry and topology via the Baum--Connes and Novikov conjectures; to questions about positive scalar curvature; to representation theory \cite{HW2}; and to geometric quantisation \cite{Landsman05}.

It is a natural question to extract numerical invariants from the index \eqref{eq G index} and compute them. This is relevant, for example, to detect the vanishing of the index, and because such numbers may have a meaning in geometry, topology or representation theory. A natural way to extract numbers from $K$-theory classes is by pairing them with \emph{traces} or more general \emph{cyclic cocycles} on the algebra in question, or an algebra with the same $K$-theory. For suitable convolution algebras on groups, such traces can be defined by \emph{orbital integrals}: integrals over conjugacy classes. For the trivial conjugacy class, one then obtains the classical von Neumann trace. It was shown in various places, see e.g. \cite{Gong, HW2, HW19,  Samurkas}, that traces defined by orbital integrals over nontrivial conjugacy classes can yield more, and relevant, information than the von Neumann trace.

In this paper, we focus on connected real linear reductive Lie groups. 
Connes and Moscovici \cite{Connes82} showed that the von Neumann trace is then only nonzero on $K$-theory classes defined by discrete series representations. So this trace is not enough to detect all information about $K$-theory classes, especially for groups with no discrete series representations. In \cite{HW19}, it is shown that one does detect all information in $K_*(C^*_r(G))$ for groups with discrete series representations if one uses traces defined by more general orbital integrals. In another direction, higher cyclic cocycles coming from group cohomology were used to extract and compute relevant numbers from the index \eqref{eq G index} in  \cite{Pflaum15}.

Recently, two of the authors of this paper developed a family of cyclic cocycles on Harish--Chandra's Schwartz algebra \cite{ST19},  for linear reductive $G$. These cocycles are higher versions, in a sense, of orbital integrals. They  detect all information about classes in $K_*(C^*_r(G))$, in the sense that if the pairings of a class in $K_*(C^*_r(G))$ with all these cyclic cocycles are zero, then it is the zero class.
%
%
 This means that an index formula for the pairing of \eqref{eq G index} with these cocycles is a complete topological description of this index. Our goal  in this paper is to prove such an index formula, Theorem \ref{thm index}.


At the end of \cite{Connes82}, Connes and Moscovici write that they consider obtaining an `intrinsic' index formula for \eqref{eq G index} for Lie groups $G$, to be a problem that deserves further study.
We believe that Theorem \ref{thm index} is  such an intrinsic index formula, detecting all relevant information, for linear reductive Lie groups.
This builds on a long development, and results by many authors, see Table \ref{table results}. This type of index theorem for pairings with higher cocycles, as well as the techniques involved,  was heavily inspired by the work of Connes and Moscovici for discrete groups, see Theorem 5.4 in \cite{CM90}.


\begin{table}
\begin{tabular}{|l|l|l|}
\hline
& trivial conjugacy class $\{e\}$ & more general conjugacy classes \\
\hline
trace & 
\begin{tabular}{l}
Connes--Moscovici 1982 \cite{Connes82} \\(homogeneous spaces)\\
Wang 2014 \cite{Wang14}\\ (general case)
\end{tabular}
&  
\begin{tabular}{l}
Hochs--Wang 2017 \cite{HW2}\\ (semisimple Lie groups)
\end{tabular}
\\
\hline
higher cocycles & 
\begin{tabular}{l}
 Pflaum--Posthuma--Tang 2015 \cite{Pflaum15} \\(cocycles from group cohomology)
 \end{tabular}&
 \begin{tabular}{l}
  Theorem \ref{thm index} \\
  (linear reductive groups)
  \end{tabular} \\
\hline
\end{tabular}
\caption{Some equivariant index theorems for Lie groups $G$,  for pairings of \eqref{eq G index} with traces and higher cocycles associated to orbital integrals}
\label{table results}
\end{table}

Theorem \ref{thm index} is stated for twisted $\Spinc$-Dirac operators. At the end of this paper, we describe how to generalise this to arbitrary elliptic operators; obtain a higher version of Connes and Moscovici's $L^2$-index theorem for homogeneous spaces; obtain  independent proofs of injectivity of Dirac induction and the fact that the cocycles from \cite{ST19} detect all information from $K_*(C^*_r(G))$; realise the generators of $K_*(C^*_r(G))$ constructed in \cite{ST19} as indices; and work out the case of complex semisimple groups.

Other applications of Theorem \ref{thm index} are of the same type as those in \cite{Pi-Po}. 
\begin{itemize}
\item When $D$ is the signature operator on $X$, the index element $\ind_G(D) $ is invariant under $G$-equivariant homotopy \cite[Theorem A]{Fukumoto}. Our theorem \ref{thm index} shows that the corresponding generalized signature number is invariant under $G$-equivariant homotopy. 
\item When $D$ is the Spin Dirac operator on a spin manifold $X$, the non-vanishing of the generalized $\hat{A}$-class on the right side would be an obstruction for $X$ to carry a $G$-invariant metric with positive scalar curvature.
\end{itemize}
We plan to investigate potential applications of our theorem in representation theory in the future.  

Let us briefly go into the idea of the proof of Theorem \ref{thm index}. The cocycles we use are associated to parabolic subgroups $P = MAN<G$, where $M$ is reductive, $A$ is isomorphic to $\R^l$ and $N$ is nilpotent.
The proof of Theorem \ref{thm index} is based on an equality between  the pairing of \eqref{eq G index} with these cocycles  and an analogous pairing of an index of an $M$-equivariant operator on $X/AN$ with an analogous cocycle for the parabolic subgroup $M<M$. Because $M$ has a compact Cartan  for the parabolic subgroups we use, the latter pairing can be computed via results from \cite{HW2}. It is an interesting question to what extent the transition between these $G$ and $M$-equivariant indices is related to parabolic induction. Some indication for this is given in Subsection \ref{sec diagram}.

\subsection*{Acknowledgements}

Hochs is partially supported by Discovery Project DP200100729 from the Australian Research Council; 
Song is partially supported by NSF grant DMS-1800667; Tang is partially supported by NSF grants DMS-1363250 and DMS-1800666.

\section{An index theorem}

Let $G$\label{pag G} be a connected, linear, real reductive Lie group. Let $X$\label{pag X} be a Riemannian manifold on which $G$ acts properly, cocompactly and isometrically. 

Fix a $G$-equivariant $\Spinc$-structure on $X$, assuming it exists. Let $S \to X$\label{pag S} be the corresponding spinor bundle. Let $W \to X$\label{pag W} be a $G$-equivariant, Hermitian vector bundle, and write $E := S\otimes W$.\label{pag E} Let $D$\label{pag D} be a twisted $\Spinc$-Dirac operator acting on sections $E$, associated to a $G$-invariant Clifford connection on $S$ and a $G$-invariant Hermitian connection on $W$. Because $D$ is an elliptic, $G$-equivariant operator, and $X/G$ is compact, we have the index
\beq{eq indG}
\ind_G(D) \in K_*(C^*_r(G)),
\eeq
defined via the analytic assembly map \cite{Connes94}. Here $\ind_G(D)$ lies in even $K$-theory if $X$ is even-dimensional, so that $S$ admits a natural grading and $D$ is odd, and in odd $K$-theory if $X$ is odd-dimensional.

Our goal is to obtain a topological expression for the pairing of \eqref{eq indG} with  natural cyclic cocycles constructed in \cite{ST19}.

\subsection{Cocycles}

We use a lower case gothic letter to denote the Lie algebra of the Lie group denoted by the corresponding upper case Roman letter. 

Let $K<G$ be maximal compact.\label{pag K} 
Let  $P<G$\label{pag P} be a parabolic subgroup. It has a Langlands decomposition $P=MAN$, where $M$\label{pag M} is reductive, $A$\label{pag A} is isomorphic to a vector space, and $N$\label{pag N} is nilpotent. 
We assume from now on that $P$ is a cuspidal parabolic subgroup. (See Section \ref{sec cuspidal} for the non-cuspidal case.)
This means that $M$ has a compact Cartan subgroup, which implies that it has discrete series representations. In several places, we will use the facts that $N$ normalises $MA$  and that $M$ and $A$ commute.

 
 Let $dg$, $dk$, $dm$, $da$ and $dn$ be Haar measures on the groups $G$, $K$, $M$, $A$ and $N$, respectively, such that
\beq{eq decomp dg}
dg = dk\, dm\, da\, dn
\eeq
according to the decomposition $G = KMAN$. All these groups are unimodular, so these measures are left- and right-invariant.
 
 Define the map  $H\colon G \to \ka$\label{pag H} by the property that for all $g \in G$, $\exp(H(g))$ is the component of $g$ in $A$ according to the decomposition $G = KMAN$.
 Set $l:= \dim (A)$.\label{pag l}
 If a basis of $\ka$ is chosen, then   for $a_1, \ldots, a_l \in \ka$, the vectors $a_1, \ldots, a_l$ together define 
  an $l \times l$ matrix. So we can take its determinant $\det(a_1, \ldots,a_l)$. This determinant depends on the choice of basis, but if $x_1, \ldots, x_n \in \ka^*$ are the coordinates on $\ka$ defined by this basis, then the volume element $\det(a_1, \ldots,a_l) dx_1 \wedge \cdots \wedge dx_l \in \bigwedge^l\ka^*$, which is the relevant object in what follows, does not. For definiteness' sake, we fix a basis such that 
 \beq{eq vol A}
  dx_1 \wedge \cdots \wedge dx_l = da,
 \eeq
   the Haar measure on $A \cong \ka$, and use this to define $\det(a_1, \ldots,a_l)$ for any $a_1, \ldots, a_l \in \ka$.
 
 In \cite{ST19}, a cyclic cocycle on \emph{Harish-Chandra's Schwartz algebra} $\cC(G)$\label{pag CG} is constructed. This algebra is a generalisation of the algebra of Schwartz functions on $\R^n$, and is defined as follows.
 Let $\pi_0$ be the unitary representation of $G$ induced from the trivial representation of a minimal parabolic subgroup. Let $\xi$ be a unit vector in the representation space of $\pi_0$, fixed by $\pi_0(K)$. Let $\Xi$ be the function on $G$ defined by 
\[
\Xi(g) = (\xi, \pi_0(g)\xi)
\]
for all $g \in G$. 
The inner product on $\kg$ defines a $G$-invariant Riemannian metric on $G/K$. For $g \in G$, let $d(g)$ be the Riemannian distance from $eK$ to $gK$ in $G/K$. 
Then $\cC(G)$ is the space of $f \in C^{\infty}(G)$ such that for all $m \geq 0$ and $X,Y \in U(\kg)$, 
\beq{eq seminorms CG}
 \sup_{g \in G} (1+d(g))^m \Xi(g)^{-1} |L(X)R(Y)f(g)| < \infty,
\eeq
where $L$ and $R$ denote the left and right regular representations, respectively.
See Section 9 in \cite{HC66}. The space $\cC(G)$ is a Fr\'echet algebra with respect to the seminorms \eqref{eq seminorms CG}. 
 

  

 Let $x \in M$\label{pag x} be a semisimple element. Let $Z := Z_M(x)$\label{pag Z} be its centraliser in $M$. Because $x$ is semisimple, the quotient $M/Z$ has a $M$-invariant measure $d(hZ)$ (also denoted by $d(mZ)$) compatible with the Haar measure $dm$ on $M$.
In Definition 3.3 in \cite{ST19}, the cyclic $l$-cocycle $\Phi^P_x$ on $\cC(G)$ is defined by\label{pag PhiPx}
\begin{multline} \label{eq def cocycle}
\Phi^P_x(f_0, \ldots, f_l) := \int_{M/Z} \int_{KN} \int_{G^l} \\
\det \bigl(H(g_1 g_2 \cdots g_l k), H(g_2 \cdots g_l k)), \ldots, H(g_{l-1}g_l k), H(g_lk)\bigr)\\
f_0(khxh^{-1}nk^{-1} (g_1 \cdots g_l)^{-1}) f_1(g_1) \cdots f_l(g_l)
\, dg_1 \cdots dg_l\, dk\, dn\, d(hZ),
\end{multline}
for $f_0, \ldots, f_l \in \cC(G)$. Theorem 3.5 in \cite{ST19} states that this is indeed a cyclic cocycle.

If   $G$ has a compact Cartan subgroup, then we can take $P = M = G$, so that
If $l = 0$. Then $\Phi^P_x$ is simply the orbital integral trace associated to $x$:
\[
\Phi^G_x(f_0) = \int_{G/Z} 
f_0(hxh^{-1})
 d(hZ).
\]

\subsection{An index pairing}

The Schwartz algebra $\cC(G)$ is a dense subalgebra of $C^*_r(G)$, closed under holomorphic functional calculus, see Theorem 2.3 in \cite{HW2}.  So $\ind_G(D) \in K_*(\cC(G))$, and we obtain the number
\[
\langle \Phi^P_x, \ind_G(D) \rangle 
\]
by the pairing between cyclic cohomology and $K$-theory. Our main result, Theorem \ref{thm index} below, is a topological expression for this number.

 The group $AN$ has no nontrivial compact subgroups and acts properly on $X$, so the action is free, and $X/AN$ is a smooth manifold.  Because $M$ commutes with $A$ and is normalised by $N$, the action by $x$ on $X$ preserves $NA$-orbits, and hence descends to an action on $X/NA$. Also, if $F \to X$ is a $G$-equivariant vector bundle, then $F/AN \to X/AN$ is an $M$-equivariant vector bundle.

Consider the fixed point set $(X/AN)^x \subset X/AN$. The connected components of $(X/AN)^x$ are submanifolds of $X/AN$ of possibly different dimensions.
We apply all constructions below to the connected components of $(X/AN)^x$ and add the results together. 

The set $(X/AN)^x$ is preserved by the action of the centraliser $Z$ of $x$ in $M$.
Let $dz$ be the Haar measure on $Z$ such that for all $\varphi \in C_c(M)$,
\[
\int_M \varphi(m)\, dm = \int_{M/Z} \int_Z \varphi(zm)\, dz\, d(mZ).
\] 
Let $\chi_x$\label{pag chix} be a smooth, compactly supported function on $(X/AN)^x$ such that for all $p \in (X/AN)^x$,
\[
\int_Z \chi_x(zp)\, dz = 1.
\]

The $\Spinc$-structure on $X$ induces an $M$-equivariant $\Spinc$-structure on $X/AN$ with spinor bundle $S_{AN}$\label{pag SAN} such that
\beq{eq SAN}
S/AN = S_{AN} \otimes F_{AN} \otimes S_{\ka} \to X/AN,
\eeq
for an $M$-equivariant, graded vector bundle $F_{AN} \to X/AN$,\label{pag FAN} and a vector space $S_{\ka}$
of dimension $2^{\lfloor \frac{\dim(\ka) - 1}{2}\rfloor}$. See Subsection \ref{sec cocycles MA} for an explicit construction, in particular \eqref{eq SAN FAN}. Let $L_{\det} \to X/AN$\label{pag Ldet} be the determinant line bundle of the $\Spinc$-structure on $X/AN$ with spinor bundle $S_{AN}$.

%

 Let $\hat A((X/AN)^x)$ be the $\hat A$-class of $(X/AN)^x$. Let $\cN \to (X/AN)^x$\label{pag cN} be the normal bundle to $(X/AN)^x$ in $X/AN$.  Let $R^{\cN}$ be the curvature of the Levi--Civita connection on $X/AN$ restricted to $\cN$.  

Consider the vector bundle\label{pag WAN}
\[
W_{AN} := F_{AN} \otimes W/AN \to X/AN.
\]
If the closure $T_x$ of the set of powers of $x$ is a compact subgroup of $M$, then 
the restriction of $W_{AN}$ to the compact set $\supp(\chi_x) \subset (X/AN)^x$ defines a class
\[
[W_{AN}|_{\supp(\chi_x)}] \in K^0_{T_x}(\supp(\chi_x)) \cong K^0(\supp(\chi_x)) \otimes_{\Z} R(T_x),
\]
where $R(T_x)$ is the representation ring of $T_x$.
 The last equality holds because $T_x$ acts trivially on $(X/AN)^x$. Evaluating the factor in $R(T_x)$ at $x$, we obtain
 \[
 [W_{AN}|_{\supp(\chi_x)}](x) \in K^0(\supp(\chi_x)) \otimes_{\Z} \C.
 \]
 Consider the Chern character
 \[
 \ch\colon K^0(\supp(\chi_x)) \otimes \C \to H^{\even}(\supp(\chi_x); \C).
 \]
 
 Suppose $P$ is a cuspidal parabolic.  Let $T<K \cap M$\label{pag T} be a maximal torus. 
\begin{theorem}[Index theorem for higher orbital integrals] \label{thm index}
For all $x \in T$,
\beq{eq index}
\langle \Phi^P_x, \ind_G(D) \rangle 
= 
\int_{(X/AN)^x} \chi_x \frac{\hat A((X/AN)^x) \ch([W_{AN}|_{\supp(\chi_x)}](x)) e^{c_1(L_{\det}|_{(X/AN)^x})}}{\det(1-x e^{-R^{\cN}/2\pi i})^{1/2}}.
\eeq
If $P$ is not a maximal cuspidal parabolic subgroup or  $x$ does not lie in a compact subgroup of $M$, then the left hand side equals zero.
\end{theorem}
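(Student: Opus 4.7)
The strategy, foreshadowed in the introduction, is a \textbf{parabolic reduction}: transfer the pairing from $G \curvearrowright X$ to the reductive group $M \curvearrowright X/AN$ and then invoke the fixed-point index formula of Hochs--Wang \cite{HW2}, which applies precisely because the cuspidality of $P$ provides $M$ with a compact Cartan. Since $AN$ has no nontrivial compact subgroups and acts properly on $X$, that action is free and $X/AN$ is a smooth manifold carrying a proper, cocompact $M$-action. The decomposition \eqref{eq SAN} then provides a natural $M$-equivariant $\Spinc$-structure on $X/AN$ with spinor bundle $S_{AN}$ and determinant line bundle $L_{\det}$; twisting by $W_{AN} = F_{AN}\otimes W/AN$ yields a twisted $\Spinc$-Dirac operator $D_{AN}$ whose $M$-equivariant index lies in $K_*(C^*_r(M))$.

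The heart of the proof is the \textbf{reduction identity}
\[
\langle \Phi^P_x,\, \ind_G(D) \rangle \;=\; \bigl\langle \tau^M_x,\, \ind_M(D_{AN}) \bigr\rangle,
\]
where $\tau^M_x$ is the ordinary orbital integral trace at $x$ on $\cC(M)$; that is, $\Phi^M_x$ for the (trivial) parabolic $M<M$ with $l=0$. To prove this I would represent $\ind_G(D)$ by a suitable idempotent (or unitary, according to parity) in a matrix algebra over $\cC(G)$, insert it into \eqref{eq def cocycle}, and successively integrate out the $K$-, $N$- and $A$-directions using $dg = dk\,dm\,da\,dn$ from \eqref{eq decomp dg}. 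The determinant factor built from the $\ka$-components $H(\cdot)$ in \eqref{eq def cocycle} is designed to function as a transgression/Chern--Simons form for the flat $A$-direction: combined with the cyclic cocycle identity and the $KN$-integration, it should collapse the $l$-fold convolution over $G^l$ onto an orbital integral over $M/Z$ of a kernel on $X/AN$ that represents $\ind_M(D_{AN})$. This step is the most direct real reductive analogue of the transgression arguments of Connes--Moscovici \cite{CM90} in the discrete-group case, and is the \textbf{main obstacle} of the proof, because one must simultaneously handle the noncompactness of $AN$, the matrix-valued nature of idempotents representing the index, and the convergence afforded only by the Harish-Chandra Schwartz structure.

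Once the reduction identity is established, the right-hand side of \eqref{eq index} follows by applying the fixed-point formula of \cite{HW2} to $\langle \tau^M_x, \ind_M(D_{AN}) \rangle$: the $\hat A$-class of $(X/AN)^x$, the Chern character of $[W_{AN}|_{\supp(\chi_x)}]$ evaluated at $x$, the factor $e^{c_1(L_{\det}|_{(X/AN)^x})}$, and the normal-bundle denominator $\det(1-xe^{-R^{\cN}/2\pi i})^{1/2}$ then arise from the standard $\Spinc$-version of the Atiyah--Segal--Singer localization applied on the $M$-manifold $X/AN$, with $\chi_x$ supplying the cut-off needed to integrate over the noncompact fixed-point set. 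The two vanishing statements follow from the same reduction: if $x$ lies outside every compact subgroup of $M$, then $\tau^M_x$ vanishes on $K_*(\cC(M))$ by \cite{HW2}, so the pairing on the right is zero; if $P$ is cuspidal but not maximal, then $\ka$ has excess dimension relative to the split part of a compact Cartan of $M$, and the induced $M$-equivariant index on $X/AN$ is forced to vanish for the corresponding representation-theoretic reason (only the part of the Plancherel decomposition of $G$ supported on series induced from maximal cuspidal parabolics contributes to $\Phi^P_x$).
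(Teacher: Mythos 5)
Your high-level strategy — transfer the pairing to an $M$-equivariant index on $X/AN$ and invoke the Hochs--Wang fixed-point formula — is indeed the strategy of the paper, and you correctly identify the reduction identity as the crux. However, the route you sketch has a genuine gap in precisely the part you flag as the main obstacle, and the gap is not merely a matter of estimates to be filled in.

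The paper does \emph{not} collapse the degree-$l$ cocycle $\Phi^P_x$ on $\cC(G)$ directly to the degree-$0$ orbital integral trace $\tau^M_x$ on $\cC(M)$ in one step, and the determinant factor $\det(H(\cdot),\ldots,H(\cdot))$ is not absorbed by a transgression. Instead, the reduction factors through the intermediate group $MA$ acting on $X/N$: one first integrates out only $K$ and $N$ (Proposition \ref{prop cocycle MA}, after conjugating the idempotent to have the $K\times K$-equivariance needed to kill the $K$-integrals via the trace property), obtaining the equality $\langle\Phi^P_x,\ind_G D\rangle = \langle\Phi^{MA}_x,\ind_{MA} D_{X_{MA}}\rangle$, where $\Phi^{MA}_x$ is still a degree-$l$ cocycle carrying the determinant factor. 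The drop from degree $l$ to degree $0$ is then not an integration but a factorization: $\Phi^{MA}_x = \Phi^M_x\cup\Phi^A_e$, matched against the exterior Kasparov-product decomposition $\ind_{MA}(D_{X_{MA}})=\ind_M(D_{X_M})\otimes\ind_A(D_A)$ (Lemmas \ref{lem decomp DMA}, \ref{lem pairing product psi}), and the normalization $\langle\Phi^A_e,\ind_A D_A\rangle=1$ is taken from Pflaum--Posthuma--Tang, not derived by a Chern--Simons argument inside $\Phi^P_x$. Without this two-step factorization (and without Lemma \ref{lem tilde phiMx} to identify the resulting degree-$l$ cocycle $\tilde\Phi^M_x$ with the degree-$0$ trace on idempotents) your transgression heuristic does not produce the claimed identity. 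Two further omissions: (a) for the $N$-integration you need the idempotent $q_t$ to live in $\cC(E)=\cC(G,\cL^1(L^2(E|_Y)))$, which is the content of Proposition \ref{prop fD2} and is a substantial harmonic-analytic step involving Plancherel/Arthur estimates, not a routine matter of choosing a representative; and (b) the vanishing for non-maximal cuspidal $P$ is not a Plancherel-support argument but simply that the virtual bundle $F_{AN}$, built from $S_{\kk/(\kk\cap\km)}$, is zero because the weight $0$ occurs among the $T\cap M$-weights of $\kk/(\kk\cap\km)$ (Lemma \ref{lem char Skm}), so the Chern-character factor in the fixed-point formula vanishes identically.
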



\begin{remark}\label{rem SAN unique}
The $M$-equivariant $\Spinc$-structure on $X/AN$ with spinor bundle $S_N$ satisfying \eqref{eq SAN} is not unique: one can tensor $S_{AN}$ by an $M$-equivariant line bundle and $F_{AN}$ by the dual line bundle. But the right hand side of \eqref{eq index} does not change under this modification, so \eqref{eq SAN} is enough to state Theorem \ref{thm index}. For the sake of definiteness, we will use the bundles $S_{AN}$ and $F_{AN}$ defined in \eqref{eq SAN FAN}.
\end{remark}

\subsection{A reformulation on $X/N$}

The fixed point formula in Theorem \ref{thm index} can be reformulated in terms of the manifold $X/N$ rather than $X/AN$. The manifold $X/N$ has a $\Spinc$-structure with spinor bundle $S_N$ 
such that
\beq{eq SN}
S/N = S_N \otimes F_N \to X/N
\eeq
for an $M$-equivariant vector bundle $F_N \to X/N$. 
See Subsection \ref{sec index XN} for an explicit construction, in particular \eqref{eq SN FN}. Analogously to Remark \ref{rem SAN unique}, any ambiguity in the choice of $S_N$ does not affect the index formula.
Let $L_{\det}^{N}\to X/N$\label{pag LdetN} be the determinant line bundle of this $\Spinc$-structure.
%
Consider the vector bundle\label{pag WN}
\[
W_{N} := F_N \otimes W/N \to X/N.
\]
For $x \in M$, 
let $\cN_{N} \to (X/N)^x$ be the normal bundle of $(X/N)^x$ in $X/N$.

We view $da$ as the volume form \eqref{eq vol A}. Consider the projection map  $q_A\colon X/N = X/AN \times A \to A$.\label{pag qA} Let $\chi_A \in C^{\infty}_c(A)$\label{pag chiA} be such that 
\[
\int_{A} \chi_A \, da = 1.
\]
Let $q\colon X/N \to X/AN$\label{pag q} be the quotient map, and set $\chi_{x, A} := q^*\chi_x \, q_A^*\chi_A$.
\begin{lemma}\label{lem index XN}
If $P$ is a maximal cuspidal parabolic subgroup and $x \in T$, then the right hand side of \eqref{eq index} equals
\beq{eq index XN}
\int_{(X/N)^x}\chi_{x, A} \frac{\hat A((X/N)^x) \ch([W_N|_{\supp(\chi_{x, A})}](x)) e^{c_1(L_{\det}^{N}|_{(X/N)^x})}}{\det(1-x e^{-R^{\cN_{N}}/2\pi i})^{1/2}}\wedge q_A^*da.
\eeq
\end{lemma}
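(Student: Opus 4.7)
The plan is to exploit the fact that $A\cong \R^l$ is contractible and acts freely on $X/N$, so the projection $q\colon X/N\to X/AN$ is a (trivializable) principal $A$-bundle; concretely we use the product decomposition $X/N = X/AN\times A$ implicit in the definitions of $q_A$ and $\chi_{x,A}$. Since $M$ commutes with $A$, the action of $x\in T\subset M$ on $X/N$ preserves $A$-orbits and, under the trivialization, is the product of its action on $X/AN$ with the trivial action on $A$. In particular $(X/N)^x = (X/AN)^x\times A$, and the restriction of $q$ to these fixed-point sets is a trivial principal $A$-bundle.

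Next I would compare the $\Spinc$-data. Because $S/N = q^*(S/AN)$ as $M$-equivariant Clifford modules on $X/N$, and the standard (flat) $\Spinc$-structure on the vector group $A$ has trivial spinor bundle with fiber $S_\ka$, the decompositions \eqref{eq SAN} and \eqref{eq SN} are compatible with the choice $S_N = q^*S_{AN}\otimes(\text{trivial bundle with fiber }S_\ka)$ and $F_N = q^*F_{AN}$. By the analogue of Remark \ref{rem SAN unique} for $S_N$, any other admissible choice yields the same integrand in \eqref{eq index XN}. This choice gives $L_{\det}^N = q^*L_{\det}$ and $W_N = q^*W_{AN}$. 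Likewise $T(X/N) = q^*T(X/AN)\oplus q_A^*TA$ with $TA$ flat, and the normal bundle satisfies $\cN_N = q^*\cN$.

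Step three is to identify each factor in the integrand of \eqref{eq index XN} as the $q$-pullback of the corresponding factor in \eqref{eq index}. By naturality, $\hat A((X/N)^x) = q^*\hat A((X/AN)^x)$ (using $\hat A(TA)=1$ since $TA$ is flat), $\ch([W_N|_{\supp(\chi_{x,A})}](x)) = q^*\ch([W_{AN}|_{\supp(\chi_x)}](x))$, $e^{c_1(L_{\det}^N|_{(X/N)^x})} = q^*e^{c_1(L_{\det}|_{(X/AN)^x})}$, and $\det(1-xe^{-R^{\cN_N}/2\pi i})^{1/2} = q^*\det(1-xe^{-R^\cN/2\pi i})^{1/2}$ (since $x$ acts trivially on the $A$-factor, so $TA$ contributes nothing to the normal-bundle denominator).

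Finally, $\chi_{x,A}\cdot q_A^*da = q^*\chi_x \cdot q_A^*(\chi_A\,da)$. Applying Fubini on $(X/N)^x=(X/AN)^x\times A$, the integral in \eqref{eq index XN} factors as
\[
\left(\int_{(X/AN)^x}\chi_x\cdot(\cdots)\right)\cdot\left(\int_A \chi_A\,da\right) = \int_{(X/AN)^x}\chi_x\cdot(\cdots),
\]
which is the right-hand side of \eqref{eq index}. The main subtlety is the identification of the $\Spinc$-structures and the verification that $L_{\det}^N = q^*L_{\det}$; this is handled by the explicit construction referenced in \eqref{eq SN FN}, together with the fact that $A$ is a flat vector group so its standard $\Spinc$-structure contributes trivially.
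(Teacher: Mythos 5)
Your proposal is correct and follows essentially the same route as the paper's proof: use the product decomposition $X/N = X/AN \times A$ (and hence $(X/N)^x = (X/AN)^x \times A$ since $x\in T\subset M$ centralises $A$), observe that all $\Spinc$-data, $\hat A$-class, $W_N$, $L_{\det}^N$, and normal bundle pull back via $q$ from $X/AN$ with the $A$-factor contributing trivially, and then integrate out the $A$-factor via Fubini using $\int_A\chi_A\,da=1$. The paper verifies $S_N=q^*S_{AN}\otimes S_\ka$ directly from the explicit constructions \eqref{eq SAN FAN} and \eqref{eq SN FN}, which corresponds to your invocation of the flat $\Spinc$-structure on $A$.
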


\begin{remark}
If $x \in T^{\reg}$, then $(X/N)^x \cong X^x$, and \eqref{eq index XN} can be rewritten as an integral over $X^x$. But it seems to be a nontrivial exercise to rewrite the integrand in terms of the manifold $X$ instead of $X/N$.
%
\end{remark}

\subsection{Idea of the proof and relation with parabolic induction}\label{sec diagram}

An intuitive outline of the proof of Theorem \ref{thm index} is given in Diagram \eqref{eq diagram proof}. 
The  ingredients of this diagram are described below.
\beq{eq diagram proof}
\xymatrix{
K_*^G(X) \ar[r]^-{\ind_G} \ar[d]_-{\Res^G_{MA}}& K_*(C^*_{r}(G))\ar[d]_-{\relbar \otimes_{C^*_r(G)} \Ind^G_P}
\ar@/_-1.5pc/[dr]^-{\langle \Phi_x^P, \relbar \rangle }
& \\
K_*^{MA}(X/N) \ar[r]^-{\ind_{MA}} & K_*(C^*_{r}(MA)) \ar[r]^-{\langle \Phi_x^{MA}, \relbar \rangle } & \C\\
K_*^M(X/AN) \ar[r]^-{\ind_M}  \ar[u]^-{\relbar \otimes [D_A]} & \ar[u]^-{\relbar \otimes \ind_A(D_A)} K_*(C^*_{r}(M))
\ar@/_0pc/[ur]_-{\langle \Phi_x^{M}, \relbar \rangle } & 
}
\eeq

\begin{enumerate}
\item
The map
\beq{eq Res G MA}
\relbar \otimes_{C^*_r(G)} \Ind^G_P\colon 
K_*(C^*_r(G)) \to K_*(C^*_r(MA))
\eeq
is defined by the tensor product over $C^*_r(G)$ from the right by a Hilbert $C^*_r(G)$-$C^*_r(MA)$ bimodule $\Ind^G_P$ defined in  \cite{Clare13,CCH16}. This module is closely related to parabolic induction, see Corollary 1 in \cite{Clare13} and Definition 4.4 in \cite{CCH16}.
%
 It is an interesting but seemingly nontrivial question  if the top right part of Diagram \eqref{eq diagram proof} commutes.
\item
There is a natural way to define a map $\Res^G_{MA}\colon K_*^G(X) \to K_*^{MA}(X/N)$, by first restricting group actions from $G$ to $P$ and then applying a construction from Section 3.2 of Valette's part of  \cite{MV03} or Appendix A in \cite{Landsman08}. One could expect the top left part of \eqref{eq diagram proof} to commute, based on results in \cite{Landsman08,MSW19,MV03}.
\item Let $D_A$ be the $\Spin$-Dirac operator on $A \cong \R^l$. The bottom left part of Diagram \eqref{eq diagram proof} is defined in terms of the exterior Kasparov product by the $K$-homology and index classes of $D_A$. This commutes by basic properties of the index; see also Lemma \ref{lem decomp DMA}.
\item The group $M$ has a compact Cartan subgroup. Hence its maximal cuspidal parabolic subgroup is $M$ itself. Similarly, $MA$ is its own maximal cuspidal parabolic subgroup. Let $\Phi^M_x$ and $\Phi^{MA}_x$ be the corresponding versions of the cocycle \eqref{eq def cocycle}. The maps on the right hand side of \eqref{eq diagram proof} are defined by pairing with the respective cocycles. The bottom right part of Diagram \eqref{eq diagram proof} commutes by Lemmas \ref{lem pairing product psi} and \ref{lem phi A DA}. 
\end{enumerate}
The open questions in points 1 and 2 are topics of ongoing research. Positive answers would make a proof of Theorem \ref{thm index} possible based on Diagram \eqref{eq diagram proof}. Indeed, using commutativity of \eqref{eq diagram proof}, one shows that the left hand side of \eqref{eq index} equals the pairing of $\Phi^M_x$ with the $M$-equivariant index of a certain Dirac operator on $X/AN$. The latter pairing can be computed via results from \cite{HW2}, because $M$ has a compact Cartan subgroup.

An indication that the questions in points 1 and 2 are nontrivial is that it seems necessary for the diagram to commute (indeed, even for some of its components to be well-defined) that $P$ is a parabolic subgroup, not just any cocompact subgroup.
We give a different and more direct  proof of Theorem \ref{thm index} here, but in spirit it is analogous to Diagram \ref{eq diagram proof}. We have included this diagram to illustrate the idea of the proof, and its possible relation to the parabolic induction map from \cite{Clare13, CCH16}.

\section{The index pairing on $X$}

The most important step in the proof of Theorem \ref{thm index} is the fact that the left hand side of \eqref{eq index} equals an index pairing on $X/N$. This is Proposition \ref{prop ind G MA}, which is analogous to commutativity of the top two diagrams in \eqref{eq diagram proof}. Proving that proposition is our main goal in this section and the next.

In this section, we prepare for the proof of Proposition \ref{prop ind G MA} by obtaining a relation between cocycles defined on $G$ and on $MA$, and giving an explicit representative of $\ind_G(D)$. These two ingredients lead to an expression for the left hand side of \eqref{eq index}, Proposition \ref{prop ind X MA}. That proposition is a version of the commutativity of the top-right part of Diagram \ref{eq diagram proof}.

\subsection{Cocycles on $G$ and $MA$}

\begin{lemma} \label{lem cocycle}
For all $f_0, \ldots, f_l \in \cC(G)$,
\begin{multline} \label{eq cocycle}
\Phi^P_x(f_0, \ldots, f_l) = \int_{M/Z} \int_{K^{l+1}} \int_{N^{l+1}} \int_{(MA)^l} 
\det (a_1, \ldots, a_l)\\
f_0(k_0 n_0 hxh^{-1} m_1^{-1} a_1^{-1} k_1^{-1} ) 
f_1(k_1 n_1 m_1 m_2^{-1} a_1 a_2^{-1} k_2^{-1}) \cdots \\
\cdots
f_{l-1}(k_{l-1} n_{l-1} m_{l-1} m_l^{-1} a_{l-1} a_l^{-1} k_l^{-1})
f_l(k_l n_l m_l a_l k_0^{-1})
\\
dm_1 \cdots dm_l\, 
da_1 \cdots da_l\, 
dn_0 \cdots dn_{l}\,
dk_0 \cdots dk_l\, 
 d(hZ).
\end{multline}
\end{lemma}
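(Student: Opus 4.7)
The plan is to derive \eqref{eq cocycle} from \eqref{eq def cocycle} through two successive changes of variables, exploiting the structure $G = KMAN$ together with the facts that $M$ and $A$ commute and that $MA$ normalises $N$. First, apply the telescoping substitution $\tilde g_i := g_i g_{i+1} \cdots g_l k$ for $i = 1, \ldots, l$, with $\tilde g_{l+1} := k$. Iterating from $i = l$ down to $i = 1$ and using right-invariance of the Haar measure on $G$, this is measure-preserving. Under it, $g_i$ becomes $\tilde g_i \tilde g_{i+1}^{-1}$, so the determinant entries $H(g_i \cdots g_l k)$ become simply $H(\tilde g_i)$, the argument of $f_0$ becomes $k h x h^{-1} n\, \tilde g_1^{-1}$, and $f_l$'s argument becomes $\tilde g_l k^{-1}$.

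Second, decompose each $\tilde g_i$ via $G = KMAN$ as $\tilde g_i = k_i m_i a_i n_i^0$, with $d\tilde g_i = dk_i\, dm_i\, da_i\, dn_i^0$ and $H(\tilde g_i) = \log a_i$. After identifying $\log a_i$ with its coordinate vector in $\ka$ via the basis fixed in \eqref{eq vol A}, the determinant takes the required form $\det(a_1, \ldots, a_l)$. Rename the outer $k$ and $n$ as $k_0$ and $n_0$. To put the arguments of the $f_i$'s into the form in \eqref{eq cocycle}, rewrite
\[
m_i a_i n_i^0 = \bigl((m_i a_i) n_i^0 (m_i a_i)^{-1}\bigr) m_i a_i = n_i m_i a_i,
\]
which defines $n_i := (m_i a_i) n_i^0 (m_i a_i)^{-1} \in N$ (valid since $MA$ normalises $N$). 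Then $\tilde g_i = k_i n_i m_i a_i$, and using that $M$ and $A$ commute,
\[
\tilde g_i \tilde g_{i+1}^{-1} = k_i n_i m_i m_{i+1}^{-1} a_i a_{i+1}^{-1} n_{i+1}^{-1} k_{i+1}^{-1}.
\]
A final cascade of translations in the $N$-variables, performed in order $i = l-1, \ldots, 0$, absorbs each stray $n_{i+1}^{-1}$ into $n_i$; since $N$ is unimodular these translations preserve $dn_i$. The analogous manipulation for $f_0$'s argument uses that $hxh^{-1} \in M$ also normalises $N$, so that $n_0$ may be moved past $hxh^{-1}$.

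The main technical difficulty is bookkeeping the Jacobian factors $\lvert\det(\Ad(m_i a_i)|_{\kn})\rvert$ arising from the conjugations $n_i^0 \mapsto n_i$ when switching parametrisations from $KMAN$ to $KNMA$; these must be shown to cancel across the $l$ substitutions, or else to be absorbed into a compatible convention for $dn$, in order to recover the clean form \eqref{eq cocycle}. A streamlined alternative is to work from the outset in the $G = KNMA$ parametrisation with Haar measure $dg = dk\, dn\, dm\, da$ (related to the $KMAN$ convention by the modular function of $MA$ on $N$), in which each $n_i$-substitution is already a pure translation.
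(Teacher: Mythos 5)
Your proposal follows essentially the same route as the paper's proof: telescope the $g_i$'s so the determinant entries become $H(\tilde g_i)$, decompose each $\tilde g_i$ in $G = KMAN$, and then rearrange using that $MA$ normalises $N$ and that $M$ and $A$ commute, finishing with a translation in the $K$-variables. The only organisational difference is that you merge the paper's two preliminary substitutions (replacing $g_j$ by $k g_j k^{-1}$ and then telescoping) into the single change of variables $\tilde g_i := g_i \cdots g_l k$, which is equivalent and a bit cleaner. (You also correctly write that $MA$ normalises $N$; the paper's phrase ``$N$ normalises $MA$'' is a slip.)

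Your closing paragraph puts its finger on a point the paper's proof leaves completely silent. The passage from $KMAN$-order to $KNMA$-order via $n_i^0 \mapsto n_i := \Ad(m_i a_i)(n_i^0)$ has Jacobian $|\det \Ad(m_i a_i)|_{\kn}|$, and while the $m_i$-contribution is trivial (a Levi $M$ has no nontrivial positive real characters), the $a_i$-contribution is $e^{2\rho_P(\log a_i)}$ with $\rho_P$ the half-sum of $\ka$-roots in $\kn$, which is not $1$. The paper's own ``substitutions in the integrals over $N$'' (effectively $\nu_j := \Ad(m_j a_j)(n_j n_{j+1}^{-1})$, taking its step-2 form to its step-3 form) carry exactly this Jacobian $\prod_i e^{2\rho_P(\log a_i)}$, and nothing is said about it; the factors do not visibly telescope or cancel. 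Your ``streamlined alternative'' of switching to the $KNMA$ parametrisation does not by itself circumvent this: with $dk, dm, da, dn, dg$ fixed by \eqref{eq decomp dg}, the $KNMA$ decomposition carries the density $e^{-2\rho_P(\log a)}$ relative to $dk\,dn\,dm\,da$, so the same factor reappears. So your proposal faithfully reproduces the paper's argument, and the concern you raise is genuine: neither your write-up nor the paper's shows why no $\prod_i e^{\pm 2\rho_P(\log a_i)}$ survives. To close Lemma~\ref{lem cocycle} one should either exhibit the cancellation explicitly or cross-check the measure normalisation in \eqref{eq decomp dg} and \eqref{eq def cocycle} against the original definition of $\Phi^P_x$ in [ST19].
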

\begin{proof}
First substituting $k^{-1}g_j k$ for $g_j$ in the right hand side of \eqref{eq def cocycle}, and then $g_j \cdots g_l$ for $g_j$, 
 we obtain
\begin{multline*}
\Phi^P_x(f_0, \ldots, f_l) = \int_{M/Z} \int_{KN} \int_{G^l} 
\det \bigl(H(g_1),  \ldots, H(g_l )\bigr)\\
f_0(khxh^{-1}ng_1^{-1}k^{-1} ) f_1(kg_1g_2^{-1}k^{-1}) \cdots
f_{l-1}(k g_{l-1} g_l^{-1}k^{-1})
 f_l(kg_lk^{-1})\\
\, dg_1 \cdots dg_n\, dk\, dn\, d(hZ).
\end{multline*}
Using \eqref{eq decomp dg} and the definition of the map $H$, we rewrite the right hand side as
\begin{multline*}
 \int_{M/Z} \int_{KN} \int_{(KMAN)^l} 
\det (a_1,  \ldots,  a_l)\\
f_0(khxh^{-1}n n_1^{-1} a_1^{-1} m_1^{-1} k_1^{-1} k^{-1} )
 f_1(k
 k_1 m_1 a_1 n_1 n_2^{-1} a_2^{-1} m_2^{-1} k_2^{-1}
 k^{-1}) \cdots \\
 \cdots
 f_{l-1}(k
 k_{l-1} m_{l-1} a_{l-1} n_{l-1} n_l^{-1} a_l^{-1} m_l^{-1} k_l^{-1}
 k^{-1})
 f_l(kk_l m_l a_l n_l k^{-1})\\
 dm_1 \cdots dm_l\, 
da_1 \cdots da_l\, 
dn\, dn_1 \cdots dn_{l}\,
dk\, dk_1 \cdots dk_l\, 
 d(hZ).
\end{multline*}
The facts that $N$ normalises $MA$  and that $M$ and $A$ commute, together with substitutions in the integrals over $N$, imply that the above expression equals
\begin{multline*}
\int_{M/Z} \int_{K^{l+1}} \int_{N^{l+1}} \int_{(MA)^l} 
\det(a_1, \ldots, a_l)\\
f_0(k n_0 hxh^{-1} m_1^{-1} a_1^{-1} k_1^{-1}k^{-1} ) 
f_1(k k_1 n_1 m_1 m_2^{-1} a_1 a_2^{-1} k_2^{-1} k^{-1}) \cdots \\
\cdots
f_{l-1}(k k_{l-1} n_{l-1} m_{l-1} m_l^{-1} a_{l-1} a_l^{-1} k_l^{-1}k^{-1})
f_l(k k_l n_l m_l a_l  k^{-1})
\\
dm_1 \cdots dm_l\, 
da_1 \cdots da_l\, 
dn_0 \cdots dn_{l}\,
dk_0 \cdots dk_l\, 
 d(hZ).
\end{multline*}
A substitution in the integrals over $K$ now gives the desired equality \eqref{eq cocycle}.
\end{proof}

If $f \in \cC(G)$, define the function $f^N$\label{pag fN} on $MA$ by
\beq{eq fN}
f^N(ma) := \int_N f(nma)\, dn.
\eeq
It was shown in Lemma 21 in \cite{HC66} that this integral converges for all $m \in M$ and $a \in A$, and that this defines a Schwartz  function $f^N \in \cC(MA)$. 

Let $\cH$ be a Hilbert space. We write $\cL^1(\cH)$ for the algebra of trace-class operators on $\cH$.
Define seminorms on the space of smooth maps from $G$ to $\cL^1(\cH)$ by taking the seminorms on $\cC(G)$ and replacing absolute values of functions by the trace norm.
Let $\cC(G, \cL^1(\cH))$ be the Fr\'echet space of all such maps for which these seminorms are finite.
%
%
For a  cyclic $l$-cocycle $\varphi$ on $\cC(G)$, let $\varphi \# \Tr\colon \bigl((\cC(G, \cL^1(\cH)) \bigr)^{l+1} \to \C$ be as in Theorem III.1.$\alpha$.12 in \cite{ConnesBook}, initially defined on the dense subspace $\cC(G) \otimes \cL^1(\cH)$ and extended continuously.
%
Then the pairing of $\varphi$ with an idempotent $q$ in a matrix algebra over $\cC(G, \cL^1(\cH))$, representing an element in $K_0\bigl(\cC(G,  \cL^1(\cH))\bigr)$, 
is given by
\beq{eq pairing Tr}
\langle \varphi, q\rangle = (\varphi \# \Tr)(q, \ldots, q).
\eeq
Here $\Tr$ denotes the tensor product of the operator trace on $\cL^1(\cH)$ and the matrix trace.
\begin{proposition} \label{prop cocycle MA}
Let $\cH$ be a Hilbert space equipped with a representation of $K$. For $j=0, \ldots, l$, let 
$f_j \in \cC(G, \cL^1(\cH))$ be such that for all $g \in G$ and $k,k' \in K$,
\beq{eq fj KK}
f_j(kgk') = k \circ f_j(g) \circ k' \quad \in \cL^1(\cH).
\eeq
Let $f_j^N$ be defined via the extension of \eqref{eq fN} to a map $\cC(G, \cL^1(\cH)) \to \cC(MA,
\cL^1(\cH))$. Then
\[
(\Phi^P_x \# \Tr)(f_0, \ldots, f_l) 
 = (\Phi^{MA}_x \# \Tr)(f_0^N, \ldots, f_l^N).
\]
\end{proposition}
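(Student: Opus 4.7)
The plan is to extend the formula of Lemma \ref{lem cocycle} to operator-valued Schwartz functions, carry out the inner $N$-integrations on the left-hand side, and match the result with the analogous expansion of the right-hand side on $MA$.

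I first note that, by the bilinear definition of $\varphi \# \Tr$ on pure tensors $a \otimes T$ and by Fr\'echet continuity in the trace-norm seminorms, $(\Phi^P_x \# \Tr)(f_0, \ldots, f_l)$ will be given by the same iterated integral over $M/Z \times K^{l+1} \times N^{l+1} \times (MA)^l$ as in Lemma \ref{lem cocycle}, with the pointwise product of scalars replaced by the operator trace of the product. The $j$-th factor takes the form $f_j(k_j n_j x_j k_{j+1}^{-1})$ with $x_j \in MA$ (indices cyclic modulo $l+1$), so the hypothesis \eqref{eq fj KK} will let me rewrite it as $k_j \circ f_j(n_j x_j) \circ k_{j+1}^{-1}$; integrating over $n_j$ will then collapse $\int_N f_j(n_j x_j)\, dn_j$ to $f_j^N(x_j)$ by \eqref{eq fN}. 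I will next invoke cyclicity of the trace to cancel all of the adjacent pairs $k_{j+1}^{-1} k_{j+1}$ between consecutive factors together with the wrap-around pair $k_0, k_0^{-1}$, after which the $K^{l+1}$-integration decouples entirely from the integrand.

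On the right-hand side, I will apply Lemma \ref{lem cocycle} to the group $MA$---whose maximal cuspidal parabolic is $MA$ itself, with trivial $N$-component and outer compact group $K_M := K \cap M$---yielding a parallel expansion with no inner $N$-integration. For the cyclicity collapse to go through there, I will need $f_j^N$ to be $K_M$-bi-equivariant in the sense of \eqref{eq fj KK}; I plan to verify this by substituting $n \mapsto knk^{-1}$ in \eqref{eq fN}, using that $K_M \subset M$ normalises $N$ and that conjugation by a compact element preserves $dn$, and then applying \eqref{eq fj KK} directly. Both expansions will then reduce to the same integral over $M/Z \times (MA)^l$, with matching residual compact volume factors under the normalisations implicit in $dg = dk\, dm\, da\, dn$. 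The main obstacle will be the Fubini-style interchange of the $N$-integral with the operator trace and with the outer integrations; I will justify this via the continuity of $f \mapsto f^N\colon \cC(G, \cL^1(\cH)) \to \cC(MA, \cL^1(\cH))$, which is the trace-norm analogue of Harish-Chandra's Lemma 21 in \cite{HC66} and underlies the very definition of $f^N$.
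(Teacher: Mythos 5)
Your proposal is correct and follows essentially the same strategy as the paper: apply Lemma~\ref{lem cocycle} to both sides, use the $K$-bi-equivariance hypothesis \eqref{eq fj KK} to pull the $k_j$'s out of the arguments of $f_j$ on the $G$-side, integrate over $N$ to produce $f_j^N$, and cancel the remaining $k_0\,(\cdots)\,k_0^{-1}$ conjugation by cyclicity of the trace, so that both sides collapse to the same integral over $M/Z \times (MA)^l$.

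The one place where you diverge from the paper is the elimination of the $(K\cap M)^{l+1}$-integral on the $MA$-side. The paper reduces the $MA$-expansion to \eqref{eq cocycle 2} for \emph{arbitrary} $f_j^{MA} \in \cC(MA,\cL^1(\cH))$, by explicit substitutions in the $M/Z$ and $(MA)^l$ variables (using that the $k_j$ lie in $M$ and that $M$ and $A$ commute); no equivariance of $f_j^{MA}$ is needed there. You instead first verify that $f_j^N$ inherits $(K\cap M)$-bi-equivariance from the $K$-bi-equivariance of $f_j$ --- via the substitution $n \mapsto knk^{-1}$ in \eqref{eq fN}, which is legitimate because $K\cap M$ normalises $N$ and $|\det(\Ad(k)|_{\kn})| = 1$ for $k\in K\cap M$ --- and then run the same cyclicity argument on the $MA$-side as on the $G$-side. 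Both routes are valid. Yours is a touch less general (it uses that $f_j^N$ is equivariant, which the paper's substitution argument does not need), but it is arguably cleaner for this proposition since it avoids spelling out the case-by-case substitutions and makes the two sides of the identity structurally symmetric. Your closing remark on continuity of $f\mapsto f^N$ as the justification for interchanging the $N$-integral with the trace and outer integrals is the right ingredient; the paper leaves this implicit.
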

\begin{proof}
Viewing $MA$ as a parabolic subgroup of itself, and applying
Lemma \ref{lem cocycle}, we find that
 that for all $f_0^{MA},\ldots, f_l^{MA} \in \cC(MA,  \cL^1(\cH))$,
\begin{multline*} 
(\Phi^{MA}_x \# \Tr)(f_0^{MA}, \ldots, f_l^{MA}) = \int_{M/Z} \int_{(K\cap M)^{l+1}}  \int_{(MA)^l} 
\det (a_1, \ldots, a_l)\\
\Tr\Bigl(
f_0^{MA}(k_0  hxh^{-1} m_1^{-1} a_1^{-1} k_1^{-1} ) 
f_1^{MA}(k_1  m_1 m_2^{-1} a_1 a_2^{-1} k_2^{-1}) \cdots \\
\cdots
f_{l-1}^{MA}(k_{l-1}  m_{l-1} m_l^{-1} a_{l-1} a_l^{-1} k_l^{-1})
f_l^{MA}(k_l  m_l a_l k_0^{-1}) \Bigr)
\\
dm_1 \cdots dm_l\, 
da_1 \cdots da_l\, 
dk_0 \cdots dk_l\, 
 d(hZ). 
\end{multline*}
Using the facts that $k_j \in M$ and that $M$ and $A$ commute, we can use substitutions to rewrite the right hand side as
\begin{multline} \label{eq cocycle 2}
 \int_{M/Z} \int_{(MA)^l} 
\det (a_1, \ldots, a_l) 
\Tr\Bigl(
f_0^{MA}( hxh^{-1} m_1^{-1} a_1^{-1} ) 
f_1^{MA}( m_1 m_2^{-1} a_1 a_2^{-1} ) \cdots 
\\
\cdots
f_{l-1}^{MA}(m_{l-1} m_l^{-1} a_{l-1} a_l^{-1} )
f_l^{MA}(m_l a_l) \Bigr)
dm_1 \cdots dm_l\, 
da_1 \cdots da_l\, 
 d(hZ). 
\end{multline}
Explicitly, if $l \geq 4$, then one substitutes $k_0 h$ for $h$;  $k_1 m_1 k_0^{-1}$ for $m_1$; $k_2 m_2 k_0^{-1}$ for $m_2$; $k_{l-1} m_{l-1} k_0^{-1}$ for $m_{l-1}$;  $k_l m_l k_0^{-1}$ for $m_l$; and $k_j m_j$ for $m_j$ if $3\leq j \leq l-2$.

Now if $f_0, \ldots, f_l \in \cC(G, \cL^1(\cH))$ satisfy \eqref{eq fj KK}, then Lemma \ref{lem cocycle} implies that 
\begin{multline*} 
(\Phi^P_x \# \Tr)(f_0, \ldots, f_l) =\\
 \int_{M/Z} \int_{K} \int_{(MA)^l} 
\det (a_1, \ldots, a_l)
\Tr\Bigl(
k_0f_0^N( hxh^{-1} m_1^{-1} a_1^{-1} ) 
f_1^N(m_1 m_2^{-1} a_1 a_2^{-1} ) \cdots \\
\cdots
f_{l-1}^N( m_{l-1} m_l^{-1} a_{l-1} a_l^{-1} )
f_l^N(m_l a_l) k_0^{-1}\Bigr)
dm_1 \cdots dm_l\, 
da_1 \cdots da_l\, 
dk_0\, 
 d(hZ).
\end{multline*}
By the trace property of the operator trace, this equals
 the right hand side of \eqref{eq cocycle 2}, with $f_j^{MA}$ replaced by $f_j^N$.
\end{proof}


\subsection{A slice}

Using Abels' slice theorem \cite{Abels}, we write
\beq{eq Abels}
X = G \times_K Y,
\eeq
for a $K$-invariant, compact submanifold $Y \subset X$.\label{pag Y} 
 The index \eqref{eq indG} is independent of the $G$-invariant Riemannian metric on $X$. From now on, we choose such a metric induced by a $K$-invariant Riemannian metric on $Y$ and a $K$-invariant inner product on $\kp$ via
\[
TX \cong G \times_K (\kp \oplus TY).
\]
We will write $dy$ (and $dy'$ and $dy_j$) for the Riemannian density on $Y$. Then we can and will normalise the Haar measure $dg$ on $G$ so that for all $f \in C_c(X)$, 
\[
\int_X f(p)\, d\vol_p = \int_G \int_Y f(gy)\, dy\, dg,
\]
where $d\vol$ is the Riemannian density on $X$. See Lemma 4.1 in \cite{HSIII}.

Suppose that the adjoint representation $\Ad\colon K \to \SO(\kp)$ lifts to a homomorphism 
\beq{eq tilde Ad}
\widetilde{\Ad}\colon K \to \Spin(\kp). 
\eeq
This is not a restriction, since this is always true for a double cover $\tilde G$ of $G$, and one can then use the fact that $\tilde G \times_{\tilde K} Y =  G \times_{K} Y  = X$, for a maximal compact subgroup $\tilde K<\tilde G$. Let $S_{\kp}$\label{pag Sp} be the $\Spin$-representation of $\Spin(\kp)$, viewed as a representation of $K$ via $\widetilde{\Ad}$.
 The slice $Y$ in \eqref{eq Abels} has a $\Spinc$-structure with spinor bundle $S_Y \to Y$\label{pag SY} such that
\beq{eq S Y}
S = G \times_K(S_{\kp} \otimes S_{Y}).
\eeq
See Proposition 3.10 in \cite{HM14}. 

Corresponding to \eqref{eq S Y}, we have the decomposition
\beq{eq decomp L2E}
L^2(E) = \bigl(L^2(G) \otimes S_{\kp} \otimes L^2(S_Y \otimes W|_Y) \bigr)^K.
\eeq
 Let $L$ be the left regular representation of $G$ on $C^{\infty}(G)$, and let $c$ be the Clifford action by $\kp$ on $S_{\kp}$.
Let $\{X_1, \ldots, X_r\}$ be an orthonormal basis of $\kp$ with respect to the Killing form.
Consider the operator\label{pag DGK}
\[
D_{G,K} := \sum_{j} L(X_j) \otimes c(X_j) 
\]
on
$
C^{\infty}(G) \otimes S_{\kp}.
$
We have
\beq{eq DGK DY}
D = D_{G,K} \otimes 1 + 1 \otimes D_Y
\eeq
on 
\[
\Gamma^{\infty}(E) \cong \bigl( C^{\infty}(G) \otimes S_{\kp} \otimes \Gamma^{\infty}(S_Y \otimes W|_Y) \bigr)^K,
\]
for a $\Spinc$-Dirac operator $D_Y$\label{pag DY} on $S_Y$, coupled to $W|_Y$. Here we use a graded tensor product, which means that $1 \otimes D_Y$ means the grading operator on $S_{\kp}$ tensored with $D_Y$. 

Consider the vector bundle $\Hom(E|_Y) := E|_Y \boxtimes E|_Y^* \to Y \times Y$. Consider the action by $K \times K$ on 
$\cC(G) \otimes \Gamma^{\infty}(\Hom(E|_Y))$ defined by
\beq{eq SE KK}
\bigl(
(k,k')\cdot (f \otimes A)\bigr)(g,y,y') = f(kgk'^{-1}) k^{-1} A(ky, k'y') k',
\eeq
for $k,k' \in K$, $f \in \cC(G)$, $A \in \Gamma^{\infty}(\Hom(E|_Y))$, $g \in G$ and $y,y' \in Y$. Let $\cC_0(E) \subset\cC(G) \otimes \Gamma^{\infty}(\Hom(E|_Y))$ be the space of elements invariant under this action. An element $\tilde \kappa \in \cC_0(E)$ defines a smooth section $\kappa$ of $\Hom(E) := E \boxtimes E^* \to X \times X$ given by
\beq{eq kappa kappa tilde}
\kappa(gy, g'y') = g \tilde \kappa(g^{-1}g', y, y')g'^{-1},
\eeq
for $g,g' \in G$ and $y,y' \in Y$. 

We define $\cC(E) := \cC\bigl(G, \cL^1(L^2(E|_Y))\bigr)$ as above Proposition \ref{prop cocycle MA}.
%
We identify elements of $\cC(E)$ with the $G$-equivariant operators they define on $L^2(E)$ via \eqref{eq kappa kappa tilde}.

\subsection{Functional calculus and operators in $\cC(E)$}

We will use the fact that the $K$-theory class $\ind_G(D)$ can be represented by idempotents that lie in the unitisation of $\cC(E)$, see Lemma \ref{lem hk SE}. That allows us to apply Proposition \ref{prop cocycle MA}, see the proof of Proposition \ref{prop ind X MA}. These arguments are based on the following fact.
\begin{proposition} \label{prop fD2}
Let  $f$ be a  Schwartz function on $\R$, and assume that for all $n \in \Z_{\geq 0}$, there are $a,b>0$ such that for all $x>0$, the $n$th derivative of $f$ at $x$ satisfies 
\beq{eq bound fn}
|f^{(n)}(x)| \leq ae^{-bx}.
\eeq
Then the operators $f(D^2)$  and $Df(D^2)$ defined by functional calculus lie in $\cC(E)$. 
\end{proposition}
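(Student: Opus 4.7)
The strategy is to reduce $f(D^2)$ to a superposition of heat semigroup operators $e^{-tD^2}$ via a Laplace-type inversion, and to verify that each heat operator belongs to $\cC(E)$ with seminorms that can be integrated against the Laplace weight. The hypothesis $|f^{(n)}(x)|\leq a e^{-bx}$ (for each $n$, with its own $a,b$) is exactly what one needs to write, for $x \geq 0$,
\[
f(x) = \int_0^\infty e^{-tx}\, \nu(t)\, dt,
\]
where $\nu$ is smooth on $(0,\infty)$ and decays faster than any power at both $0$ and $\infty$ (Widder-type inverse Laplace; the decay of $\nu$ is obtained by applying the same bound to successive derivatives $f^{(n)}$ and integrating by parts). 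Since the spectrum of $D^2$ lies in $[0,\infty)$, spectral calculus gives
\[
f(D^2) = \int_0^\infty e^{-tD^2}\,\nu(t)\, dt, \qquad D f(D^2) = \int_0^\infty D e^{-tD^2}\,\nu(t)\, dt,
\]
provided these integrals converge in the Fr\'echet topology of $\cC(E)$.

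The main step is to prove that $e^{-tD^2} \in \cC(E)$ for each $t > 0$, with seminorms polynomial in $1/t$ as $t \to 0^+$ and exponentially small as $t \to \infty$. Because $X = G \times_K Y$ is a complete Riemannian manifold, the Dirac-Laplacian heat kernel $K_t$ on $X$ satisfies standard pointwise Gaussian bounds of Cheeger--Gromov--Taylor / Donnelly--Li type,
\[
|\nabla^\alpha K_t(p,p')| \leq C_{t,\alpha}\, e^{-d_X(p,p')^2/(5t)},
\]
with $C_{t,\alpha}$ polynomial in $t^{-1}$. Lifting $K_t$ to the slice via \eqref{eq kappa kappa tilde} produces a smooth map $\tilde K_t\colon G \to \cL^1(L^2(E|_Y))$; the trace-class property follows from compactness of $Y$. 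The Riemannian distance on $X$ dominates the distance $d$ on $G/K$ (up to the diameter of $Y$), so $\tilde K_t(g)$ has Gaussian decay in $d(g)$. Combined with Harish-Chandra's estimate $\Xi(g)^{-1} \leq C_1(1+d(g))^{D_1} e^{\rho\, d(g)}$, the Gaussian factor beats the exponential in $\Xi^{-1}$, so all seminorms \eqref{eq seminorms CG} of $\tilde K_t$ (with absolute value replaced by trace norm) are finite. Left- and right-invariant differentiation on $G$ translates into additional spatial derivatives of $K_t$ that satisfy the same Gaussian bounds.

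Integration in $t$ then places $f(D^2)$ in $\cC(E)$: the seminorms of $e^{-tD^2}$ are polynomial in $1/t$ near $0$ and decay exponentially as $t \to \infty$, both dominated by the rapid decay of $\nu$. For $Df(D^2)$ one uses the companion bound $|\nabla^\alpha(DK_t)(p,p')| \leq C'_{t,\alpha}\, e^{-d_X(p,p')^2/(5t)}$ (with an extra factor $t^{-1/2}$ in $C'_{t,\alpha}$ coming from $D e^{-tD^2}$ via the identity $\partial_t e^{-tD^2} = -D^2 e^{-tD^2}$ and interpolation), which is still integrable against $\nu$ near $0$.

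The principal obstacle is the passage from Riemannian Gaussian decay in $d_X$ on $X$ to the Harish-Chandra Schwartz condition in terms of $\Xi$ on $G$: one has to track the constants in the heat-kernel estimates uniformly in $t$ and combine them with the lower bound on $\Xi$ so that the resulting bound is integrable against $\nu(t)$ on $(0,\infty)$. Once this quantitative control is in place, commuting the invariant derivatives on $G$ past the integral and identifying them with further bounded spatial derivatives of $K_t$ is routine, and the same template handles $Df(D^2)$.
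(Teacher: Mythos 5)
Your approach is genuinely different from the paper's: you work on the position side (heat-kernel Gaussian estimates on $X$ combined with Harish-Chandra's lower bound for $\Xi$), whereas the paper works entirely on the Plancherel side, writing $f(D^2)=\int_\R e^{isD^2}\hat f(s)\,ds$, decomposing $D^2 = D_{G,K}^2\otimes 1 + 1\otimes D_Y^2$, computing the action of $D_{G,K}^2$ on each $\Ind_P^G(\sigma\otimes\nu\otimes 1)$ explicitly, and then invoking Arthur's characterisation of $\cC(G)$ as Schwartz functions on $\hat G_{\temp}$. Unfortunately your version has genuine gaps.

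The first and main gap is the claimed Laplace-type inversion. The hypothesis $|f^{(n)}(x)|\le a_n e^{-b_n x}$ for $x>0$ does \emph{not} imply a representation $f(x)=\int_0^\infty e^{-tx}\nu(t)\,dt$ with $\nu$ smooth and rapidly decaying on $(0,\infty)$. Already for the function actually used in Lemma~\ref{lem hk SE} -- a Schwartz extension of $e^{-cx}$, $x>0$ -- the unique $\nu$ is $\delta_c$, not a smooth function. Worse, for a Schwartz extension of $e^{-x}\cos x$ (which satisfies all the hypotheses with $a_n,b_n$ readily computable), no such $\nu$ exists at all: the candidate Laplace transform $e^{-s}\cos s=\tfrac12(e^{(-1+i)s}+e^{(-1-i)s})$ is unbounded on every vertical line $\Real s = c>0$, so it is not the Laplace transform of any tempered distribution on $(0,\infty)$. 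The exponential bounds on $f^{(n)}$ give analytic continuation of $\hat f$ into a horizontal strip, which is far from the half-plane analyticity and vertical-line bounds needed for a Bromwich/Widder representation. The paper sidesteps this entirely by using ordinary Fourier inversion $f(D^2)=\int_\R e^{isD^2}\hat f(s)\,ds$, which holds for every Schwartz $f$, and then does all the decay analysis on $\hat G_{\temp}$.

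Second, even if you could reduce to heat operators, your quantitative control as $t\to\infty$ is wrong. The Cheeger--Gromov--Taylor / Donnelly--Li bound $|K_t(p,p')|\le C_t\,e^{-d_X(p,p')^2/5t}$ with $C_t$ only \emph{polynomial} in $t^{-1}$ becomes trivial as $t\to\infty$ (the Gaussian factor tends to $1$), so after multiplying by $\Xi(g)^{-1}\sim e^{\rho d(g)}$ the Schwartz seminorm is not even visibly finite, let alone ``exponentially small as $t\to\infty$.'' In fact $\|e^{-tD^2}\|_{\cB(L^2)}=1$ for all $t$ whenever $0\in\spec(D^2)$, and getting any decay in $t$ of the $\cC(G)$-seminorms requires the sharp Anker/Barbasch--Moscovici-type estimates that also carry the factor $e^{-\rho(H(g))}$, not the generic manifold Gaussian bound. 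Your remark that ``the Gaussian beats the exponential in $\Xi^{-1}$'' is morally correct for each \emph{fixed} $t$, but establishing finiteness of the Schwartz seminorms of the heat kernel this way is itself essentially the Barbasch--Moscovici result, i.e.\ a nontrivial theorem of the same depth as the statement you are trying to prove. These issues are precisely what the paper's route via Lemmas~\ref{lem hat fD2}--\ref{lem est phi} and Arthur's Plancherel-side characterisation of $\cC(G)$ is designed to avoid.
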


For every $V \in \widehat K$, let $\spec_V(D_Y) \subset \spec(D_Y)$ be the set of eigenvalues of the restriction of $D_Y$ to the $V$-isotypical component $L^2(S_Y \otimes W|_Y)_V$ of $L^2(S_Y \otimes W|_Y)$. For $\lambda \in \spec_V(D_Y)$, let $L^2(S_Y \otimes W|_Y)_{V, \lambda} \subset L^2(S_Y \otimes W|_Y)_V$ be the corresponding eigenspace.
Then by \eqref{eq decomp L2E},
\[
L^2(E) \cong \bigoplus_{V \in \widehat K} \bigoplus_{\lambda \in \spec_V(D_Y)} \bigl( L^2(G) \otimes S_{\kp} \otimes  L^2(S_Y \otimes W|_Y)_{V, \lambda}\bigr)^K.
\]
Via the Fourier transform, the right hand side is isomorphic to
 \beq{eq decomp L2E 2}
  \int_{\hat G_{\temp}}^{\oplus}  \bigoplus_{\substack{V \in \widehat K\\ [\pi|_K:S_{\kp} \otimes V]\not=0}} \bigoplus_{\lambda \in \spec_V(D_Y)} \cH_{\pi} \otimes
 \bigl( \cH_{\pi}^*  \otimes S_{\kp} \otimes  L^2(S_Y \otimes W|_Y)_{V, \lambda}\bigr)^K\, d\mu(\pi),
 \eeq
where $\mu$ is the Plancherel measure. Every representation $\pi \in \hat G_{\temp}$ is of the form $\pi = \Ind_{P}^G(\sigma \otimes \nu \otimes 1)$, for a cuspidal parabolic subgroup $P = MAN<G$, a (limit of) discrete series representation $\sigma$ of $M$, and $\nu \in \hat A = i\ka^*$. (See  \cite{KZ1, KZ2}, or Theorem 14.91 in \cite{Knapp}.) We denote by
\[
(\Lambda(P,\sigma), \nu) \in i(\kt\cap \km)^* \otimes i\ka^*
\]
the infinitesimal character of $\Ind_{P}^G(\sigma \otimes \nu \otimes 1)$. 
Then the Casimir element $\Omega_G$ of $G$ acts on $\pi$ as the scalar
\beq{eq Omega G}
\Omega_G(\pi) = -\|\Lambda(P,\sigma)\|^2 + \|\nu\|^2.
\eeq

Let $f$ be as in Proposition \ref{prop fD2}.  Let $\widehat{f(D^2)}$ be the operator on \eqref{eq decomp L2E 2} corresponding to the operator $f(D^2)$ on $L^2(E)$.
\begin{lemma}\label{lem hat fD2}
The operator $\widehat{f(D^2)}$ is given by multiplication by a function on $\hat G_{\temp} \times \spec(D_Y)$. At $(\pi = \Ind_P^G(\sigma \otimes \nu \otimes 1), \lambda)$, with $\lambda \in \spec_V(D_Y)$, the absolute value of this function is at most
\beq{eq bound FT fD2}
a e^{-b\lambda^2} e^{-b (\|\nu\|^2 - \|\Lambda(P,\sigma)\|^2 + \|\mu_V+\rho_K\|^2)},
\eeq
where $\mu_V$ is the highest weight of $V$, and $\rho_K$ is half the sum of a choice of positive roots for $(K, T)$. All derivatives of this function with respect to $\nu$ satisfy the same estimate, for possibly different $a$ and $b$.

\end{lemma}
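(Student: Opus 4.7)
The plan is to diagonalize $D^2$ along the Plancherel decomposition \eqref{eq decomp L2E 2}, show it acts by a scalar on each summand, and then apply functional calculus termwise. The multiplication-operator structure of $\widehat{f(D^2)}$ is then immediate, and the bound will come from the Schwartz-type decay hypothesis on $f$ combined with Parthasarathy's formula for $D_{G,K}^2$.

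First I would note that, since both $D_{G,K}$ and $D_Y$ are odd with respect to the graded tensor product structure, the cross terms in the square of $D$ in \eqref{eq DGK DY} vanish, so
\begin{equation*}
D^2 = D_{G,K}^2 \otimes 1 + 1 \otimes D_Y^2.
\end{equation*}
On the Plancherel summand indexed by $(\pi, V, \lambda)$, namely
\begin{equation*}
\cH_{\pi} \otimes \bigl(\cH_\pi^* \otimes S_\kp \otimes L^2(S_Y \otimes W|_Y)_{V,\lambda}\bigr)^K,
\end{equation*}
the second term acts as $\lambda^2$. The first term $D_{G,K}^2$ commutes with the left regular $G$-action on $\cH_\pi$ and with the $K$-action used to form the invariants, hence preserves the summand; and since $\cH_\pi$ is irreducible under $G$, Schur's lemma forces $D_{G,K}^2$ to act by a scalar on the summand. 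Thus $D^2$ is a scalar operator on each summand, proving that $\widehat{f(D^2)}$ is multiplication by a function on $\hat G_{\temp} \times \spec(D_Y)$.

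To compute the $D_{G,K}^2$-scalar I would invoke Parthasarathy's formula for the square of the Kostant cubic Dirac operator on $\cH_\pi \otimes S_\kp$: on the $K$-isotypic component determined by the highest weight $\mu_V$ (selected by the $K$-invariance), $D_{G,K}^2$ acts as
\begin{equation*}
\pi(\Omega_G) + \|\mu_V + \rho_K\|^2 + C,
\end{equation*}
for a universal constant $C$ depending only on the chosen inner product and positive root system. Combining with the $\lambda^2$ contribution from $D_Y^2$ and substituting \eqref{eq Omega G}, the scalar by which $D^2$ acts on the $(\pi, V, \lambda)$ summand equals
\begin{equation*}
\lambda^2 + \|\nu\|^2 - \|\Lambda(P,\sigma)\|^2 + \|\mu_V + \rho_K\|^2 + C.
\end{equation*}

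Finally, applying the decay hypothesis $|f(x)| \leq a e^{-bx}$ to this scalar and absorbing $C$ into $a$ yields the bound \eqref{eq bound FT fD2}. The derivative statement follows the same pattern: each $\partial/\partial \nu_i$ acting on the multiplier produces a factor linear in $\nu$ (from differentiating $\|\nu\|^2$) together with an evaluation of a higher derivative of $f$; the hypothesis $|f^{(n)}(x)| \leq a_n e^{-b_n x}$ preserves exponential decay, and polynomial-in-$\nu$ factors are dominated by a small reduction of $b$. The main obstacle I anticipate is confirming the precise form of Parthasarathy's formula in the conventions of this paper—specifically, verifying that the additive constant really is independent of $\pi$, $V$, and $\lambda$, so that it can be absorbed into the prefactor—after which the remaining estimates are routine.
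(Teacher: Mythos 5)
Your overall strategy matches the paper's: split $D^2 = D_{G,K}^2 \otimes 1 + 1 \otimes D_Y^2$, identify the $D_{G,K}^2$-scalar via Parthasarathy's formula as $\Omega_G(\pi) + \|\mu_V+\rho_K\|^2$, and feed the resulting argument $\|\nu\|^2 - \|\Lambda(P,\sigma)\|^2 + \|\mu_V+\rho_K\|^2 + \lambda^2$ into the decay hypothesis on $f$. One step in your justification for the scalar action is wrong as stated, though: $D_{G,K} = \sum_j L(X_j)\otimes c(X_j)$ is built from \emph{left}-invariant differentiations, so it is \emph{part of} the left regular $\kg$-action on $\cH_\pi$ rather than an operator in its commutant; Schur's lemma for the irreducible $G$-module $\cH_\pi$ therefore does not apply to $D_{G,K}^2$, which also acts nontrivially on $S_\kp$. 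What is true is that $D_{G,K}^2$ commutes with the right $K$-action (used to form invariants), and the scalar action comes not from Schur but from Parthasarathy's formula itself, which writes $D_{G,K}^2$ as $\Omega_G$ plus a $K$-Casimir term, both constant on each summand. The paper avoids this slip by working with $e^{isD^2} = e^{isD_{G,K}^2} \otimes e^{isD_Y^2}$ and the integral representation $f(D^2) = \int_\R e^{isD^2}\hat f(s)\,ds$, then reading off the multiplier directly after Fourier transform; it also records the Parthasarathy scalar with no additive constant, though your device of absorbing a constant $C$ into the prefactor is harmless.
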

\begin{proof}
Because of the use of graded tensor products in \eqref{eq DGK DY}, 
\[
D^2 = D_{G,K}^2 \otimes 1 + 1 \otimes D_Y^2
\] 
where the tensor products on the right hand side are ungraded. This implies that the two terms commute, so that for all $s \in \R$,
\[
e^{isD^2} = e^{isD_{G,K}^2} \otimes e^{is D_Y^2},
\]
as in (4.3) in \cite{HW2}.
So Proposition 10.3.5 in \cite{Higson00} implies that
\[
f(D^2)= \int_{\R}e^{isD^2}\hat f(s) ds = \int_{\R}e^{isD_{G, K}^2} \otimes  e^{is D_Y^2} \hat f(s) \, ds.
\]

After Fourier transform, the operator $D_{G,K}^2$ acts on \eqref{eq decomp L2E 2} as multiplication by a function, which takes the value
\[
\|\nu\|^2 - \|\Lambda(P,\sigma)\|^2 + \|\mu_V+\rho_K\|^2
\]
at the point corresponding to $V \in \widehat K$ and $\pi = \Ind_P^G(\sigma \otimes \nu \otimes 1)$.  Here we use \eqref{eq Omega G}, and Proposition 3.1 in \cite{Parthasarathy72}; see also  (3.17) in \cite{Atiyah77}.

We find that  $\widehat{f(D^2)}$ is indeed given by multiplication by a function on $\hat G_{\temp} \times \spec(D_Y)$, and its value at  $(\Ind_P^G(\sigma \otimes \nu \otimes 1), \lambda)$, with $\lambda \in \spec_V(D_Y)$, is
\beq{eq FT fD2}
\int_{\R}e^{is(\|\nu\|^2 - \|\Lambda(P,\sigma)\|^2 + \|\mu_V+\rho_K\|^2}   e^{is \lambda^2} \hat f(s) \, ds = f(\|\nu\|^2 - \|\Lambda(P,\sigma)\|^2 + \|\mu_V+\rho_K\|^2 + \lambda^2).
\eeq
By assumption on $f$, this function and all its derivatives with respect to $\nu$ satisfy the desired estimate.
\end{proof}

 Our next goal is to estimate the decay behaviour of \eqref{eq bound FT fD2}. We use the term \emph{rapidly decreasing} to mean decreasing faster than any rational function. Let us define a function $C$ on $\widehat{K}$ by 
 \[
 C(V) = \sum_{\lambda \in \spec_V(D_Y)} e^{-b\lambda^2},
 \]
 for $V \in \widehat{K}$.

\begin{lemma}\label{lem compact case}
The function $C$ decreases rapidly in $\|\mu_V\|$. 
\end{lemma}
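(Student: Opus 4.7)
The plan is to dominate $C(V)$ by a trace of a smoothing operator on the $V$-isotypical subspace, realize that trace as a Fourier coefficient on the compact group $K$, and invoke rapid decay of such coefficients for smooth functions.

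First I would denote by $\pi$ the unitary representation of $K$ on $L^2(S_Y \otimes W|_Y)$ and by $P_V$ the orthogonal projection onto its $V$-isotypical subspace. Since $D_Y$ is $K$-equivariant, each $\lambda \in \spec_V(D_Y)$ occurs with multiplicity at least one in $D_Y|_{P_V L^2}$, so
\[
C(V) \;=\; \sum_{\lambda \in \spec_V(D_Y)} e^{-b\lambda^2} \;\leq\; \Tr\bigl(P_V\, e^{-bD_Y^2}\bigr).
\]
Applying the character projection formula $P_V = \dim(V) \int_K \overline{\chi_V(k)}\, \pi(k)\, dk$, the right-hand side equals
\[
\dim(V) \int_K \overline{\chi_V(k)}\, \phi(k)\, dk, \qquad \phi(k) := \Tr\bigl(\pi(k)\, e^{-bD_Y^2}\bigr).
\]

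The key point is that $\phi \in C^\infty(K)$: since $Y$ is compact and $D_Y$ elliptic, $e^{-bD_Y^2}$ is a smoothing operator with smooth Schwartz kernel, and combined with smoothness of the $K$-action on $S_Y\otimes W|_Y$ this yields smoothness of $\phi$ in $k$. Verifying this smoothness carefully is the one substantive technical step in the argument.

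Finally I would invoke standard Fourier analysis on the compact Lie group $K$: the Casimir $\Omega_K$ acts on $V$ by the scalar $\|\mu_V+\rho_K\|^2 - \|\rho_K\|^2$, so iterating the identity
\[
\int_K \overline{\chi_V(k)}\, \phi(k)\, dk \;=\; \bigl(\|\mu_V+\rho_K\|^2 - \|\rho_K\|^2\bigr)^{-n} \int_K \overline{\chi_V(k)}\, (\Omega_K^n \phi)(k)\, dk
\]
for every positive integer $n$ (valid once $\|\mu_V\|$ is large enough for the scalar to be nonzero) shows that this Fourier coefficient decays faster than any polynomial in $\|\mu_V\|$. Since $\dim(V)$ grows only polynomially in $\|\mu_V\|$ by Weyl's dimension formula, multiplying cannot destroy rapid decay, and we conclude that $C(V)$ decreases rapidly in $\|\mu_V\|$.
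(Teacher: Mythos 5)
Your proof is correct and takes essentially the same route as the paper: both arguments realise a bound for $C(V)$ as a Fourier coefficient on the compact group $K$ of the smooth function $k\mapsto \Tr(\pi(k)e^{-bD_Y^2})$, and then invoke rapid decay of Fourier coefficients of smooth functions on compact Lie groups. You are somewhat more careful than the paper's one-paragraph proof in two small respects: you note that $\spec_V(D_Y)$ is a set (so $C(V)\le \Tr(P_V e^{-bD_Y^2})$ rather than an equality, since the trace counts multiplicities), and you handle the $\dim(V)$ factor explicitly and spell out the rapid-decay step via Casimir integration by parts, whereas the paper simply cites that the Fourier transform of a smooth function on $K$ decays rapidly.
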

\begin{proof}
Consider the  special case of Proposition \ref{prop fD2} where $G = K$ is compact. Then the heat kernel of $D$ lies in 
\[
\mathcal{C}(E) = \left(C^\infty(K) \otimes \mathcal{L}^1(E|_Y) \right)^K,
\]
simply because it is smooth. Its component in $\mathcal{L}^1(E|_Y)$ is nonnegative, so its trace norm on that component equals its trace. Applying this trace on $Y$, we obtain a smooth function on $K$, whose Fourier transform equals $C$. And the Fourier transform of a smooth function on $K$ is a rapidly decreasing function on $\widehat K$.
\end{proof}

\begin{lemma}\label{lem compact case 2}
The function
\[
\widetilde{C}(x) :=\sum_{\substack{W \in \widehat K\\ \|\mu_W\| \geq x}}C(W)
\]
decreases rapidly in $x\geq 0$. 
\end{lemma}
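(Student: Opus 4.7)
The plan is to combine the rapid decay of $C$ itself, given by Lemma \ref{lem compact case}, with the polynomial growth of the counting function for dominant integral weights of $K$.

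First, I would invoke Lemma \ref{lem compact case} to obtain that for every integer $N \geq 0$ there exists a constant $D_N > 0$ with
\[
C(W) \leq \frac{D_N}{(1+\|\mu_W\|)^N}
\]
for all $W \in \widehat K$. Next, I would use the basic fact that the dominant integral weights of $K$ (parametrising $\widehat K$) are lattice points in a Weyl chamber of $i\kt^*$, so that the number of $W \in \widehat K$ with $\|\mu_W\| \leq R$ is bounded by a polynomial in $R$ of degree $d := \dim T$; in particular, there is a constant $B > 0$ such that the number of $W$ with $\|\mu_W\| \in [k, k+1)$ is at most $B(1+k)^{d-1}$ for every integer $k \geq 0$.

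Combining these two facts, I would bound the tail sum by grouping highest weights into unit-width annuli. Given $M \geq 0$, set $N := M + d + 1$ and estimate
\[
\widetilde{C}(x) \;\leq\; \sum_{\substack{W \in \widehat K \\ \|\mu_W\| \geq x}} \frac{D_N}{(1+\|\mu_W\|)^N} \;\leq\; \sum_{k \geq \lfloor x \rfloor} \frac{B D_N (1+k)^{d-1}}{(1+k)^{M+d+1}} \;=\; B D_N \sum_{k \geq \lfloor x \rfloor} \frac{1}{(1+k)^{M+2}}.
\]
The last sum is bounded by a constant times $(1+x)^{-(M+1)}$, so $\widetilde{C}(x) \leq C'_M (1+x)^{-M}$ for some constant $C'_M$. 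Since $M$ was arbitrary, this shows that $\widetilde{C}$ is rapidly decreasing.

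There is no real obstacle here; the step that requires the most care is simply verifying the polynomial growth bound on the number of dominant weights, but this is a standard fact about lattices in Euclidean space intersected with a rational polyhedral cone. The argument is entirely routine once Lemma \ref{lem compact case} is available.
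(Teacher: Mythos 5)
Your proposal is correct and follows essentially the same approach as the paper's proof: both decompose the tail sum into unit-width shells in $\|\mu_W\|$, bound the number of representations in each shell polynomially, and combine this with the rapid decay of $C$ from Lemma \ref{lem compact case}. Your version is just slightly more explicit about the choice of exponents.
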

\begin{proof}
If $x \in \N$, then
\[
\begin{split}
\widetilde{C}(x) &= \sum_{n=x}^{\infty} \sum_{\substack{W \in \widehat K\\ n \leq \|\mu_W\|< n+1}} C(W)\\
 &\leq \sum_{n=x}^{\infty} 
 \# \bigl\{W \in \widehat K; n \leq \|\mu_W\|< n+1 \bigr\}\max_{\|\mu_W\| \geq n} C(W).
\end{split}
\]
The number $ \# \{W \in \widehat K; n \leq \|\mu_W\|< n+1\}$ increases at most polynomially in $n$. And $\max_{\|\mu_W\| \geq n} C(W)$ decreases rapidly in $n$, because $C(W)$ decays rapidly in $\|\mu_W\|$ by Lemma \ref{lem compact case}. So the terms in the latter sum decay rapidly in $n$. This implies that the sum decays rapidly in $x$, for example via an estimate of the sum by an integral.
%
\end{proof}

\begin{lemma}\label{lem est phi}
%
Let $\varphi$ be the operator-valued function on $\hat G_{\temp}$ given by 
\begin{multline}\label{eq def phi}
\varphi(\pi =  \Ind_P^G(\sigma \otimes \nu \otimes 1))
 =\\
\sum_{\substack{V \in \widehat K\\ [\pi |_K:S_{\kp} \otimes V]\not=0}} 
 \sum_{\lambda \in \spec_V(D_Y)} e^{-b\lambda^2} e^{-b (\|\nu\|^2 - \|\Lambda(P,\sigma)\|^2 + \|\mu_V+\rho_K\|^2)} \cdot \mathrm{id}_{(\pi \otimes S_\kp \otimes V)^K},
 \end{multline}
for some $b>0$. Then the  function
\[
\|{\varphi}\|_{\HS} \in C^\infty(\widehat{G}_{\temp})
\]
is a rapidly decreasing function in $\|\Lambda(P, \sigma)\|$ and $\| \nu\|$, where $\| \bullet\|_{\HS}$ means the Hilbert--Schmidt norm. 
\end{lemma}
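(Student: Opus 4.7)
The plan is to compute $\|\varphi(\pi)\|_{\HS}^2$ directly from \eqref{eq def phi}, using the block decomposition afforded by the Fourier decomposition \eqref{eq decomp L2E 2}, and then to combine Parthasarathy's Dirac operator inequality with the rapid decay of $C(V)$ supplied by Lemma \ref{lem compact case} to extract the required bounds.

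First I would note that $\varphi(\pi)$ is a direct sum, indexed by $(V,\lambda)$, of scalar multiples of identities on finite-dimensional subspaces, so that after carrying out the $\lambda$-summation one arrives at
\begin{equation*}
\|\varphi(\pi)\|_{\HS}^2 = e^{-2b\|\nu\|^2}\,e^{2b\|\Lambda(P,\sigma)\|^2}\sum_{V \in \widehat{K}} C(V)^2\, e^{-2b\|\mu_V+\rho_K\|^2}\,\dim(\pi\otimes S_\kp \otimes V)^K,
\end{equation*}
where the sum is restricted to $V$ with $[\pi|_K : S_\kp \otimes V] \ne 0$. The exponent in each summand equals $-2b$ times the corresponding eigenvalue of $D_{G,K}^2$, which is nonnegative by Parthasarathy's formula as used in the proof of Lemma \ref{lem hat fD2}; this yields the key inequality $\|\mu_V+\rho_K\|^2 \geq \|\Lambda(P,\sigma)\|^2 - \|\nu\|^2$ for every contributing $V$.

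I would then split the analysis into two complementary regimes. When $\|\nu\| \geq \|\Lambda(P,\sigma)\|/2$, the prefactor $e^{-2b\|\nu\|^2}$ already provides super-polynomial decay in both $\|\nu\|$ and $\|\Lambda\|$, with Parthasarathy used only to bound the remaining sum uniformly. When $\|\nu\| < \|\Lambda(P,\sigma)\|/2$, the Parthasarathy constraint forces $\|\mu_V + \rho_K\|^2 \geq 3\|\Lambda\|^2/4$, so that only $K$-types $V$ with $\|\mu_V\|$ of order $\|\Lambda\|$ can contribute; the rapid decay of $C(V)^2\,\dim(\pi\otimes S_\kp \otimes V)^K$ in $\|\mu_V\|$, obtained from Lemma \ref{lem compact case}, the Weyl dimension formula, and the Frobenius identity $[\pi|_K : S_\kp \otimes V] = [(S_\kp \otimes V)|_{K\cap M} : \sigma|_{K\cap M}]$, then translates, via a tail estimate in the style of Lemma \ref{lem compact case 2}, into rapid decay of the full expression in $\|\Lambda\|$.

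\textbf{Main obstacle.} The growing factor $e^{2b\|\Lambda(P,\sigma)\|^2}$ cannot be defeated by Parthasarathy's inequality alone, which furnishes only uniform boundedness of $\|\varphi\|_{\HS}$. Extracting genuine rapid decay in $\|\Lambda\|$ requires combining the Parthasarathy constraint on contributing $V$ with the spectral decay of $C(V)$ in $\|\mu_V\|$, and making this combination uniform across the parameter space $(P,\sigma,\nu)$ is the technical heart of the argument.
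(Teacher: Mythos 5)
Your two-regime split by comparing $\|\nu\|$ with $\|\Lambda(P,\sigma)\|$ is a genuinely different organisation from the paper's, which instead establishes decay in $\|\nu\|$ uniformly in $\sigma$, and decay in $\|\Lambda(P,\sigma)\|$ uniformly in $\nu$, as two separate claims. But as written, your argument has a gap in the first regime, and it traces back to the inequality you extract from positivity of $D_{G,K}^2$.

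You derive $\|\mu_V+\rho_K\|^2 \geq \|\Lambda(P,\sigma)\|^2 - \|\nu\|^2$, which is indeed what nonnegativity of the eigenvalue $\|\nu\|^2 - \|\Lambda\|^2 + \|\mu_V+\rho_K\|^2$ gives. However, this is weaker than what the paper's equation \eqref{eq Partha} records as Parthasarathy's Dirac inequality, which has no $-\|\nu\|^2$ term: $\|\mu_V+\rho_K\|^2 \geq \|\Lambda(P,\sigma)\|^2$ for every contributing $V$. This difference is fatal in your regime $\|\nu\| \geq \|\Lambda\|/2$. Plugging your weaker bound into the formula for $\|\varphi\|_{\HS}^2$, the factors $e^{-2b\|\nu\|^2}$, $e^{2b\|\Lambda\|^2}$, and the supremum $e^{-2b\|\mu_V+\rho_K\|^2} \leq e^{-2b(\|\Lambda\|^2-\|\nu\|^2)}$ combine to exactly $1$, so you only recover a uniform bound on $\|\varphi\|_{\HS}$, not decay; the prefactor $e^{-2b\|\nu\|^2}$ is entirely consumed cancelling $e^{2b\|\Lambda\|^2}$. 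By contrast, with the full Parthasarathy inequality the factor $e^{2b\|\Lambda\|^2}e^{-2b\|\mu_V+\rho_K\|^2} \leq 1$ on its own, and the prefactor $e^{-2b\|\nu\|^2}$ survives to give decay, which (since $\|\Lambda\| \leq 2\|\nu\|$ in that regime) is decay in both variables. Your second regime is fine as stated: there the constraint $\|\mu_V+\rho_K\|^2 \geq 3\|\Lambda\|^2/4$ combined with the tail estimate in the style of Lemma \ref{lem compact case 2} does the work, and you in fact sidestep the paper's Frobenius reciprocity argument (equation \eqref{eq pi K}) used to show $\min_K(\pi)$ grows with $\|\Lambda(P,\sigma)\|$; once the genuine Parthasarathy inequality is invoked, this shortcut is valid, and the case distinction itself becomes unnecessary.
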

\begin{proof}
We study the decay behaviour of $\varphi$ in $\nu$ and $\sigma$ separately.

For the decay behaviour in $\nu$, we note that positivity of $D_{G,K}^2$ implies that
\beq{eq Partha}
-\|\Lambda(P, \sigma)\|^2 + \|\mu_V+ \rho_K\|^2 \geq 0.
\eeq
(This is known as Parthasarathy's Dirac inequality.) Hence for all $\sigma$, 
\beq{eq phi nu}
\varphi(\Ind_P^G(\sigma \otimes \nu \otimes 1)) \leq  \Bigl(\sum_{V \in \widehat K} C(V) [\pi|_K: S_{\kp} \otimes V]\Bigr) e^{-b \|\nu\|^2 }.
\eeq
For any $W \in \widehat {K}$, we have
\[
[\pi|_K:W] \leq \dim(W).
\]
See for example Theorem 8.1 in \cite{Knapp}. (Compare also  Theorem 2.9(3) and Example 2.12 in \cite{Kobayashi98}.)
So  $[\pi|_K: S_{\kp} \otimes V]$ grows at most polynomially in $\|\mu_V\|$, uniformly in $\pi$. And
the function $C$ decreases rapidly by Lemma \ref{lem compact case}. So the sum over $V$ in \eqref{eq phi nu} converges, and $\varphi$ decreases rapidly in $\|\nu\|$, uniformly in $\sigma$.


We now consider decay behaviour with respect to $\sigma$.
For any $\pi \in \widehat{G}$, we write
\[
{\min}_K({\pi}):= \min \{  \|\mu_V\|; V \in \widehat K, [\pi|_K:S_{\kp}\otimes V]\not=0\}.
\]
%
By Frobenius reciprocity, we have for all $W \in \widehat K$, 
\beq{eq pi K}
[\pi|_K :W]= \left[\Ind_P^G(\sigma \otimes \nu \otimes 1)\big|_K:W\right] = \left[\left(L^2(K) \otimes \sigma \right)^{M \cap K}:W \right] = \left(W^*|_{M \cap K} \otimes \sigma \right)^{M \cap K}. 
\eeq
This implies that $\|{\min}_K(\pi)\|$  increases as a positive power of  $\|\Lambda(P, \sigma)\|$ when $\|\Lambda(P, \sigma)\|$ is large enough.

Now \eqref{eq Partha} implies that
\[
\|\varphi(\Ind_P^G(\sigma \otimes \nu \otimes 1))\|_{\HS} \leq
\widetilde C({\min}_K(\pi))  [\pi|_K: S_{\kp} \otimes V].
%
\]
Because ${\min}_K(\pi)$  increases as a positive power of  $\|\Lambda(P, \sigma)\|$ and $\widetilde C$ decreases rapidly by Lemma \ref{lem compact case 2}, we find that $\widetilde C({\min}_K(\pi)) $ decreases rapidly in $\|\Lambda(P, \sigma)\|$ and is independent of $\nu$.  Hence $\|\varphi(\Ind_P^G(\sigma \otimes \nu \otimes 1))\|_{\HS}$ also decreases rapidly in  $\|\Lambda(P, \sigma)\|$, uniformly in $\nu$.
%
%
%
%
%
\end{proof}

\begin{proof}[Proof of Proposition \ref{prop fD2}.]
We first show that $f(D^2) \in \cC(E)$. We consider its Fourier transform $\widehat{f(D^2)}$ on \eqref{eq decomp L2E 2}, and apply the trace norm on the component $L^2(S_Y \otimes W|_Y)$. 
Lemmas \ref{lem hat fD2} and  \ref{lem est phi} imply that the resulting function is a Schwartz function on $\hat G_{\temp}$. By a characterisation of $\cC(G)$ by Arthur \cite{Arthur75} (based on results by Harish-Chandra \cite{HC58, HC66}),
this implies that $f(D^2)$ lies in $\cC(G) \otimes \End(S_{\kp})$ after we apply the trace norm on $L^2(S_Y \otimes W|_Y)$. Hence $f(D^2) \in \cC(E)$.

 To show that $Df(D^2) \in \cC(E)$, we write
 \[
D f(D^2)= (D_{G,K} \otimes 1)f(D^2)
%
+
(1 \otimes  D_Y)f(D^2),
 \]
 using graded tensor products. The first term on the right lies in $\cC(E)$ because the universal enveloping algebra of $\kg$ preserves $\cC(G)$, so that $D_{G,K} \otimes 1$ preserves $\cC(E)$. The second term on the right lies in $\cC(E)$ by a similar argument as for $f(D^2)$. The difference is that for $(1\otimes D_Y)f(D^2)$, the expression  \eqref{eq FT fD2} is replaced by
 \[
 \int_{\R}e^{is(\|\nu\|^2 - \|\sigma\|^2 + \|\mu_V+\rho_K^M\|^2)}\lambda  e^{is \lambda^2} \hat f(s) \, ds = \lambda f(\|\nu\|^2 - \|\sigma\|^2 + \|\mu_V+\rho_K^M\|^2 + \lambda^2).
 \]
The rest of the argument still applies, with $e^{-b\lambda^2}$ replaced by $\lambda e^{-b \lambda^2}$ everywhere.
\end{proof}

\subsection{The index on $X$}

We will use an explicit representative of the $K$-theory class $\ind_G(D)$. 
Suppose  that $G/K$ and $X$ are even-dimensional. Then $Y$ is also even-dimensional.
We will later deduce the odd-dimensional case of Theorem \ref{thm index} from the even-dimensional case; see Lemma \ref{lem even odd}.
 Let $D^{\pm}$ be the restriction of $D$ to even and odd-graded sections of $E$, respectively. Fix $t>0$, and write\label{pag qt}
\beq{eq def qt}
q_t:= \left( 
\begin{array}{cc}
e^{-tD^-D^+} & e^{-\frac{t}{2}D^- D^+} \frac{1-e^{-t D^-D^+}}{D^-D^+} D^- \\
e^{-\frac{t}{2}D^+ D^-} \frac{1-e^{-t D^+D^-}}{D^+D^-} D^+ & 1_{E^-}-e^{-tD^+ D^-} 
\end{array}
\right) \quad \in \Gamma^{\infty}(\Hom(E)),
\eeq
and\label{pag pE}
\[
p_E:= 
\left( 
\begin{array}{cc}
0&0 \\ 0 & 1_{E^-}
\end{array}
\right) \quad \in \cB(L^2(E)).
\]

\begin{lemma} \label{lem hk SE}
The operator $q_t$ lies in the unitisation of $\cC(E)$.
\end{lemma}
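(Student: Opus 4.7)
The plan is to write $q_t - p_E$ explicitly as a finite combination of operators of the form $f(D^2)$ and $D f(D^2)$, for Schwartz functions $f$ satisfying the exponential decay bound \eqref{eq bound fn} of Proposition \ref{prop fD2}, and then invoke that proposition. Since $p_E$ is bounded and $G$-equivariant, this will suffice to place $q_t$ in the unitisation of $\cC(E)$.

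Let $\gamma$ denote the grading operator on $E$, and set $f_1(x) := e^{-tx}$ and
\[
g(x) := e^{-tx/2}\cdot\frac{1-e^{-tx}}{x}.
\]
Using the intertwining identity $D^- h(D^+D^-) = h(D^-D^+) D^-$ (and its adjoint), which holds for any bounded Borel function $h$ by functional calculus applied to the operator identity $D^-(D^+D^-) = (D^-D^+)D^-$, a direct block computation yields
\[
q_t - p_E \;=\; \gamma\, f_1(D^2) \;+\; D\, g(D^2).
\]
Here $\gamma f_1(D^2)$ contributes the two diagonal blocks $\pm e^{-tD^\mp D^\pm}$ of $q_t - p_E$, while $D g(D^2)$ contributes the two off-diagonal blocks of $q_t$.

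Next I would verify \eqref{eq bound fn} for $f_1$ and $g$. For $f_1$ the bound is immediate. For $g$, the factor $(1-e^{-tx})/x$ extends to a smooth function on $[0,\infty)$, equal to $t$ at $x=0$, with uniformly bounded derivatives on $[0,\infty)$ (computed from the power series near $0$ and by direct differentiation away from $0$). Multiplying by $e^{-tx/2}$ and applying the Leibniz rule then gives a bound $|g^{(n)}(x)| \leq a_n e^{-tx/3}$ for every $n \geq 0$, which is of the form required by \eqref{eq bound fn}.

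Finally, Proposition \ref{prop fD2} gives $f_1(D^2), D g(D^2) \in \cC(E)$. Since $\gamma$ is a bounded, $G$-equivariant endomorphism of $E|_Y$, left multiplication by $\gamma$ preserves both the trace-norm seminorms defining $\cC(E) = \cC(G, \cL^1(L^2(E|_Y)))$ and the bi-$K$-equivariance condition \eqref{eq fj KK}, so $\gamma f_1(D^2) \in \cC(E)$ as well. Thus $q_t - p_E \in \cC(E)$ and $q_t$ lies in the unitisation. The only non-routine step in this argument is the decay estimate on $g$ and its derivatives; the algebraic rearrangement of $q_t - p_E$ and the appeal to Proposition \ref{prop fD2} are direct.
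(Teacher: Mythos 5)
Your argument is correct and is essentially the paper's own proof: the paper likewise writes the blocks of $q_t - p_E$ as $f(D^2)$ and $Df(D^2)$ for Schwartz $f$ with exponentially decaying derivatives and appeals directly to Proposition \ref{prop fD2}. You simply make explicit two details that the paper leaves implicit, namely the intertwining via the grading operator $\gamma$ and the verification that $g(x)=e^{-tx/2}(1-e^{-tx})/x$ and all its derivatives satisfy the required exponential bound.
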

\begin{proof}
By Proposition \ref{prop fD2}, the operators $e^{-tD^2}$ and 
$e^{-\frac{t}{2}D^2} \frac{1-e^{-t D^2}}{D^2} D$  lie in $\cC(E)$. 
\end{proof}


The operator $q_t$ is an idempotent, so we have 
\[
[q_t] - [p_E] \in K_*(\cC(E)).
\]
For any cyclic cocycle $\varphi$ over $\cC(G)$,
\beq{eq ind qt}
\langle \varphi, \ind_G(D) \rangle = \langle \varphi \# \Tr, [q_t] - [p_E] \rangle,
\eeq
where the pairing on the right is as in \eqref{eq pairing Tr}.
See page 356 in \cite{Connes82}.
%
Combining \eqref{eq ind qt} with Proposition \ref{prop cocycle MA}, we obtain the main conclusion of this section.
\begin{proposition}\label{prop ind X MA}
We have
\[
\langle \Phi^P_x,  \ind_G(D) \rangle = \langle \Phi^{MA}_x \# \Tr, [q_t^N] - [p_E^N]\rangle.
\]
\end{proposition}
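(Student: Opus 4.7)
The strategy is to combine the explicit heat-kernel representative \eqref{eq def qt} of $\ind_G(D)$ with the cocycle identity from Proposition \ref{prop cocycle MA}. By \eqref{eq ind qt}, the left-hand side equals $\langle \Phi^P_x \# \Tr,\, [q_t]-[p_E]\rangle$, and by Lemma \ref{lem hk SE} together with Proposition \ref{prop fD2}, all matrix entries of $q_t-p_E$ lie in $\cC(E)$, while $q_t$ itself sits in $M_2$ of the unitisation of $\cC(E)$. The pairing of an even cyclic $l$-cocycle with such a class is given by the standard universal formula as a finite sum of terms of the form $(\Phi^P_x \# \Tr)(a_0, a_1, \ldots, a_l)$, where each $a_i$ is a matrix entry of $q_t-p_E$ or of $q_t$, and in particular each $a_i$ lies in $\cC(E)$ (after subtracting a constant in the case of $p_E$).

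Next I would verify that every such entry satisfies the $K$-biequivariance condition \eqref{eq fj KK}. For the heat-kernel entries of $q_t$, this is immediate from the $G$-equivariance of $D$: the bounded operators appearing in \eqref{eq def qt} are defined by functional calculus from $D$ and therefore commute with the $G$-action on $L^2(E)$, so the kernels they define through \eqref{eq kappa kappa tilde} satisfy $\tilde\kappa(kgk') = k\circ \tilde\kappa(g)\circ k'$ in $\cL^1(L^2(E|_Y))$. The entries of $p_E$ are $0$ or $1_{E^-}$, for which \eqref{eq fj KK} is trivial. This puts us in the hypothesis of Proposition \ref{prop cocycle MA}.

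Applying that proposition termwise converts every $(\Phi^P_x \# \Tr)(a_0, \ldots, a_l)$ into $(\Phi^{MA}_x \# \Tr)(a_0^N, \ldots, a_l^N)$. Extending the map $f \mapsto f^N$ entrywise to matrices and unitally to the unitisations (so that the adjoined unit maps to the adjoined unit), we have $(q_t)^N = q_t^N$ and $(p_E)^N = p_E^N$, so the alternating sum on the $G$-side reassembles into the same universal formula applied on the $MA$-side, which is by definition $\langle \Phi^{MA}_x \# \Tr,\, [q_t^N]-[p_E^N]\rangle$. The main obstacle is bookkeeping: one needs to confirm that $q_t-p_E$ genuinely sits in $M_2(\cC(E))$ (which uses Proposition \ref{prop fD2} for both $f(D^2)$ and $Df(D^2)$) and that the right-hand side makes sense as a formal $K$-theoretic pairing, since the map $f \mapsto f^N$ need not preserve idempotency; once the universal polynomial formula for the pairing is in place, the substitution reduces to a direct application of Proposition \ref{prop cocycle MA}.
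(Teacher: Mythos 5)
Your proof follows essentially the same route as the paper's: represent the pairing via \eqref{eq ind qt}, verify the $K\times K$-invariance \eqref{eq fj KK} for the heat-kernel entries of $q_t$ (using Lemma \ref{lem hk SE} and \eqref{eq SE KK}), and apply Proposition \ref{prop cocycle MA}. You have merely unpacked the bookkeeping (matrix entries, unitisation, termwise application) that the paper leaves implicit, and your remark about $f\mapsto f^N$ not obviously preserving idempotency is resolved downstream by Lemma \ref{lem kappa t N} and Proposition \ref{prop qtN}, which show $q_t^N$ is in fact the heat-kernel idempotent for $D_{X_{MA}}$.
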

\begin{proof}
By Lemma \ref{lem hk SE} and the $K\times K$-invariance property \eqref{eq SE KK} of the elements of $\cC(E)$, the element $q_t \in \cC(G, \cL^1(L^2(E|_Y)))$ has the $K\times K$-invariance property of the functions $f_j$ in 
 Proposition \ref{prop cocycle MA}. By that proposition, \eqref{eq ind qt} and \eqref{eq pairing Tr}, the claim follows.
\end{proof}

\section{An index pairing on $X/N$}

We construct a Dirac operator on $X/N$ and give an explicit realisation of its $MA$-equivariant index, Proposition \ref{prop qtN}. This proposition is a variation on the commutativity of the top left  diagram in \eqref{eq diagram proof}.

\subsection{Operators on $X$ and $X/N$}

Consider the $MA$-invariant submanifold $X_{MA} := MA \times_{K \cap M} Y \subset X$.\label{pag XMA} Then $X = NAM \times_{K \cap M} Y$, so the inclusion map $X_{MA} \hookrightarrow X$ induces $X_{MA} \cong X/N$.
For $\kappa \in \cC(E) \cap \Gamma^{\infty}(\Hom(E))$, let 
\beq{eq kappa N}
\kappa^N \in \bigl(\cC(MA) \hat \otimes \Gamma^{\infty}(\Hom(E|_Y))\bigr)^{K\cap M \times K\cap M} \hookrightarrow
\End(E|_{X_{MA}})
\eeq
be defined as in \eqref{eq fN}. 
(We use the completed tensor product of the Fr\'echet space $\cC(MA)$ and the nuclear Fr\'echet space $ \Gamma^{\infty}(\Hom(E|_Y))$ here, instead of the space $\cC(MA, \cL^1(L^2(E|_Y)))$, to make it clear that $\kappa^N$ defines a smooth kernel on $X_{MA}$.)
Explicitly, for all $m \in M$, $a \in A$ and $y,y' \in Y$,\label{pag kappaN}
\[
\kappa^N(ma, y,y') = \int_{N} \kappa(nam, y, y')\, dn \quad \in \Hom(E_{y'}, E_y).
\]

 Consider the space  $\Gamma^{\infty}(E)^{N, c}$ of  smooth, $N$-invariant sections of $E$ whose support has compact image in $X/N$. Then $\Gamma^{\infty}(E)^{N, c} \cong \Gamma^{\infty}_c(E|_{X_{MA}})$ via restriction to $X|_{MA}$. Via this identification, a section $\kappa^N$ as in \eqref{eq kappa N} defines an operator on $\Gamma^{\infty}(E)^{N, c}$.
If $s_1$ and $s_2$ are sections of $E$ that are not necessarily square-integrable, we will denote the integral
\[
\int_X (s_1(p), s_2(p))_E\, d\vol_p
\]
by $(s_1, s_2)_{L^2(E)}$ if it converges. 
\begin{lemma} \label{lem inner prod N}
For all $\kappa \in \cC(E)$, $\sigma \in \Gamma^{\infty}_c(E)$ and $s \in \Gamma^{\infty}(E)^{N, c}$,
\beq{eq inner prod N}
(\kappa^* \sigma, s)_{L^2(E)} = (\sigma, \kappa^N s)_{L^2(E)}.
\eeq
In particular, the integral defining the left hand side converges.
\end{lemma}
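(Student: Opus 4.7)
The lemma is a formal-adjoint relation for the $G$-equivariant operator $\kappa$, with the twist that the test section $s$ is only $N$-invariant (with compact image in $X/N$) rather than square-integrable. My plan is to (i) verify absolute convergence of the left-hand side using the Schwartz decay of $\kappa \in \cC(E)$ together with the compact support of $\sigma$, (ii) apply Fubini to pass the adjoint through, producing an expression of the form $\int_X (\sigma(p'), T(p'))_{E_{p'}}\, d\vol_{p'}$ with $T(p') = \int_X \kappa(p', p) s(p)\, d\vol_p$, and (iii) identify $T$ with $\kappa^N s$ as $N$-invariant sections of $E$ by unwinding the $N$-integration.

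\textbf{Convergence and Fubini.} Via the slice $X = G \times_K Y$, $\kappa$ is encoded by a Schwartz kernel $\tilde\kappa \in \cC(G, \cL^1(L^2(E|_Y)))$ through \eqref{eq kappa kappa tilde}. Since $\sigma$ has compact support in $X$, the section $\kappa^*\sigma$ at $gy \in X$ is bounded pointwise by a Schwartz-decaying function of $g \in G/K$. Combined with the boundedness of $s$ (which is continuous with compact image modulo $N$, hence bounded on $X$), this yields absolute integrability of $(\kappa^*\sigma(p), s(p))_{E_p}$ on $X$, establishing the second assertion of the lemma. The same pointwise bound applied to $|\kappa(p', p)|\, |\sigma(p')|\, |s(p)|$ makes the double integral absolutely convergent, so after expanding $\kappa^*\sigma$ and applying Fubini we obtain
\[
(\kappa^*\sigma, s)_{L^2(E)} = \int_X (\sigma(p'), T(p'))_{E_{p'}}\, d\vol_{p'}, \qquad T(p') := \int_X \kappa(p', p) s(p)\, d\vol_p.
\]

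\textbf{Identification $T = \kappa^N s$.} The $G$-equivariance of $\kappa$ combined with the $N$-invariance of $s$ gives $T(np') = n \cdot T(p')$ for all $n \in N$, so $T$ is $N$-invariant, and it suffices to identify $T$ with $\kappa^N s$ on $X_{MA}$. For $q_0 \in X_{MA}$, decomposing $X = N \cdot X_{MA}$ and using $s(nq) = n \cdot s(q)$ produces
\[
T(q_0) = \int_{X_{MA}} \Bigl( \int_N \kappa(q_0, nq)\, n\, dn \Bigr) s(q)\, d\vol_q.
\]
Parametrizing $q_0 = m_0 a_0 y_0$ and $q = m' a' y'$ via $X_{MA} = MA \times_{K \cap M} Y$, expanding $\kappa(q_0, nq)$ through \eqref{eq kappa kappa tilde}, and substituting $n \mapsto (m_0 a_0) n (m_0 a_0)^{-1}$ (valid because $MA$ normalises $N$) identifies the bracketed $N$-integral with the kernel of $\kappa^N$ at $(q_0, q)$ built from \eqref{eq fN}. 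Hence $T = \kappa^N s$, first on $X_{MA}$ and then on all of $X$ by $N$-invariance, and substituting back yields the claim.

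\textbf{Main obstacle.} The technical heart is the last identification: one must check that, under the Haar normalization $dg = dk\, dm\, da\, dn$ stated in \eqref{eq decomp dg}, the change of variables conjugating $N$ by $MA$ introduces no residual modular factor, so that the $N$-integral arising naturally from the slice decomposition aligns exactly with the $N$-integral used to define $\kappa^N$ in \eqref{eq fN}. Everything else reduces to standard Fubini once Schwartz decay is in hand.
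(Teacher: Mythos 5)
The overall strategy you take --- absolute convergence via Schwartz decay, Fubini and the adjoint to pass $\kappa$ onto $s$, exploiting $N$-invariance of $s$, and identifying the resulting $N$-integral with the kernel of $\kappa^N$ --- is the same as the paper's. The decompositions are also essentially the same: you write $X = N\cdot X_{MA}$, the paper writes $X = NAM\times_{K\cap M}Y$ and integrates out the slice. So this is the right plan, not a genuinely different route.

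Where the two arguments diverge is exactly at the place you flag as the ``main obstacle'', and that is also where your write-up has a real gap. You perform the change of variables $n\mapsto (m_0a_0)n(m_0a_0)^{-1}$, observe that this is a conjugation of $N$ by an element of $MA$, and assert that it introduces no residual modular factor. But conjugation by $MA$ does \emph{not} preserve Haar measure on $N$: the Jacobian is $|\det\Ad(m_0a_0)|_{\kn}| = e^{2\rho_P(\log a_0)}$, the modulus of the parabolic $P=MAN$, which is nontrivial whenever $A$ is. So that substitution, taken on its own, is wrong by a factor of $\delta_P(m_0 a_0)^{\pm 1}$. This factor should in fact be cancelled by the Jacobian you implicitly set to $1$ when decomposing the Riemannian density $d\vol_X$ into $dn\, d\vol_{X_{MA}}$: the map $N\times X_{MA}\to X$ is a diffeomorphism, but since the $G$-invariant metric on $X$ is not a Riemannian product over the $N$-fibration, $d\vol_X$ pulls back to $J(q)\, dn\, d\vol_{X_{MA}}(q)$ with $J$ depending on the $MA$-coordinate of $q$ through the same modular function. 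Unless you display both Jacobians and show the cancellation, the argument does not close.

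The paper's proof is designed to avoid this issue. After unwinding with the adjoint and $N$-invariance, it substitutes $n\mapsto n'^{-1}n$, a \emph{left translation} in $N$ (hence Jacobian-free for any locally compact group, not just unimodular ones), and then identifies the remaining $N$-integral using only unimodularity of $N$, never conjugating $N$ by $A$. To repair your argument you can either (i) replace the conjugation by that left-translation device and re-derive the $N$-integral identity, or (ii) keep your decomposition $X = N\cdot X_{MA}$ but compute the volume Jacobian $J(q)$ explicitly and show that it cancels the conjugation Jacobian $\delta_P(m_0a_0)$; option (i) is cleaner and is what the paper does.

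One smaller point: you should state (as the paper does implicitly through its displayed integrals) that after applying Fubini the pointwise pairing $(\sigma(p'), T(p'))$ is indeed integrable, and that your use of the $G$-invariance relation $\kappa(gp,gq)=g\kappa(p,q)g^{-1}$ is at the level of the kernel \emph{of} $\kappa$, not of $\tilde\kappa$; the paper's own notation conflates $\kappa$ and $\tilde\kappa$ in a couple of places, so it is worth being explicit to avoid reproducing that slip.
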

\begin{proof}
For $\kappa \in \cC(E)$, let $\tilde\kappa$ be defined as in \eqref{eq kappa kappa tilde}. Writing $X = NAM \times_{K \cap M} Y$, we see that the left hand side of \eqref{eq inner prod N} equals
\begin{multline*}
\int_{NAM} \int_Y \int_{NAM} \int_Y \bigl(
\tilde \kappa^*(namy, n'a'm'y') \sigma(n'a'm'y'), s(namy)
 \bigr)_E \\
dy'\, dn'\,da'\, dm'\, 
dy\, dn\,da\, dm \\
\end{multline*}
By definition of the adjoint kernel $\tilde \kappa^*$, and by $N$-invariance of $s$, and $G$ invariance of $\tilde \kappa$, this integral equals
\begin{multline*}
\int_{NAM} \int_Y \int_{NAM} \int_Y \bigl(
 \sigma(n'a'm'y'), n \tilde \kappa(n^{-1}n'a'm'y', amy)s(amy)
 \bigr)_E \\
dy'\, dn'\,da'\, dm'\, 
dy\, dn\,da\, dm.
\end{multline*}
Substituting $n'^{-1}n$ for $n$,  we rewrite this as
\begin{multline} \label{eq inner prod 2}
\int_{NAM} \int_Y \int_{NAM} \int_Y 
\bigl(
 \sigma(n'a'm'y'), n' n
\tilde \kappa(n^{-1}a'm'y', amy)
s(amy)
 \bigr)_E 
 \\
dy'\, dn'\,da'\, dm'\, 
dy\, dn\,da\, dm. 
\end{multline}
If $\tilde \kappa^N$ is related to $\kappa^N$ as in \eqref{eq kappa kappa tilde}, with $G$ replaced by $MA$, then for all $y,y' \in Y$, $a,a' \in A$ and $m,m' \in M$, unimodularity of $N$ implies that
\[
\int_N n
\tilde \kappa(n^{-1}a'm'y', amy)\, dn = \tilde \kappa^N(a'm'y', amy).
\]
So \eqref{eq inner prod 2} equals
\[
 \int_{NAM} \int_Y 
\bigl(
 \sigma(n'a'm'y'), n' 
(\tilde \kappa^N s)(a'm'y')
 \bigr)_E 
dy'\, dn'\,da'\, dm'\, 
\]
By $N$-invariance of $\tilde \kappa^N s$, we have $n' 
(\tilde \kappa^Ns)(a'm'y') = 
(\tilde \kappa^Ns)(n'a'm'y')$ for all $y' \in Y$, $n' \in N$, $a' \in A$ and $m' \in M$. So the latter integral equals the right hand side of \eqref{eq inner prod N}.
\end{proof}

Let $\kappa_t \in \cC(E^+)$ be kernel of the heat operator $e^{-tD^-D^+}$. 
\begin{lemma}\label{lem inner products}
For all $\sigma \in \Gamma^{\infty}_c(E^+)$ and 
$s \in \Gamma^{\infty}(E^+)^{N, c}$, and
for all $t>0$,
\[
\frac{d}{dt}(\sigma, \kappa_t^N s)_{L^2(E^+)} = (\sigma, -D^-D^+ e^{-t D^- D^+}  s)_{L^2(E^+)}.
\]
\end{lemma}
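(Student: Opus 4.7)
The approach is to apply Lemma~\ref{lem inner prod N} twice, sandwiching a routine differentiation in $t$: first to transfer $\kappa_t^N$ off of $s$ so that it acts on $\sigma \in L^2(E^+)$, where the heat equation is at our disposal, and then to transfer the resulting operator back onto the second slot. The key input throughout is Proposition~\ref{prop fD2}, which guarantees that the kernels in question lie in $\cC(E^+)$ and depend smoothly on $t$.

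First I would note that $\kappa_t \in \cC(E^+)$ (Proposition~\ref{prop fD2} with $f(x) = e^{-tx}$) and that $\kappa_t^* = \kappa_t$ as operators, since $e^{-tD^-D^+}$ is self-adjoint. Lemma~\ref{lem inner prod N}, applied with $E$ replaced by $E^+$, then gives
\[
(\sigma, \kappa_t^N s)_{L^2(E^+)} = (\kappa_t \sigma, s)_{L^2(E^+)} = (e^{-tD^-D^+}\sigma, s)_{L^2(E^+)}.
\]
Next I would differentiate in $t$. The $t$-derivative of the kernel is $\mu_t := -D^-D^+ \kappa_t$, which also lies in $\cC(E^+)$ by Proposition~\ref{prop fD2} applied to $f(x) = -x e^{-tx}$, and $t \mapsto \kappa_t$ is smooth into $\cC(E^+)$. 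Because $\sigma$ has compact support while $s$ has compact image in $X/N$, the Schwartz-type decay of $\kappa_t$ and $\mu_t$ along $G$-orbits dominates the integrand and its $t$-derivative by integrable functions on $X$, uniformly for $t$ in compact subintervals of $(0,\infty)$. This justifies differentiating under the integral, yielding
\[
\tfrac{d}{dt}(e^{-tD^-D^+}\sigma, s)_{L^2(E^+)} = (\mu_t \sigma, s)_{L^2(E^+)}.
\]

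Finally I would apply Lemma~\ref{lem inner prod N} a second time, now to the kernel $\mu_t$. Since $-D^-D^+$ and $e^{-tD^-D^+}$ are commuting self-adjoint operators, $\mu_t^* = \mu_t$, and the lemma gives
\[
(\mu_t \sigma, s)_{L^2(E^+)} = (\sigma, \mu_t^N s)_{L^2(E^+)}.
\]
The pairing on the right is the intended meaning of $(\sigma, -D^-D^+ e^{-tD^-D^+} s)_{L^2(E^+)}$ for $s \in \Gamma^{\infty}(E^+)^{N,c}$, and chaining the three displays yields the claim. The main obstacle I anticipate is the justification of exchanging $d/dt$ with the non-$L^2$ pairing $(\,\cdot\,, s)$, since $s$ need not be square-integrable; this is exactly where Proposition~\ref{prop fD2} earns its keep, providing simultaneous Schwartz-type control on both $\kappa_t$ and its $t$-derivative $\mu_t$ so that, in the slice decomposition $X = G \times_K Y$, one obtains the required integrable majorants.
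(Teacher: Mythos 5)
Your argument is correct, but it takes a genuinely different route from the paper at the key step. Both proofs start the same way: apply Lemma~\ref{lem inner prod N} with $\kappa = \kappa_t$ to obtain $(\sigma, \kappa_t^N s) = (e^{-tD^-D^+}\sigma, s)$, then differentiate in $t$ to get $(-D^-D^+e^{-tD^-D^+}\sigma, s)$. Where the paper then runs a cutoff argument --- introducing $\psi_r \in C^\infty_c$ equal to $1$ near $\supp(\sigma)$, splitting the pairing into a compactly-supported piece plus an error term, and using Gaussian decay of heat kernels to show the error is $< \varepsilon\|s\|_\infty$ for $r$ large --- you instead observe that $\mu_t := -D^-D^+\kappa_t$ is again an element of $\cC(E)$ by Proposition~\ref{prop fD2} and $\mu_t^* = \mu_t$, so a second application of Lemma~\ref{lem inner prod N} transfers the kernel to the $s$-slot in one stroke: $(\mu_t\sigma, s) = (\sigma, \mu_t^N s)$. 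This is cleaner and reuses the machinery already built; the paper's route is more elementary in the sense that it only invokes Lemma~\ref{lem inner prod N} once and replaces the second invocation with a direct decay estimate.

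Two small points you should make explicit if writing this up. First, $f(x) = -xe^{-tx}$ is not literally Schwartz on $\R$ (it blows up as $x \to -\infty$); since $D^2 \geq 0$ only $f|_{[0,\infty)}$ matters, so one should note that $f$ can be replaced by a Schwartz function agreeing with it on $[0,\infty)$, after which Proposition~\ref{prop fD2} applies. (The paper implicitly does the same for $e^{-tx}$ in Lemma~\ref{lem hk SE}, so this is a standard reading.) Second, your closing sentence asserts that $\mu_t^N s$ \emph{is} the intended meaning of $-D^-D^+ e^{-tD^-D^+} s$; this is true, but it deserves a line of justification, e.g.\ noting that $\kappa_t^N s = \kappa_t\cdot s$ as improper kernel operators on $N$-invariant sections by a dominated-convergence/cutoff argument as in Lemma~\ref{lem inner prod N}, and $\mu_t^N = -D^-D^+\kappa_t^N$ since differentiation in the first variable commutes with the $N$-integration by $N$-invariance of $D$. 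The subsequent Lemma~\ref{lem kappa t N} uses exactly this identification, so pinning it down here avoids circularity. Finally, the justification of differentiating under the pairing --- you gesture at uniform integrable majorants from Proposition~\ref{prop fD2} --- is at the same level of rigor as the paper, which also states the derivative $(-D^-D^+e^{-tD^-D^+}\sigma, s)$ without detailed estimate, so no complaint there.
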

\begin{proof}
By Lemma \ref{lem inner prod N}, 
\[
(\sigma, \kappa_t^N s)_{L^2(E^+)} = (e^{-t D^- D^+} \sigma, s)_{L^2(E^+)}.
\]
The derivative with respect to $t$ at $t$ equals
\beq{eq inner prod 1}
(-D^-D^+ e^{-t D^- D^+} \sigma, s)_{L^2(E^+)}.
\eeq
The operator $-D^-D^+ e^{-t D^- D^+}$ is symmetric on sections in $L^2(E^+)$, so we have to take the fact into account that $s$ is not in $L^2(E^+)$.

Let $\varepsilon > 0$. For $r>0$, let $\psi_r \in C^{\infty}_c(M)$ be constant $1$ within a distance $r$ of $\supp(\sigma)$. Then \eqref{eq inner prod 1} equals
\[
\bigl(-D^-D^+ e^{-t D^- D^+} \sigma, \psi_r s\bigr)_{L^2(E^+)}  -\bigl( (1-\psi_r)D^-D^+ e^{-t D^- D^+} \sigma, s\bigr)_{L^2(E^+)}.
\]
Because $\sigma$ and $\psi_r s$ lie in $\Gamma^{\infty}_c(E^+)$, the first term equals 
$( \sigma, -D^-D^+ e^{-t D^- D^+} \psi_r s)_{L^2(E^+)} $.  And the absolute value of the second term is at most equal to 
\[
\|(1-\psi_r)D^-D^+ e^{-t D^- D^+} \sigma\|_{L^1(E^+)} \|s\|_{\infty}.
\]
The first factor is smaller than $\varepsilon$ for $r$ large enough by Gaussian decay properties of heat kernels. (Here we also use the fact that $X/G$ is compact, so $X$ has bounded geometry and volumes of balls in $X$ are bounded by an exponential function of their radii.) For such a value of $r$, we find that
\[
\bigl| (-D^-D^+ e^{-t D^- D^+} \sigma, s)_{L^2(E^+)} - ( \sigma, -D^-D^+ e^{-t D^- D^+} \psi_r s)_{L^2(E^+)}\bigr| < \varepsilon \|s\|_{\infty}.
\]
We similarly have
\begin{multline*}
\bigl|  ( \sigma, -D^-D^+ e^{-t D^- D^+} s)_{L^2(E^+)} - 
  ( \sigma, -D^-D^+ e^{-t D^- D^+} \psi_r s)_{L^2(E^+)} \bigr|\\
  = 
 |  \bigl( (1-\psi_r)  D^-D^+ e^{-t D^- D^+} \sigma, s\bigr)_{L^2(E^+)} |< \varepsilon \|s\|_{\infty}.
\end{multline*}
We conclude that \eqref{eq inner prod 1} equals $ ( \sigma, -D^-D^+ e^{-t D^- D^+} s)_{L^2(E^+)}$, and the claim follows.
\end{proof}


\subsection{An index on $X/N$}\label{sec index XN}

We have a $K \cap M$-invariant decomposition
\[
\kp = (\kp \cap \km) \oplus \ka \oplus ((\kp \cap \km) \oplus \ka)^{\perp},
\]
where the orthogonal complement is taken inside $\kp$ with respect to 
the Killing form. 
The homomorphim $\widetilde{\Ad}$ maps $K \cap M$ into $\Spin(\kp \cap \km) \times \Spin(\ka) \times \Spin(((\kp \cap \km) \oplus \ka)^{\perp}) \hookrightarrow \Spin(\kp)$. 
Let $S_{\kp \cap \km}$,\label{pag Spm} $S_{\ka}$\label{pag Sa} and 
 $S_{ ((\kp \cap \km) \oplus \ka)^{\perp}}$ be the corresponding $\Spin$-representations of $K \cap M$. (The group $K \cap M$ acts trivially on $S_{\ka}$, because $M$ centralises $A$.) Then we have a decomposition of  representations of $K \cap M$,
\beq{eq decomp Sp}
S_{\kp} = S_{\kp \cap \km} \otimes S_{\ka} \otimes S_{ ((\kp \cap \km) \oplus \ka)^{\perp}}.
\eeq
If $G/K$ is even-dimensional, then this includes the gradings on the respective spaces, and we use graded tensor products. 

\begin{lemma} \label{lem S perp}
There is an isomorphism of graded representations of $K \cap M$
\[
S_{ ((\kp \cap \km) \oplus \ka)^{\perp}} \cong S_{\kk/(\kk \cap \km)}.\label{pag Skm}
\]
\end{lemma}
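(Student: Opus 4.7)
The plan is to produce a $K\cap M$-equivariant linear isometry between $((\kp\cap\km)\oplus\ka)^\perp$ and $\kk/(\kk\cap\km)$, and then invoke naturality of the spin functor.

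First I would give explicit models for both spaces. The vector-space decomposition $\kg=\kn\oplus\km\oplus\ka\oplus\theta\kn$ (where $\theta$ is the Cartan involution), combined with the $\theta$-stability of $\km$ and $\ka$ (with $\theta|_\ka=-\mathrm{id}$), yields, by projection onto $\kp$ and $\kk$, the $B$-orthogonal decompositions
\[
\kp = (\kp\cap\km)\oplus\ka\oplus\kp_\kn,\qquad \kk = (\kk\cap\km)\oplus\kk_\kn,
\]
where $\kp_\kn:=\{X-\theta X : X\in\kn\}$ and $\kk_\kn:=\{X+\theta X : X\in\kn\}$. In particular $((\kp\cap\km)\oplus\ka)^\perp=\kp_\kn$, and $\kk/(\kk\cap\km)\cong\kk_\kn$ canonically as $K\cap M$-representations.

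Next I would define $\phi\colon\kp_\kn\to\kk_\kn$ by $X-\theta X\mapsto X+\theta X$. This is well defined and bijective since $X\mapsto X\pm\theta X$ are injective on $\kn$ (the kernels lie in $\kn\cap\theta\kn=0$, as $\kn$ and $\theta\kn$ are sums of restricted root spaces for disjoint sets of roots). Equivariance under $K\cap M$ is automatic: $M$ normalises $N$, so $\Ad(K\cap M)$ preserves $\kn$, and $\theta$ commutes with $K$. To verify that $\phi$ is an isometry, I would use $B|_{\kn\times\kn}=0$ and the $\theta$-invariance of $B$ to compute
\[
B(X-\theta X,\,Y-\theta Y)=-2B(X,\theta Y)=-B(X+\theta X,\,Y+\theta Y)
\]
for all $X,Y\in\kn$, which matches the inner products $B|_\kp$ and $-B|_\kk$ used to build the respective $\Spin$-groups.

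Finally, the ordered bases $\{X_i-\theta X_i\}$ of $\kp_\kn$ and $\{X_i+\theta X_i\}$ of $\kk_\kn$ coming from a chosen ordered basis $\{X_i\}$ of $\kn$ induce compatible orientations making $\phi$ orientation-preserving, so $\phi$ lifts to a graded isomorphism $\Spin(\kp_\kn)\simeq\Spin(\kk_\kn)$ and hence to $S_{\kp_\kn}\simeq S_{\kk_\kn}$ respecting the $\Z/2$-grading. The main point to verify is that the $K\cap M$-action on $S_{((\kp\cap\km)\oplus\ka)^\perp}$ extracted from $\widetilde{\Ad}$ through the decomposition \eqref{eq decomp Sp} corresponds, under $\phi$, to the analogously lifted action on $S_{\kk_\kn}$; this is where one must be careful, but it follows because both actions are ultimately determined by $\Ad(K\cap M)$ acting on $\kn$, transported through the $K\cap M$-equivariant isometries $\kn\cong\kp_\kn\cong\kk_\kn$.
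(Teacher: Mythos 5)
Your proof is correct, and it takes a genuinely more direct route than the paper's. The paper constructs the chain of isomorphisms $\kp \cong \kg/\kk \cong (\kp\cap\km)\oplus\ka\oplus\kn$ and the map $y\mapsto(y+\theta y)/2+\kk\cap\km$ from $\kn$ onto $\kk/(\kk\cap\km)$, but only records these as \emph{$T\cap M$-equivariant}; it then invokes an indirect character argument ("their characters are equal on $K\cap M$, so they are equal as representations of $K\cap M$") to upgrade the conclusion, and leaves the passage from a vector-space isomorphism to an isomorphism of $\Spin$-representations implicit. You instead exhibit the explicit isometry $X-\theta X\mapsto X+\theta X$ between $\kp_\kn:=((\kp\cap\km)\oplus\ka)^\perp$ and $\kk_\kn\cong\kk/(\kk\cap\km)$, verify $K\cap M$-equivariance directly (using that $\theta$ commutes with $\Ad(K)$ and that $K\cap M$ normalises $\kn$), check that the two $\Ad$-invariant inner products $B|_\kp$ and $-B|_\kk$ are matched via the computation $B(X-\theta X,Y-\theta Y)=-2B(X,\theta Y)=-B(X+\theta X,Y+\theta Y)$, and then lift through the spin functor. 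What your approach buys is a concrete $K\cap M$-equivariant map, with the isometry and orientation bookkeeping made visible, which is exactly what one needs to pass cleanly to the graded $\Spin$-representations; what the paper's character argument buys is the ability to avoid those verifications, since for compact group representations the character already determines the isomorphism class. Both are valid; yours is self-contained and closer to a "by hand" proof, while the paper's is shorter at the cost of leaving the Spin-level upgrade implicit. The one place you should tighten up is the final paragraph: rather than constructing new orientations from ordered bases of $\kn$, you should specify the orientations (and hence gradings) that are actually in force on $\kp_\kn$ and $\kk/(\kk\cap\km)$ coming from the decomposition \eqref{eq decomp Sp} and the construction of $F_N$ and $F_{AN}$ in \eqref{eq SN FN} and \eqref{eq SAN FAN}, and check that $\phi$ respects those; otherwise you have only shown the two graded $\Spin$-modules are isomorphic up to a possible flip of the grading, which is the same level of precision as the paper's character argument.
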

\begin{proof}
There are $T \cap M$-equivariant linear isomorphisms
\[
\kp \cong \kg/\kk \cong \km/(\kk \cap \km) \oplus \ka \oplus \kn \cong (\kp \cap \km) \oplus \ka \oplus \kn. 
\]
If $\theta$ is the Cartan involution, then the map $y\mapsto (y+\theta y)/2 + \kk \cap \km$ is a $T \cap M$-invariant isomorphism from $\kn$ onto $\kk/(\kk \cap \km)$. So $((\kp \cap \km) \oplus \ka)^{\perp} \cong \kk/(\kk \cap \km)$ as representations of $T \cap M$. This implies that their characters are equal on $K \cap M$, so they are equal as representations of $K \cap M$ as well.
\end{proof}

Consider the $MA$-equivariant vector bundles\label{pag SN}\label{pag FN}
\beq{eq SN FN}
\begin{split}
S_N &:= MA \times_{K \cap M} (S_{\kp \cap \km} \otimes S_{\ka} \otimes S_Y) \to X_{MA} = X/N; \\
F_N &:= MA \times_{K \cap M} (Y \times S_{\kk/(\kk \cap \km)}) \to X_{MA} = X/N.
\end{split}
\eeq
\begin{lemma}\label{lem SN Spinc}
The vector bundle $S_N$ is the spinor bundle of an $MA$-equivariant $\Spinc$-structure on $X/N$, and \eqref{eq SN} holds.
\end{lemma}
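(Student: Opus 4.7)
The plan is to build $S_N$ from a Cartan-style decomposition of $\kp$ and the slice $\Spinc$-structure. First I identify $X/N \cong X_{MA}$ and compute
\[
T(X/N) \cong MA \times_{K \cap M}\bigl((\kp \cap \km) \oplus \ka \oplus TY\bigr),
\]
where the Cartan decomposition $\km = (\kk \cap \km) \oplus (\kp \cap \km)$ is used to remove the $\kk \cap \km$ summand from the $MA$ tangent space. The group $K \cap M$ acts on the three summands by the restricted adjoint action on $\kp \cap \km$, trivially on $\ka$ (since $M$ centralises $A$), and by the given action on $TY$.

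Next I would equip the model fibre $S_{\kp \cap \km} \otimes S_{\ka} \otimes S_Y|_y$ with the natural graded Clifford action assembled from the three factor Clifford modules, so that it becomes a Clifford module for $(\kp \cap \km) \oplus \ka \oplus T_y Y$. The key input, already noted before \eqref{eq decomp Sp}, is that $\widetilde{\Ad}$ maps $K \cap M$ into the block subgroup $\Spin(\kp \cap \km) \times \Spin(\ka) \times \Spin(((\kp \cap \km) \oplus \ka)^{\perp})$ of $\Spin(\kp)$. This makes the $K \cap M$-action on $S_{\kp \cap \km}$ cover its adjoint action on $\kp \cap \km$ and makes the action on $S_{\ka}$ trivial (matching the trivial action on $\ka$); combined with the $K$-equivariance of $S_Y \to Y$ this shows the Clifford action is $K \cap M$-equivariant. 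Hence $S_N = MA \times_{K \cap M}(S_{\kp \cap \km} \otimes S_{\ka} \otimes S_Y)$ becomes the spinor bundle of an $MA$-equivariant $\Spinc$-structure on $X/N$.

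For \eqref{eq SN}, I would restrict $S = G \times_K(S_{\kp} \otimes S_Y)$ from \eqref{eq S Y} to $X_{MA}$, obtaining $S/N \cong MA \times_{K \cap M}(S_{\kp} \otimes S_Y)$. Combining \eqref{eq decomp Sp} with Lemma \ref{lem S perp} yields a $K \cap M$-equivariant isomorphism
\[
S_{\kp} \otimes S_Y \cong (S_{\kp \cap \km} \otimes S_{\ka} \otimes S_Y) \otimes S_{\kk/(\kk \cap \km)},
\]
and applying $MA \times_{K \cap M}(-)$ produces $S_N \otimes F_N$ as required.

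The main obstacle in this plan is the bookkeeping of the various $K \cap M$-actions and gradings on the factors of $S_{\kp} \otimes S_Y$, and checking that the block embedding of $\widetilde{\Ad}(K \cap M)$ really does upgrade $S_N$ to a genuine $\Spinc$ spinor bundle rather than merely a Clifford module bundle; beyond this the lemma is a matter of tracing definitions through Abels' slice presentation.
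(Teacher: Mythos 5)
Your approach is essentially the paper's. Both identify $T(X/N) \cong MA \times_{K\cap M}((\kp\cap\km)\oplus\ka\oplus TY)$, build the Clifford action on $S_N$ from the three factor modules using the block structure of $\widetilde{\Ad}$, and obtain \eqref{eq SN} by combining \eqref{eq decomp Sp}, Lemma \ref{lem S perp} and \eqref{eq S Y}. The one thing you flag but leave open --- whether the constructed Clifford module bundle is genuinely a $\Spinc$ spinor bundle and not a reducible Clifford module --- is exactly the point the paper's proof settles, and it settles it succinctly: under the algebra bundle isomorphism $\Cl(TX) \to \End(S)$ coming from the $\Spinc$-structure on $X$, the subbundle $\Cl(TX_{MA}) \subset \Cl(TX)|_{X_{MA}}$ maps \emph{onto} $\End(S_N)$. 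Surjectivity onto the full endomorphism algebra is precisely the irreducibility criterion that characterizes a spinor bundle. Concretely, in your direct construction this follows because $S_{\kp\cap\km}$, $S_\ka$ and $S_Y$ are each irreducible modules over the complexified Clifford algebras of their respective factors, and $\Cl(V\oplus W)\otimes\C \cong (\Cl(V)\otimes\C)\hat\otimes(\Cl(W)\otimes\C)$, so the graded tensor product is again irreducible; adding that observation would close the gap you noted and make your proof complete.
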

\begin{proof}
Clifford multiplication is an algebra bundle isomorphism $\Cl(TX) \to \End(S)$, where $\Cl$ stands for the  Clifford bundle. The Clifford bundle
\[
\Cl(TX_{MA}) = \Cl(MA \times_{K \cap M} ( \kp \cap \km \oplus \ka \oplus TY))
\]
maps precisely onto $\End(S_N)$ under this isomorphism. So $S_N$ is the spinor bundle of an $MA$-equivariant $\Spinc$-structure on $X_{MA}$. Lemma \ref{lem S perp} and \eqref{eq S Y} imply \eqref{eq SN}.
\end{proof}
The equality \eqref{eq SN} implies that 
\beq{eq EXMA}
E|_{X_{MA}} = MA \times_{K \cap M} \bigl((S_{\kp \cap \km}  \otimes S_{\ka}) \otimes (S_Y \otimes S_{\kk/(\kk \cap \km)}  \otimes W|_Y)\bigr).
\eeq

Let $\{X_1, \ldots, X_s\}$ be a basis of $(\kp \cap \km) \oplus \ka$, orthonormal with respect to the Killing form. Let $L$ be the left regular representation of $MA$.
Let\label{pag DMAK} 
\beq{eq Dirac MA}
D_{MA/(K \cap M)} := \sum_{j} L(X_j) \otimes c(X_j) 
\eeq
be the $\Spin$-Dirac operator on the bundle 
\beq{eq SMA}
MA \times_{K \cap M} (S_{\kp \cap \km} \otimes S_{\ka}) \to MA/(K \cap M).
\eeq
It extends to an operator $D_{MA, K \cap M}$\label{pag DMAKM}  on 
\[
 C^{\infty}(MA) \otimes (S_{\kp \cap \km}  \otimes S_{\ka}) ,
\]
defined by the same expression \eqref{eq Dirac MA}.

 Let $D_{Y,M}$\label{pag DYM}  be the $\Spinc$-Dirac operator on $Y$ coupled to $S_{\kk/(\kk \cap \km)} \otimes W|_Y$, acting on sections of the bundle
\[
S_Y \otimes S_{\ka} \otimes S_{\kk/(\kk \cap \km)} \otimes  W|_Y \to Y.
\]
Consider the operator\label{pag DXMA}
\beq{eq DXMA}
D_{X_{MA}} := D_{MA, K \cap M} \otimes 1 + 1 \otimes D_{Y,M},
\eeq
where we use graded tensor products; i.e.\ $1 \otimes D_{Y,M}$ means the grading operator on \eqref{eq SMA} tensored with $D_{Y,M}$.
Initially, it acts in the space
\[
C^{\infty}(MA) \otimes (S_{\kp \cap \km}  \otimes S_{\ka}) \otimes \Gamma^{\infty}(S_Y \otimes S_{\kk/(\kk \cap \km)} \otimes W|_Y),
\]
but we view it as acting on the subspace of $K \cap M$-invariant elements, which is the space of smooth sections of  \eqref{eq EXMA}. Then $D_{X_{MA}}$ is a $\Spinc$-Dirac operator on $X_{MA}$ for the $\Spinc$-structure with spinor bundle $S_N$, 
  twisted by the vector bundle $W_N := F_N \otimes W/N$.

As before, let $\kappa_t \in \cC(E^+)$ be Schwartz kernel of the heat operator $e^{-tD^-D^+}$, and let $\kappa_t^N$ be defined as in \eqref{eq kappa N}. 
\begin{lemma} \label{lem kappa t N}
The Schwartz kernel of $e^{-t D^-_{X_{MA}} D^+_{X_{MA}}  }$ is  $\kappa_t^N$. The analogous statement for $e^{-tD^+_{X_{MA}} D^-_{X_{MA}}}$ holds as well.
\end{lemma}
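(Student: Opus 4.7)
The plan is to apply uniqueness for the heat equation on $X_{MA}$. Both sides of the claim define operators on $\Gamma^\infty_c(E^+|_{X_{MA}})$, so it suffices to check that, for each $\tilde s \in \Gamma^\infty_c(E^+|_{X_{MA}})$ extended to $s \in \Gamma^\infty(E^+)^{N,c}$ by $N$-invariance, the section $u(t) := \kappa_t^N s$ solves $\partial_t u = -D^-_{X_{MA}} D^+_{X_{MA}} u$ with $u(0) = \tilde s$. Because $X/G$ is compact, $X_{MA}$ has bounded geometry and the Cauchy problem has a unique smooth solution; the desired equality of kernels follows.

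The crux is the compatibility statement: under the identification $\Gamma^\infty(E)^{N,c} \cong \Gamma^\infty_c(E|_{X_{MA}})$, the restriction of $D^-D^+$ to $N$-invariant sections coincides with $D^-_{X_{MA}} D^+_{X_{MA}}$. Geometrically this says that $D$ descends to $D_{X_{MA}}$ along the Riemannian submersion $X \to X/N$, whose fibers are the free $N$-orbits. I would establish it by comparing \eqref{eq DGK DY} with \eqref{eq DXMA} after choosing an orthonormal basis of $\kp$ adapted to the splitting $\kp = (\kp \cap \km) \oplus \ka \oplus V$ with $V := ((\kp \cap \km) \oplus \ka)^\perp$. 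The extra terms in $D_{G,K}$ beyond $D_{MA, K \cap M}$ are $\sum_j L(V_j) \otimes c(V_j)$. On $N$-invariant, $K$-equivariant functions on $G$ the derivative part is absorbed (using the Cartan-involution identification of $V$ with $\kn$ supplied by the proof of Lemma \ref{lem S perp}, together with left $N$-invariance), while the Clifford factors $c(V_j)$ act on the $S_{((\kp \cap \km) \oplus \ka)^\perp} \cong S_{\kk/(\kk \cap \km)}$ summand of $S_\kp$ and reassemble, together with $D_Y$, into precisely the twisted Dirac operator $D_{Y,M}$ appearing in \eqref{eq DXMA}.

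Once this compatibility is in hand, pairing against any $\sigma \in \Gamma^\infty_c(E^+)$ on $X$ and combining Lemma \ref{lem inner products}, Lemma \ref{lem inner prod N} and formal self-adjointness of $D^-D^+$ yields
\[
\frac{d}{dt}(\sigma, u(t))_{L^2(E^+)} = -(\sigma, D^-_{X_{MA}} D^+_{X_{MA}} u(t))_{L^2(E^+)}.
\]
Since this holds for arbitrary $\sigma$ and $u(t)$ is an $N$-invariant smooth section, we obtain $\partial_t u = -D^-_{X_{MA}} D^+_{X_{MA}} u$ on $X_{MA}$. The initial condition $u(0^+) = \tilde s$ follows from the standard short-time limit $\kappa_t \ast \sigma \to \sigma$ for $\sigma \in \Gamma^\infty_c(E^+)$ combined with the $N$-averaging built into $\kappa_t^N$. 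Exchanging $D^+ \leftrightarrow D^-$ throughout gives the analogous kernel formula for $e^{-tD^+_{X_{MA}} D^-_{X_{MA}}}$.

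The main obstacle is the compatibility step in the second paragraph: although conceptually it is just the statement that $D$ descends to $D_{X_{MA}}$ under the submersion $X \to X/N$, pinning down how the Clifford module structure on $S_\kp = S_{\kp \cap \km} \otimes S_\ka \otimes S_{((\kp \cap \km) \oplus \ka)^\perp}$ interacts with $K$-equivariance and with the Lie-algebra identifications from Lemma \ref{lem S perp} to produce exactly $D_{Y,M}$ (rather than a bare $D_Y$ with no $S_{\kk/(\kk \cap \km)}$ twist) requires careful bookkeeping; everything else is standard heat-equation uniqueness.
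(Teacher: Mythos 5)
Your overall strategy is the same as the paper's: use Lemma \ref{lem inner prod N} and Lemma \ref{lem inner products} to show that $t\mapsto\kappa_t^N s$ solves the heat equation for $D_{X_{MA}}^-D_{X_{MA}}^+$ with the right initial condition, then invoke uniqueness. The one step the paper dispatches in a single sentence (``On $N$-invariant sections, $D$ equals $D_{X_{MA}}$'') is exactly the step you flag as the crux, and this is where your sketch has a concrete gap.

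The mechanism you propose for the derivative part does not work as stated. Comparing \eqref{eq DGK DY} with \eqref{eq DXMA}, the extra first-order terms are $\sum_j L(V_j)\otimes c(V_j)$ with $V_j$ running over a basis of $V:=((\kp\cap\km)\oplus\ka)^\perp$. You claim these are ``absorbed\dots using the Cartan-involution identification of $V$ with $\kn$\dots together with left $N$-invariance.'' But $V=\{X-\theta X : X\in\kn\}$ is only \emph{linearly isomorphic} to $\kn$ via $\theta$; it is not $\kn$. On a left-$N$-invariant function, $L(X)$ vanishes for $X\in\kn$, but $L(\theta X)$ does not, so $L(V_j)$ reduces to $-L(\theta X_j)$ (up to normalisation), which is generically nonzero. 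Decomposing $ma\exp(tV_j)$ via $G=NMAK$ and using $N$-invariance and $K$-equivariance of the sections shows this residual term becomes, upon restriction to $X_{MA}$, a zeroth-order Clifford term (a $\rho$-type shift), not zero. So closing the compatibility step requires either tracking this zeroth-order contribution and identifying it with the difference between the induced connection on $S|_{X_{MA}}$ and the product connection implicit in $D_{X_{MA}}$, or some other identity that you have not supplied. Since the paper itself leaves this step unjustified, the concern is as much about the source as about your proposal; but the specific cancellation you invoke for the derivative part is not correct, and the rest of your argument cannot proceed until that is repaired.
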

\begin{proof}
%
%
%
%
The operator $D$ commutes with the action of $\kappa^N_t$ on $\Gamma^{\infty}(E^+)^{N, c}$ because it is $N$-equivariant. So Lemmas \ref{lem inner products} and \ref{lem inner prod N} imply that for all $\sigma \in \Gamma^{\infty}_c(E^+)$ and 
$s \in \Gamma^{\infty}(E^+)^{N, c} \cong \Gamma^{\infty}_c(E^+|_{X_{MA}})$, and all $t>0$,
\[
\begin{split}
\frac{d}{dt}(\sigma, \kappa_t^N s)_{L^2(E^+)} &= (e^{-t D^- D^+}  \sigma, -D^-D^+  s)_{L^2(E^+)}\\
& = ( \sigma, - \kappa_t^N D^-D^+  s)_{L^2(E^+)} \\
& = ( \sigma, -  D^-D^+  \kappa_t^N s)_{L^2(E^+)}. 
\end{split}
\]\
On $N$-invariant sections, $D$ equals $D_{X_{MA}}$, so the latter inner product equals
$( \sigma, -  D_{X_{MA}}^-D_{X_{MA}}^+  \kappa_t^N s)_{L^2(E^+)}$.


Lemma \ref{lem inner prod N}  also implies that 
\[
\lim_{t\downarrow 0}(\sigma, \kappa_t^N s)_{L^2(E^+)}  = (\sigma, s)_{L^2(E^+)}. 
\]
We conclude that the map $(p, t)\mapsto (\kappa_t^N s)(p)$ satisfies the heat equation for the operator $D_{X_{MA}}^-D_{X_{MA}}^+  $, with the boundary condition $\lim_{t \downarrow 0}( \kappa_t^N s) (p)= s(p)$ for all $p \in X_{MA}$. 
%

The statement for $e^{-tD_{X_{MA}}^+D_{X_{MA}}^-}$ can be proved analogously.
\end{proof}

\begin{lemma} \label{lem lambda t N}
Let $\lambda_t \in \cC(E)$ be the smooth kernel of $e^{-\frac{t}{2}D^- D^+} \frac{1-e^{-t D^-D^+}}{D^-D^+} D^-$. Then $\lambda_t^N$ is the Schwartz kernel of $e^{-\frac{t}{2}D_{X_{MA}}^- D_{X_{MA}}^+} \frac{1-e^{-t D_{X_{MA}}^-D_{X_{MA}}^+}}{D_{X_{MA}}^-D_{X_{MA}}^+} D^-_{X_{MA}}$. The analogous statement for $e^{-\frac{t}{2}D^+ D^-} \frac{1-e^{-t D^+D^-}}{D^+D^-} D^+$ holds as well.
\end{lemma}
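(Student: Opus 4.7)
The plan is to reduce Lemma \ref{lem lambda t N} to Lemma \ref{lem kappa t N} via an integral representation of the operator. The functional-calculus identity $\frac{1-e^{-ts}}{s} = \int_0^t e^{-\tau s}\,d\tau$ applied at $s = D^+D^-$, combined with the intertwining $D^- e^{-\tau D^+D^-} = e^{-\tau D^-D^+} D^-$, gives
\[
\lambda_t \;=\; e^{-\frac{t}{2}D^-D^+}\frac{1-e^{-tD^-D^+}}{D^-D^+}D^- \;=\; D^- \int_0^t e^{-(\tau + t/2)D^+D^-}\,d\tau,
\]
with a corresponding identity on the level of smooth kernels on $X$.

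First I would $N$-average both sides. The map $\kappa \mapsto \kappa^N$ is linear, and by Fubini---justified by the Gaussian bounds on the heat kernel on $X$, which has bounded geometry since $X/G$ is compact---it commutes with the time integral over $\tau\in[0,t]$. Applying the second assertion of Lemma \ref{lem kappa t N} at each $s = \tau + t/2$ identifies the $N$-average of the kernel of $e^{-s D^+D^-}$ with the heat kernel of $e^{-s D^+_{X_{MA}}D^-_{X_{MA}}}$ on $X_{MA}$; integrating over $\tau$ then yields the Schwartz kernel of $e^{-\frac{t}{2}D^+_{X_{MA}}D^-_{X_{MA}}}\frac{1-e^{-tD^+_{X_{MA}}D^-_{X_{MA}}}}{D^+_{X_{MA}}D^-_{X_{MA}}}$. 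Next, since $D^-$ is $G$-equivariant, it is in particular $N$-equivariant and preserves $N$-invariant sections, on which it coincides with $D^-_{X_{MA}}$; equivalently, $D^-$ acting on the first kernel variable commutes with $N$-integration in the group variable, i.e.\ $(D^-\circ K)^N = D^-_{X_{MA}}\circ K^N$ whenever both sides are defined. Pulling $D^-$ through the $N$-average and intertwining back on $X_{MA}$ produces the stated form of $\lambda_t^N$.

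The analogous statement for $e^{-\frac{t}{2}D^+D^-}\frac{1-e^{-tD^+D^-}}{D^+D^-}D^+$ follows by the same argument with $D^+$ and $D^-$ swapped and using the first case of Lemma \ref{lem kappa t N}. The main obstacle I expect is the careful justification of the two exchanges---of $N$-integration with the time integral, and of $N$-integration with the first-order operator $D^-$---and verifying that each intermediate object is represented by a genuine smooth kernel on $X_{MA}$ rather than only a distributional one. These checks are routine consequences of the smoothness and Gaussian decay of $\kappa_t$ already invoked in Lemma \ref{lem kappa t N}, but they are the only places where any real care is needed.
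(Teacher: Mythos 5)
Your proposal is correct, but it proceeds by a genuinely different route than the paper's proof. The paper fixes $t_0\in(0,t)$, factors $\lambda_t$ as the product of the heat kernel $\kappa_{t-t_0}$ of $e^{-\frac{t-t_0}{2}D^-D^+}$ and the kernel $\nu_t$ of $e^{-\frac{t_0}{2}D^-D^+}\frac{1-e^{-tD^-D^+}}{D^-D^+}D^-$, establishes the statement for $\nu_t^N$ by an ODE-in-$t$ argument modelled on the one used for Lemma~\ref{lem kappa t N} (via Lemmas~\ref{lem inner prod N} and~\ref{lem inner products}), and then combines the two factors using multiplicativity of $\kappa\mapsto\kappa^N$ (which relies on $N$ normalising $MA$). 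You instead use the functional identity $\frac{1-e^{-ts}}{s}=\int_0^t e^{-\tau s}\,d\tau$ together with the intertwining $D^-e^{-\tau D^+D^-}=e^{-\tau D^-D^+}D^-$, reducing $\lambda_t$ to a $\tau$-integral of heat kernels at times $\tau+t/2\geq t/2>0$, to each of which Lemma~\ref{lem kappa t N} applies directly. What your route buys is that it avoids both the auxiliary parameter $t_0$ and the separate ODE argument for $\nu_t$, and it does not need multiplicativity of $\kappa\mapsto\kappa^N$. What it costs is a separate justification of the exchange $(D^-\circ K)^N=D^-_{X_{MA}}\circ K^N$, which you flag but do not carry out; note that this can be made precise exactly with Lemma~\ref{lem inner prod N} and the adjoint relation $(D^-K)^*=K^*D^+$, writing $(\lambda^*\sigma,s)=(K^*D^+\sigma,s)=(D^+\sigma,K^Ns)=(\sigma,D^-_{X_{MA}}K^Ns)$ for $\sigma$ compactly supported and $s$ an $N$-invariant section with cocompact support. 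With that step spelled out, your argument is complete and perhaps slightly cleaner, since the only analytic input beyond Lemma~\ref{lem kappa t N} is the Fubini exchange over the compact $\tau$-interval, where the uniform $\cC(E)$-bounds on the integrands $e^{-(\tau+t/2)D^+D^-}$ (coming from $\tau+t/2\geq t/2$) make the justification routine.
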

\begin{proof}
Fix $t_0 \in (0,t)$. As in Lemma \ref{lem hk SE}, the Schwartz kernel $\nu_t$ of the operator 
\[
 e^{-\frac{t_0}{2}D^- D^+} \frac{1-e^{-t D^-D^+}}{D^-D^+} D^-
\]
lies in $\cC(E)$.
%
 %
 Similarly to the proof of Lemma \ref{lem kappa t N}, we use Lemmas \ref{lem inner prod N} and \ref{lem inner products}  to find that for all $\sigma \in \Gamma^{\infty}_c(E^+)$ and 
$s \in \Gamma^{\infty}(E^-)^{N, c}$,
\[
\begin{split}
\frac{d}{dt} (\sigma, \nu_t^N s)_{L^2(E^+)} &= 
\frac{d}{dt} \Bigl(
 D^+ \frac{1-e^{-t D^-D^+}}{D^-D^+} e^{-\frac{t_0}{2}D^-D^+}
\sigma,  s \Bigr)_{L^2(E^-)} \\
&= ( e^{-\frac{t_0}{2}D^-D^+} e^{-tD^- D^+} D^+ 
\sigma,  s)_{L^2(E^-)} \\
&= (
\sigma,  e^{-\frac{t_0}{2}D^-D^+} e^{-tD^- D^+} D^- s)_{L^2(E^+)} \\
&= (\sigma, e^{-\frac{t_0}{2}D_{X_{MA}}^-D_{X_{MA}}^+} e^{-tD_{X_{MA}}^- D_{X_{MA}}^+} D_{X_{MA}}^- s)_{L^2(E^+)}.
\end{split}
\]
So $\nu_t^N$ has the defining properties of the operator 
\[
e^{-\frac{t_0}{2}D_{X_{MA}}^-D_{X_{MA}}^+} \frac{1-e^{-t D_{X_{MA}}^- D_{X_{MA}}^+} } 
{D_{X_{MA}}^-D_{X_{MA}}^+} D_{X_{MA}}^-.
\] 

Lemma \ref{lem kappa t N} implies that if $\kappa_t$ is the Schwartz kernel of $e^{-\frac{t-t_0}{2}D^-D^+}$, then $\kappa_t^N$ is the Schwartz kernel of $e^{-\frac{t-t_0}{2}D_{X_{MA}}^-D_{X_{MA}}^+}$.
Because $N$ normalises $M$ and $A$, the map $\kappa \mapsto \kappa^N$ is multiplicative. 
So 
$
\lambda_t^N = \kappa_t^N \nu_t^N, 
$
which by the preceding arguments is the Schwartz kernel of $e^{-\frac{t}{2}D_{X_{MA}}^- D_{X_{MA}}^+} \frac{1-e^{-t D_{X_{MA}}^-D_{X_{MA}}^+}}{D_{X_{MA}}^-D_{X_{MA}}^+} D^-_{X_{MA}}$.

The  statement for $e^{-\frac{t}{2}D^+ D^-} \frac{1-e^{-t D^+D^-}}{D^+D^-} D^+$ can be proved analogously.
\end{proof}

\begin{proposition} \label{prop qtN}
For any cyclic cocycle $\varphi$ over $\cC(MA)$,
\[
\langle \varphi, \ind_{MA}(D_{X_{MA}}) \rangle = \langle \varphi \# \Tr, [q_t^N] - [p_{E|_{X_{MA}}}] 
\rangle.
\]
\end{proposition}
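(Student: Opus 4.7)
The plan is to show that $q_t^N$ is itself the idempotent representative, produced by the formula \eqref{eq def qt} applied to $D_{X_{MA}}$ instead of $D$, of the $MA$-equivariant index class $\ind_{MA}(D_{X_{MA}})$. Once that identification is made, the proposition follows at once from the analogue of \eqref{eq ind qt}, applied this time to the reductive group $MA$ acting properly and cocompactly on $X_{MA} = X/N$ via the $\Spinc$-Dirac type operator $D_{X_{MA}}$ constructed in \eqref{eq DXMA}:
\[
\langle \varphi, \ind_{MA}(D_{X_{MA}}) \rangle = \langle \varphi \# \Tr, [q_t^{X_{MA}}] - [p_{E|_{X_{MA}}}] \rangle,
\]
where $q_t^{X_{MA}}$ denotes the idempotent obtained by putting the functional-calculus expressions for $D_{X_{MA}}$ into the matrix \eqref{eq def qt}.

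First I would verify that $q_t^N$ lies in the unitisation of $\cC(E|_{X_{MA}}) := \cC(MA, \cL^1(L^2(E|_Y)))$. The four matrix entries of $q_t$ lie in the unitisation of $\cC(E)$ by Lemma \ref{lem hk SE}, and Harish-Chandra's result (Lemma 21 in \cite{HC66}) together with the explicit form of $(\cdot)^N$ guarantees that applying $(\cdot)^N$ entry-wise produces an element of the unitisation of $\cC(E|_{X_{MA}})$ with the right $K\cap M \times K\cap M$-equivariance property.

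Next I would identify $q_t^N$ with $q_t^{X_{MA}}$. The diagonal entries $e^{-tD^-D^+}$ and $1_{E^-}-e^{-tD^+D^-}$ of $q_t$ have Schwartz kernels $\kappa_t$ respectively $\delta - \tilde\kappa_t$ (with $\tilde\kappa_t$ the heat kernel of $D^+D^-$), and Lemma \ref{lem kappa t N} states that the $N$-averages of these are precisely the kernels of $e^{-tD_{X_{MA}}^-D_{X_{MA}}^+}$ and $1-e^{-tD_{X_{MA}}^+D_{X_{MA}}^-}$. The off-diagonal entries are handled by Lemma \ref{lem lambda t N} in exactly the same way. Consequently $q_t^N$ and $q_t^{X_{MA}}$ are operators on $L^2(E|_{X_{MA}})$ with the same Schwartz kernels, so they coincide. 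In particular $q_t^N$ is idempotent (this can also be seen directly from multiplicativity of $(\cdot)^N$, noted in the proof of Lemma \ref{lem lambda t N}, combined with $q_t^2 = q_t$).

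The potentially delicate step, and the one I would treat with some care, is the multiplicativity and algebra-membership argument: the map $\kappa \mapsto \kappa^N$ being a homomorphism from $\cC(E)$ to $\cC(E|_{X_{MA}})$ uses crucially that $N$ normalises $MA$ (hence $N$-averaging commutes with convolution in the $MA$-direction) and unimodularity of $N$. Granting this and the identification $q_t^N = q_t^{X_{MA}}$, the pairing equality reduces to
\[
\langle \varphi \# \Tr, [q_t^N] - [p_{E|_{X_{MA}}}] \rangle = \langle \varphi \# \Tr, [q_t^{X_{MA}}] - [p_{E|_{X_{MA}}}] \rangle = \langle \varphi, \ind_{MA}(D_{X_{MA}}) \rangle,
\]
finishing the proof.
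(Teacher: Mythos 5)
Your proof is correct and follows essentially the same route as the paper: use Lemmas \ref{lem kappa t N} and \ref{lem lambda t N} to identify $q_t^N$ with the matrix \eqref{eq def qt} built from $D_{X_{MA}}$, and then invoke the analogue of \eqref{eq ind qt} for $MA$. The additional care you take with algebra membership and multiplicativity of $\kappa \mapsto \kappa^N$ is appropriate but not a departure from the paper's argument, which cites the same two lemmas and concludes in the same way.
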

\begin{proof}
Lemmas \ref{lem kappa t N} and \ref{lem lambda t N} imply that $q_t^N$ equals \eqref{eq def qt}, with $D$ replaced by $D_{X_{MA}}$. So the claim follows from the analogue of \eqref{eq ind qt} for $D_{X_{MA}}$. 
\end{proof}

Propositions \ref{prop ind X MA} and \ref{prop qtN}, and the fact that $p_E^N = p_{E|_{X_{MA}}}$, lead to the following  key step in the proof of Theorem \ref{thm index}.
\begin{proposition} \label{prop ind G MA}
We have
\[
\langle \Phi^P_x, \ind_G(D)\rangle = \langle \Phi^{MA}_x, \ind_{MA} (D_{X_{MA}}) \rangle. 
\]
\end{proposition}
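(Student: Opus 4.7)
The plan is to derive Proposition \ref{prop ind G MA} essentially as a corollary of the two propositions we have already established, Proposition \ref{prop ind X MA} on the $G$-side and Proposition \ref{prop qtN} on the $MA$-side. Both propositions describe the relevant pairings in terms of the same Schwartz-kernel representative $[q_t^N]-[p]$, differing only by which identity-type term plays the role of $p$. So the whole argument reduces to recognising that these two auxiliary classes agree in $K$-theory.

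Concretely, I would first apply Proposition \ref{prop ind X MA} to rewrite
\[
\langle \Phi^P_x, \ind_G(D)\rangle = \langle \Phi^{MA}_x \# \Tr, [q_t^N] - [p_E^N]\rangle,
\]
and then invoke Proposition \ref{prop qtN} with $\varphi = \Phi^{MA}_x$ to get
\[
\langle \Phi^{MA}_x, \ind_{MA}(D_{X_{MA}}) \rangle = \langle \Phi^{MA}_x \# \Tr, [q_t^N] - [p_{E|_{X_{MA}}}] \rangle.
\]
The two right-hand sides then have to be compared, so the core step is to check the indicated identity $p_E^N = p_{E|_{X_{MA}}}$ at the level of elements of the unitisation of $\cC(MA, \cL^1(L^2(E|_Y)))$.

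To justify $p_E^N = p_{E|_{X_{MA}}}$, I would unpack the definitions: $p_E$ is the idempotent whose kernel is supported on the diagonal and equal to the orthogonal projection onto $E^-$ fiberwise; under the identification \eqref{eq kappa kappa tilde}, $p_E$ corresponds to the element of $\cC(G)\hat\otimes \Gamma^\infty(\Hom(E|_Y))$ which is (formally) the $\delta$-function at $e\in G$ tensored with the projection onto the negative spinor summand. The operation $\kappa\mapsto\kappa^N$ in \eqref{eq fN}/\eqref{eq kappa N} simply integrates over the $N$-direction, and since $X_{MA}$ is the slice transverse to $N$-orbits this transforms the identity-type kernel for $E\to X$ into the identity-type kernel for $E|_{X_{MA}}\to X/N$. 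Thus $[p_E^N] = [p_{E|_{X_{MA}}}]$ in the relevant $K$-theory group, and the two right-hand sides coincide.

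The main technical care is in handling $p_E$ rigorously — strictly speaking it is not in $\cC(E)$ but only in its unitisation, and one has to check that the identification $p_E^N = p_{E|_{X_{MA}}}$ makes sense at the level of unitisations and that the pairing with $\Phi^{MA}_x\#\Tr$ extends correctly; this uses the same $K\cap M$-invariance and trace-class arguments as in the proof of Proposition \ref{prop ind X MA}. Once this bookkeeping is done, the proposition follows by subtracting the two displayed equalities above. I do not expect any essentially new analytic input beyond what was used for Propositions \ref{prop ind X MA} and \ref{prop qtN}.
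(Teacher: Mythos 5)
Your proposal follows exactly the paper's argument: the paper also deduces the proposition directly from Proposition \ref{prop ind X MA}, Proposition \ref{prop qtN} (applied with $\varphi = \Phi^{MA}_x$), and the identity $p_E^N = p_{E|_{X_{MA}}}$. Your elaboration of the last point, via the formal picture of $p_E$ as a diagonal delta kernel pushed forward under $\kappa \mapsto \kappa^N$, is a reasonable way to see why that identity holds at the level of unitisations.
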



\section{Proof of the index theorem}

Having established an analogue of commutativity of the top two diagrams in \eqref{eq diagram proof} with Proposition \ref{prop ind G MA}, we finish the proof of Theorem \ref{thm index} by considering the bottom two diagrams  in \eqref{eq diagram proof}.
We note that the bottom left part commutes in Lemma \ref{lem decomp DMA}, and show that the bottom right part commutes in Lemmas \ref{lem pairing product psi} and \ref{lem phi A DA}. Then applying the main result of \cite{HW2} to the action by $M$ on $X/AN$ finishes the proof.

\subsection{Cococyles for $M$ and $A$} \label{sec cocycles MA}

Consider the cyclic $l$-cocycle $\tilde \Phi^M_x$ on $\cC(M)$ given by\label{pag tilPhiMx}
\begin{multline*}
\tilde \Phi^M_x(f_0^M, \ldots, f_l^M) =  \int_{M/Z} \int_{(K\cap M)^{l+1}}  \int_{M^l} \\
f_0^{M}(k_0  hxh^{-1} m_1^{-1} a_1^{-1} k_1^{-1} ) 
f_1^{M}(k_1  m_1 m_2^{-1} k_2^{-1}) 
\cdots\\
\cdots
f_{l-1}^{M}(k_{l-1}  m_{l-1} m_l^{-1}k_l^{-1})
f_l^{M}(k_l  m_l k_0^{-1})
dm_1 \cdots dm_l\, 
dk_0 \cdots dk_l\, 
 d(hZ). 
\end{multline*}
Note that $\tilde \Phi^M_x$ is not the same as the cocycle $\Phi^M_x$ for the cuspidal parabolic $M<M$ as defined in \eqref{eq def cocycle}, because the latter is a degree zero cocycle. We do have the following fact.
\begin{lemma} \label{lem tilde phiMx}
Let $\cH$ be a Hilbert space with a unitary representation of $K$.
For all $q \in K_*((\cC(M, \cL^1(\cH)))^{K \times K})$,
\[
\langle \tilde \Phi^M_x, q\rangle = \langle  \Phi^M_x, q\rangle. 
\]
\end{lemma}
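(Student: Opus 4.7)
The strategy is to exploit the $K\times K$-invariance of K-theory representatives to trivialise the $K\cap M$ integrations in the definition of $\tilde\Phi^M_x$, and then use idempotency (resp.\ unitarity) to collapse the resulting iterated convolution back to a single factor, thereby reducing the pairing with $\tilde\Phi^M_x$ to the pairing with the orbital integral trace $\Phi^M_x$.

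First, I would represent $q\in K_0((\cC(M,\cL^1(\cH)))^{K\times K})$ by an idempotent $e$ in a matrix algebra over the invariant subalgebra, which automatically satisfies the $K\cap M$-bi-equivariance condition of \eqref{eq fj KK}. Substituting $l+1$ copies of $e$ into $\tilde\Phi^M_x\#\Tr$ and applying \eqref{eq fj KK}, the $j$-th matrix-valued factor becomes $k_j\circ e(\,\cdot\,)\circ k_{j+1}^{-1}$, with indices taken mod $l+1$. Under the cyclic operator trace, the intermediate compositions $k_{j+1}^{-1}k_{j+1}$ cancel and the outer pair $k_0,k_0^{-1}$ cancels cyclically, so the integrand becomes independent of $k_0,\ldots,k_l$. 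The $(K\cap M)^{l+1}$ integration then contributes $\vol(K\cap M)^{l+1}$, which I normalise to $1$ by a suitable choice of Haar measure on $K\cap M$.

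What remains is
\[
(\tilde\Phi^M_x\#\Tr)(e,\ldots,e)=\int_{M/Z}\int_{M^l}\Tr\bigl(e(hxh^{-1}m_1^{-1})\,e(m_1m_2^{-1})\cdots e(m_l)\bigr)\,dm_1\cdots dm_l\,d(hZ).
\]
Recognising the inner integral as an $(l+1)$-fold group convolution on $M$, this equals $(\Phi^M_x\#\Tr)(e\ast\cdots\ast e)$ with $l+1$ factors. Since $e$ is an idempotent in the convolution algebra, $e\ast e=e$, and the expression collapses to $(\Phi^M_x\#\Tr)(e)=\langle\Phi^M_x,q\rangle$ via the pairing formula \eqref{eq pairing Tr}.

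The main technical obstacle will be tracking the combinatorial normalisations that relate Connes' conventions for pairings of $K_0$ with cyclic cocycles of degrees $0$ and $l$ (typically factors of $(l/2)!$ for even $l$), which must conspire with the $\vol(K\cap M)^{l+1}$ factor and the combinatorial content of the convolution expansion to yield the stated equality of pairings. Structurally, the identity is an instance of the fact that $\tilde\Phi^M_x$ represents the $S^{l/2}$-image of the orbital integral trace $\Phi^M_x$ in periodic cyclic cohomology, and Connes' periodicity operator $S$ preserves the K-theory pairing. For $K_1$-classes (odd $\ast$) with odd $l$, the argument adapts by replacing $e$ with a unitary representative and the analogous pairing formula; for mismatched parities both pairings vanish trivially.
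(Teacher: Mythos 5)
Your proposal is correct and follows essentially the same route as the paper: represent the class by an idempotent $e$ in a matrix algebra over the $K\times K$-invariant subalgebra, use the invariance condition together with the trace property to eliminate the $(K\cap M)^{l+1}$-integration, recognise the remaining $M^l$-integral as the $(l+1)$-fold convolution power $e^{*(l+1)}$ evaluated at $hxh^{-1}$, and invoke idempotency $e*e=e$ to collapse to the orbital-integral trace pairing. The worry you raise about degree-dependent normalisation factors (e.g.\ $(l/2)!$) is not an issue here: the paper fixes its own convention in \eqref{eq pairing Tr}, namely $\langle\varphi,q\rangle=(\varphi\#\Tr)(q,\ldots,q)$ with no prefactor, uniformly for cocycles of any degree, so the two pairings are compared on exactly this footing and no combinatorial factor needs tracking; similarly the paper normalises the Haar measure on the compact group $K\cap M$ so that the $\vol(K\cap M)^{l+1}$ factor is $1$.
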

\begin{proof}
Let
$q \in M_r((\cC(M, \cL^1(\cH)))^{K \times K})$ be an idempotent. Similarly to the last paragraph of the proof of Proposition \ref{prop cocycle MA}, $K\times K$-invariance of $q$ and the  trace property of $\Tr$ imply that
\begin{multline*}
\langle \tilde \Phi^M_x, q\rangle = (\tilde \Phi^M_x \# \Tr)(q, \ldots, q) 
=
\int_{M/Z}   \int_{M^l} 
\Tr\bigl(
q(hxh^{-1} m_1^{-1} a_1^{-1} ) 
q( m_1 m_2^{-1}) 
\cdots \\
\cdots
q( m_{l-1} m_l^{-1})
q( m_l ) \bigr)
dm_1 \cdots dm_l\, 
 d(hZ) 
 = \int_{M/Z}  
\Tr(
q^{l+1}(hxh^{-1} ))
 d(hZ).
\end{multline*}
Because $q$ is an idempotent, this equals
\[
\int_{M/Z}  
\Tr(q(hxh^{-1} )) d(hZ) = \langle\Phi^M_x, q\rangle.
\]

\end{proof}



Let $X_{MA}$, $S_Y$, $S_{\kp \cap \km}$ and $S_{\kk/(\kk \cap \km)}$ be as
above Lemma \ref{lem kappa t N}.
Let $X_M := M \times_{K\cap M} Y \subset X_{MA}$.\label{pag XM} Then $X_{MA} = X_M \times A$, and $X_{M} \cong X/AN$. Consider the $M$-equivariant vector bundles
\beq{eq SAN FAN}
\begin{split}
S_{AN} &= M \times_{K \cap M}(S_{\kp \cap \km} \otimes S_Y) \to X_M \cong X/AN; \\
F_{AN} &= M \times_{K \cap M} (Y \times S_{\kk/(\kk \cap \km)}) \to X_M \cong X/AN.
\end{split}
\eeq
Analogously to Lemma \ref{lem SN Spinc},  $S_{AN}$ is the spinor bundle of an $M$-equivariant $\Spinc$-structure on $X/AN$, and Lemma \ref{lem S perp} and \eqref{eq S Y} imply that \eqref{eq SAN} holds.

Let $D_A$\label{pag DA} be the $\Spin$-Dirac operator on $A$, acting on sections of the spinor bundle $A \times S_{\ka}$. Let $D_{M/(K \cap M)}$\label{pag DMK} be the $\Spin$-Dirac operator on $M/(K\cap M)$, acting on sections of the spinor bundle $M \times_{K \cap M} S_{\kp \cap \km}$. Let $D_{M, K \cap M}$\label{pag DMKM} be its extension to an operator on $C^{\infty}(M) \otimes S_{\kp\cap \km}$, defined analogously to the operator $D_{MA, K \cap M}$ in Subsection \ref{sec index XN}. Analogously to \eqref{eq DXMA}, consider the operator\label{pag DXM}
\[
D_{X_M} := D_{M, K\cap M} \otimes 1 + 1 \otimes D_{Y,M}
\]
on the space  of $K \cap M$-invariant elements of
\[
C^{\infty}(M) \otimes S_{\kp \cap \km} \otimes \Gamma^{\infty}(S_Y \otimes S_{\kk/(\kk \cap \km)} \otimes W|_Y),
\]
which is the space of smooth sections of $S_{AN} \otimes F_{AN} \otimes W/AN$.
Then $D_{X_M}$ is a $\Spinc$-Dirac operator on $X_M$ for the $\Spinc$-structure with spinor bundle $S_{AN}$, 
twisted by the vector bundle  $W_{AN} = F_{AN} \otimes W/AN$.
\begin{lemma} \label{lem decomp DMA}
We have
\[
\ind_{MA}(D_{X_{MA}}) = \ind_M(D_{X_M}) \otimes \ind_A(D_A).
\] 
On the right hand side, we used the exterior Kasparov product
\[
 KK(\C, C^*_rM) \times KK(\C, C^*_rA) \to KK(\C, C^*_r(MA)).
 \]
\end{lemma}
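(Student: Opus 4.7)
The plan is to recognize $D_{X_{MA}}$ as the external sum of $D_{X_M}$ and $D_A$ with respect to the product decomposition $X_{MA} = X_M \times A$, and then invoke the standard multiplicativity of the analytic index under exterior Kasparov products.

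First I would unpack the geometric decomposition. Since $M$ and $A$ commute and intersect trivially, $MA \cong M \times A$ as Lie groups, and the Haar measure decomposes as $dm\,da$. The slice description gives $X_{MA} = MA \times_{K \cap M} Y = (M \times_{K\cap M} Y) \times A = X_M \times A$, with $K \cap M$ acting trivially on $A$. Under this product, the $\Spinc$-structure with spinor bundle $S_N$ on $X_{MA}$ decomposes, via \eqref{eq SN FN}, as the external tensor product of the $\Spinc$-structure on $X_M$ with spinor bundle $S_{AN}$ and the trivial $\Spin$-structure on $A$ with spinor bundle $A \times S_{\ka}$; the twisting bundle $W_N = F_N \otimes W/N$ pulls back from $W_{AN} = F_{AN} \otimes W/AN$ on $X_M$.

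Next I would identify the Dirac operators. The basis $\{X_1,\ldots,X_s\}$ of $(\kp \cap \km) \oplus \ka$ splits into an orthonormal basis of $\kp \cap \km$ and one of $\ka$, so that the expression \eqref{eq Dirac MA} defining $D_{MA,K\cap M}$ breaks into two commuting pieces, giving
\[
D_{MA, K\cap M} = D_{M, K\cap M} \otimes 1 + 1 \otimes D_A,
\]
where the tensor product is graded in accordance with the splitting $S_{\kp} = S_{\kp\cap\km} \otimes S_{\ka} \otimes S_{((\kp\cap\km)\oplus\ka)^{\perp}}$ from \eqref{eq decomp Sp}. Substituting this into the definition \eqref{eq DXMA} of $D_{X_{MA}}$ and regrouping yields
\[
D_{X_{MA}} = \bigl(D_{M, K \cap M} \otimes 1 + 1 \otimes D_{Y,M}\bigr) \otimes 1 + 1 \otimes D_A = D_{X_M} \otimes 1 + 1 \otimes D_A,
\]
with graded tensor products throughout. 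Thus $D_{X_{MA}}$ is exactly the Dirac operator for the product $\Spinc$-structure on $X_M \times A$.

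Finally I would conclude by multiplicativity of the analytic assembly map for product actions. Because $MA = M \times A$ acts on $X_M \times A$ as a product of actions, and $D_{X_{MA}}$ is the external sum of $D_{X_M}$ (an $M$-equivariant, cocompact, elliptic operator) and $D_A$ (an $A$-equivariant, cocompact, elliptic operator), a standard compatibility of the Baum--Connes assembly map with external products gives
\[
\ind_{MA}(D_{X_{MA}}) = \ind_M(D_{X_M}) \otimes \ind_A(D_A) \quad \in K_*(C^*_r(MA)) \cong K_*(C^*_r M) \otimes K_*(C^*_r A),
\]
the tensor product being the exterior Kasparov product $KK(\C,C^*_rM) \times KK(\C,C^*_rA) \to KK(\C,C^*_r(MA))$. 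The main subtlety is keeping the graded tensor product bookkeeping consistent, in particular verifying that the gradings on $S_N$ and on $S_{AN} \boxtimes S_{\ka}$ agree so that $D_{X_{MA}}^{\pm}$ match the exterior product of $D_{X_M}^{\pm}$ and $D_A^{\pm}$; this is a direct consequence of the decomposition \eqref{eq decomp Sp} restricted to $\kp \cap \km$ and $\ka$.
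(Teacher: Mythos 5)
Your proof is correct and follows essentially the same approach as the paper: identify the graded-tensor-product decomposition $D_{X_{MA}} = D_{X_M} \otimes 1 + 1 \otimes D_A$ over $X_{MA} = X_M \times A$, then invoke multiplicativity of the equivariant index under the exterior Kasparov product (the paper cites Theorem 5.2 in \cite{Hochs09} for this). You supply more detail than the paper on how the decomposition of $D_{MA,K\cap M}$ and the regrouping of tensor factors work, which is a reasonable elaboration of the same argument.
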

\begin{proof}
We have
\[
D_{X_{MA}} = 
D_{X_M} \otimes 1 + 1 \otimes D_A,
\]
where we use graded tensor products. Hence the claim follows by
Theorem 5.2 in \cite{Hochs09}.
\end{proof}

	
\begin{lemma}\label{lem pairing product psi}
\[
\langle \Phi^{MA}_x, \ind_{MA} (D_{X_{MA}}) \rangle= \langle \Phi^M_x, \ind_{M}(D_{X_M}) \rangle \langle \Phi^A_e, \ind_{A}(D_A)\rangle.
\]
\end{lemma}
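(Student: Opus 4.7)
The strategy is to combine Lemma~\ref{lem decomp DMA}, which realises $\ind_{MA}(D_{X_{MA}})$ as the exterior Kasparov product $\ind_M(D_{X_M}) \otimes \ind_A(D_A)$, with an explicit factorisation of the cyclic cocycle $\Phi^{MA}_x$ on $\cC(MA) \cong \cC(M) \hat\otimes \cC(A)$ that makes the $K$-theory pairing split. The key observation enabling this is that, since $M$ and $A$ commute, $MA \cong M \times A$ as Lie groups, and the maximal compact subgroup of $MA$ is $K \cap M$ (with $K \cap A$ trivial).

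The core algebraic step would be the following factorisation identity. For all $g_0, \ldots, g_l \in \cC(M)$ and $h_0, \ldots, h_l \in \cC(A)$, regarded as $g_j \otimes h_j \in \cC(MA)$ via $(g_j \otimes h_j)(ma) := g_j(m) h_j(a)$, I claim
\[
\Phi^{MA}_x(g_0 \otimes h_0, \ldots, g_l \otimes h_l) = \tilde\Phi^M_x(g_0, \ldots, g_l) \cdot \Phi^A_e(h_0, \ldots, h_l).
\]
This is a direct computation starting from Lemma~\ref{lem cocycle} applied to $MA$ as its own (maximal) cuspidal parabolic, in which case the $N$-integrals collapse. Because each $k_j \in K \cap M$ commutes with $A$, every argument in the integrand decomposes as an $M$-part times an $A$-part, e.g.\ $k_{j-1} m_j a_j k_j^{-1} = (k_{j-1} m_j k_j^{-1}) \cdot a_j$. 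The integral then splits as a product; the $M$-part is by inspection exactly $\tilde\Phi^M_x(g_0, \ldots, g_l)$, and the $A$-part (with $H\colon A \to \ka$ the Lie-group logarithm) is exactly $\Phi^A_e(h_0, \ldots, h_l)$ in the form produced by Lemma~\ref{lem cocycle} with trivial $M$, $N$ and $K$.

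To conclude, I would extend this identity to operator-valued Schwartz functions via the construction $\varphi \# \Tr$, and represent the exterior Kasparov product by a tensor product of idempotents (or invertibles in the odd-parity case): if $e_M$ and $e_A$ represent $\ind_M(D_{X_M})$ and $\ind_A(D_A)$ in suitable matrix algebras over the unitisations of $\cC(M, \cL^1(\cH_M))$ and $\cC(A, \cL^1(\cH_A))$, then $e_M \hat\otimes e_A$ represents the exterior product. Plugging this into the pairing with $\Phi^{MA}_x$, the factorisation identity and \eqref{eq pairing Tr} yield
\[
\langle \Phi^{MA}_x, \ind_{MA}(D_{X_{MA}}) \rangle = \langle \tilde\Phi^M_x, \ind_M(D_{X_M}) \rangle \cdot \langle \Phi^A_e, \ind_A(D_A) \rangle,
\]
and Lemma~\ref{lem tilde phiMx} then replaces $\tilde\Phi^M_x$ by $\Phi^M_x$ to give the claim. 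The main obstacle I expect is not the algebra of the factorisation identity (which is bookkeeping) but the verification that the exterior Kasparov product of these $K$-theory classes is indeed represented at the level of the Connes--Moscovici $q_t$-type idempotents by $e_M \hat\otimes e_A$; this rests on the factorisation $e^{-tD_{X_{MA}}^2} = e^{-tD_{X_M}^2} \hat\otimes e^{-tD_A^2}$ coming from the graded-tensor decomposition $D_{X_{MA}} = D_{X_M} \otimes 1 + 1 \otimes D_A$ already used in the proof of Lemma~\ref{lem decomp DMA}.
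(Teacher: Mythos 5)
Your proposal is correct and matches the paper's proof in essentially every step: applying the $G=P=MA$ case of Lemma~\ref{lem cocycle} to factor $\Phi^{MA}_x$ into an $M$-part $\tilde\Phi^M_x$ and an $A$-part $\Phi^A_e$, invoking Lemma~\ref{lem decomp DMA} together with the compatibility of the cup product with the exterior Kasparov product to split the pairing, and finishing with Lemma~\ref{lem tilde phiMx}. The concern you raise at the end about representing the Kasparov product by $e_M\hat\otimes e_A$ is exactly what the paper subsumes under Lemma~\ref{lem decomp DMA} and the cup-product compatibility, so you have identified the same route, only being somewhat more explicit about the direct computation.
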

\begin{proof}
Applying
Lemma \ref{lem cocycle} and with $G$ and $P$ both replaced by $MA$, so that the integrals over $K$ and $N$ are not present, we find that $\Phi^{MA}_x$ decomposes as
\[
\Phi^{MA}_x = \Phi^{M}_x \cup \Phi^{A}_e.
\]
So by Lemma
 \ref{lem decomp DMA}, the pairing $\langle \Phi^{MA}_x, \ind_{MA} (D_{X_{MA}}) \rangle$ equals
\beq{eq pairing MA}
\langle \tilde{\Phi}^M_x,  \ind_{M}(D_{X_M}) \rangle \langle \Phi^A_e, \ind_{A}(D_A)\rangle.
\eeq
This follows from compatibility of the cup product on cyclic cohomology with the external Kasparov product, or from a direct computation involving the special case of Lemma \ref{lem cocycle} just mentioned. The claim then follows
by Lemma \ref{lem tilde phiMx}.
\end{proof}

\begin{lemma} \label{lem phi A DA}
We have
\[
\langle \Phi^A_e, \ind_A D_A\rangle = 1.
\]
\end{lemma}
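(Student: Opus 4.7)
My plan is to reduce the statement to the classical pairing of the Bott generator with the fundamental cyclic cocycle on Euclidean space, via Fourier analysis on the abelian group $A \cong \R^l$.

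First, I would use that the Fourier transform gives an isomorphism of Fr\'echet algebras $\cC(A) \xrightarrow{\sim} \cS(i\ka^*)$ which extends to the $C^*$-algebra isomorphism $C^*_r(A) \xrightarrow{\sim} C_0(i\ka^*)$. Under this isomorphism, the index $\ind_A(D_A) \in K_l(C^*_r(A))$ is transported to the Bott generator $\beta \in K_l(C_0(i\ka^*))$. This identification is standard: the Spin-Dirac operator on $\R^l$ realises Bott periodicity, as in the Dirac--dual-Dirac construction, and the identification is compatible with the normalisation of Haar measure \eqref{eq vol A}.

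Next, I would compute the Fourier transform of $\Phi^A_e$. Starting from the specialisation of Lemma~\ref{lem cocycle} to $G = P = A$ (so that $M$, $N$, $K$ are trivial and $Z=A$),
\[
\Phi^A_e(f_0,\ldots,f_l) = \int_{A^l}\det(a_1,\ldots,a_l)\, f_0(-a_1) f_1(a_1-a_2)\cdots f_{l-1}(a_{l-1}-a_l) f_l(a_l)\, da_1\cdots da_l,
\]
I substitute Fourier inversion for each $f_j$ and perform the $a_j$-integrals. The determinant factor converts, under integration against $e^{i\sum_j(\xi_j - \xi_{j-1})(a_j)}$, into a determinant of partial derivatives of $\delta$-functions in the dual variables. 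After integrating by parts and collapsing the delta functions, the resulting distribution on $(i\ka^*)^{l+1}$ produces, up to an overall constant, the fundamental cyclic $l$-cocycle
\[
\tau(g_0,\ldots,g_l) = \frac{1}{l!(2\pi i)^l}\int_{i\ka^*} g_0\, dg_1\wedge\cdots\wedge dg_l
\]
on $\cS(i\ka^*)$. With the measure normalisations fixed by \eqref{eq vol A} and compatible Plancherel measure on $\hat A$, the overall constant is exactly the one that makes $\Phi^A_e$ correspond to $\tau$ written in the normalisation appropriate to paired Bott/Dirac classes.

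Finally, I invoke the classical computation $\langle \tau,\beta\rangle = 1$ for the pairing of the Bott generator with the fundamental cocycle on $\R^l \cong i\ka^*$; this is the basic Chern--Connes calculation (see Connes \cite{ConnesBook}, Chapter~III), and combined with the two identifications above it yields the desired equality $\langle \Phi^A_e, \ind_A(D_A)\rangle = 1$. The main obstacle in executing this plan cleanly is the bookkeeping of constants: the factors of $2\pi$ coming from the Fourier and Plancherel normalisations, the sign from the alternation in the determinant and the $-a_1$ argument of $f_0$, the orientation of $i\ka^*$ determined by the fixed basis of $\ka$, and the Spin-structure sign defining the Bott generator all need to match up precisely so that the answer is $+1$ rather than $\pm 1$ or an expression involving $(2\pi i)^l$. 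An alternative, more hands-on route that avoids some of this bookkeeping is to represent $\ind_A(D_A)$ by the idempotent $q_t$ of \eqref{eq def qt} (or its odd analogue when $l$ is odd), substitute the explicit Gaussian heat kernel for $D_A^2=-\Delta$ on $\R^l$ into the cocycle formula, and evaluate the resulting Gaussian integral directly; I expect both routes to give the same constant-tracking challenge.
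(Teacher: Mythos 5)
The paper's proof of this lemma is a one-line citation: it is a very special case of Theorem~4.6 in \cite{Pflaum15} (the index theorem of Pflaum--Posthuma--Tang for cyclic cocycles coming from group cohomology, applied to the abelian group $A \cong \R^l$ and the trivial group cohomology class giving the ``volume'' $l$-cocycle). Your approach is genuinely different: rather than delegating to \cite{Pflaum15}, you propose to reduce everything to the classical Chern--Connes pairing $\langle \tau, \beta\rangle = 1$ on $\R^l$ via Fourier transform. The outline is sound --- Fourier transform does identify $\cC(A)$ with $\cS(i\ka^*)$, the Spin--Dirac index with the Bott generator, and $\Phi^A_e$ with a multiple of the fundamental cocycle --- and this route has the advantage of being self-contained and of making the geometric content (Bott periodicity realised by the Dirac operator) explicit. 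What it buys compared to the paper's citation is independence from \cite{Pflaum15}; what it loses is that \cite{Pflaum15} already did all the bookkeeping.

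That bookkeeping is, however, exactly where your proposal stops. You explicitly defer the computation of the overall constant, but for this lemma the constant \emph{is} the theorem: the qualitative statement that $\Phi^A_e$ Fourier-transforms to some scalar multiple of $\tau$, and that the index Fourier-transforms to the Bott class, would yield $\langle\Phi^A_e,\ind_A D_A\rangle = c$ for some unknown $c\in\C^\times$, and it is the specific value $c=1$ that makes the whole proof of Theorem~\ref{thm index} in this paper work (via Lemma~\ref{lem pairing product psi} and the equalities \eqref{eq ind G ind M}--\eqref{eq ind M}). Your assertion that ``the overall constant is exactly the one that makes $\Phi^A_e$ correspond to $\tau$ written in the normalisation appropriate to paired Bott/Dirac classes'' is circular as stated --- it presupposes the answer. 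To close the gap you would need to actually carry out the Fourier computation of $\Phi^A_e$ with the normalisations \eqref{eq vol A} and \eqref{eq decomp dg}, determine the factor relating it to $\frac{1}{l!(2\pi i)^l}\int g_0\,dg_1\wedge\cdots\wedge dg_l$ (including the sign from the determinant and the $-a_1$ argument and the orientation from the chosen basis), fix the corresponding normalisation of the Bott/Dirac class, and then check that the product of constants is $1$. Your alternative suggestion --- plug the Gaussian heat kernel of $D_A^2 = -\Delta$ on $\R^l$ into the cocycle and evaluate the resulting Gaussian integral --- is perhaps the cleaner way to do this, and is closer in spirit to how such constants are usually pinned down, but it too is left unexecuted. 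As it stands, the proposal is a plausible plan rather than a proof.
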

\begin{proof} This is a very special case of Theorem 4.6 in \cite{Pflaum15}. 
\end{proof}

\begin{lemma} \label{lem even odd}
Let $D_{X \times \R}$ be the $\Spinc$-Dirac  operator on $X \times \R$ for the $\Spinc$-structure with spinor bundle $S \times \R$,
  twisted by $W \times \R$. Then
\[
\langle \Phi^{P \times \R}_{(x,0)}, \ind_{G \times \R} (D_{X \times \R})\rangle = \langle \Phi^P_x, \ind_G(D)\rangle.
\]
\end{lemma}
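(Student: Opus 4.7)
The plan is a suspension argument: I want to show that both the cocycle and the index factor as products over $G$ and $\R$, and that the $\R$-factor contributes a trivial scalar $1$.

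First, note that $P \times \R$ is a cuspidal parabolic subgroup of $G \times \R$ with Langlands decomposition $M \cdot (A \times \R) \cdot N$, where we view $\R$ as its own $A$-factor (since $\R$ has no nontrivial compact subgroups and is abelian). In particular the $A$-part has dimension $l+1$, and $\Phi^{P \times \R}_{(x,0)}$ is an $(l+1)$-cocycle on $\cC(G \times \R) \cong \cC(G)\, \hat\otimes\, \cC(\R)$. The element $(x,0)$ lies in the $M$-part $M \times \{0\}$, and the same maximal torus $T \subset K \cap M$ serves both for $P < G$ and for $P \times \R < G \times \R$. Using the alternative formula from Lemma \ref{lem cocycle}, together with the fact that the volume form on $\ka \oplus \R$ is the wedge of the volume forms on $\ka$ and on $\R$, one obtains a factorisation
\[
\Phi^{P \times \R}_{(x,0)} = \Phi^P_x \cup \Phi^{\R}_0
\]
under the cup product on the cyclic cohomology of $\cC(G)\, \hat\otimes\, \cC(\R)$.

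On the index side, the Dirac operator splits as
\[
D_{X \times \R} = D \otimes 1 + 1 \otimes D_\R
\]
with graded tensor products, and the same invocation of Theorem 5.2 in \cite{Hochs09} used in Lemma \ref{lem decomp DMA} yields
\[
\ind_{G \times \R}(D_{X \times \R}) = \ind_G(D) \otimes \ind_\R(D_\R)
\]
as an external Kasparov product in $KK(\C, C^*_r(G \times \R))$. Combining these two factorisations with the compatibility of the cup product on cyclic cohomology with the external Kasparov product (exactly as invoked in Lemma \ref{lem pairing product psi}) gives
\[
\langle \Phi^{P \times \R}_{(x,0)}, \ind_{G \times \R}(D_{X \times \R})\rangle = \langle \Phi^P_x, \ind_G(D)\rangle \cdot \langle \Phi^{\R}_0, \ind_\R(D_\R)\rangle,
\]
and the second factor equals $1$ by Lemma \ref{lem phi A DA} applied to $\R$ viewed as its own cuspidal parabolic.

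The main technical point is the cocycle factorisation $\Phi^{P \times \R}_{(x,0)} = \Phi^P_x \cup \Phi^{\R}_0$: this requires a careful sign-tracking when the determinant $\det(a_1, \ldots, a_{l+1})$ on $\ka \oplus \R$ is expanded via shuffle permutations to match the cup product formula in cyclic cohomology. Once this identity is verified, the remaining ingredients are either already in the paper (Lemmas \ref{lem decomp DMA}, \ref{lem pairing product psi}, \ref{lem phi A DA}) or standard $KK$-theoretic facts.
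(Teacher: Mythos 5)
Your proposal is essentially the paper's own proof. The paper's argument is exactly the suspension you describe: it writes $D_{X\times\R} = D\otimes 1 + 1\otimes i\frac{d}{dt}$, invokes (analogues of) Lemma \ref{lem decomp DMA} for the exterior Kasparov product factorisation of the index, Lemma \ref{lem pairing product psi} for the factorisation of the pairing, and Lemma \ref{lem phi A DA} for the $\R$-factor being $1$. Your identification of the Langlands decomposition of $P\times\R$ and the cup-product factorisation $\Phi^{P\times\R}_{(x,0)}=\Phi^P_x\cup\Phi^\R_0$ is the same mechanism the paper uses in proving Lemma \ref{lem pairing product psi}, so you have correctly anticipated the shape and the ingredients of the argument.
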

\begin{proof}
The operator $D_{X \times \R}$ equals
\[
D_{X \times \R} = D \otimes 1 + 1 \otimes i\frac{d}{dt}
\]
on $\Gamma^{\infty}(E \times \R) = \Gamma^\infty(E) \otimes C^{\infty}(\R)$.
So by Lemmas \ref{lem pairing product psi} and \ref{lem phi A DA}, and 
an analogue of Lemma \ref{lem decomp DMA},
\[
\langle \Phi^{P \times \R}_{(x,0)}, \ind_{G \times \R} (D_{X \times \R})\rangle 
= 
\langle \Phi^P_x, \ind_G(D)\rangle
\langle \Phi^{\R}_0, \ind_{\R}(i\frac{d}{dt})\rangle
= \langle \Phi^P_x, \ind_G(D)\rangle.
\]
\end{proof}

\subsection{Proofs of  Theorem \ref{thm index} and Lemma \ref{lem index XN}} \label{sec pf index}

\begin{lemma} \label{lem char Skm}
The character of $S_{\kk/(\kk \cap \km)}$ restricted to $T \cap M$ equals
\beq{eq char Skm}
\prod_{\lambda} (e^{\lambda/2} - e^{-\lambda/2}),
\eeq
where $\lambda$ runs over the $T \cap M$-weights of $\kk/(\kk \cap \km)$.  If $P$ is not a maximal cuspidal parabolic, then this equals zero.
\end{lemma}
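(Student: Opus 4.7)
The plan has two parts: first derive the product formula from the standard character of the graded spin representation, then identify the vanishing condition with the failure of maximal cuspidality through classical Cartan theory.

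To obtain the character formula I would invoke Lemma \ref{lem S perp}: the map $y \mapsto (y + \theta y)/2 + \kk \cap \km$ is a $T \cap M$-equivariant isomorphism $\kn \to \kk/(\kk \cap \km)$, so the $T \cap M$-weights of $\kk/(\kk \cap \km)$ coincide with those of $\kn$, namely the restrictions $\alpha|_\kt$, where $\alpha$ ranges over the roots of $(\kg_\C, \kh_\C)$ (with $\kh = \kt \oplus \ka$) whose root space lies in $\kn_\C$. For any real Euclidean $T$-space $V$, choosing a compatible $T$-invariant complex structure $J$ and writing $S_V = (\det V_J)^{1/2} \otimes \bigwedge^\bullet V_J$ gives the graded-character formula
\[
\mathrm{ch}(S_V^+ - S_V^-) = \prod_\lambda (e^{\lambda/2} - e^{-\lambda/2}),
\]
with $\lambda$ ranging over the weights of $V_J$ (equivalently, one representative from each complex-conjugate pair of weights in $V \otimes \C$). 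Applied to $V = \kk/(\kk \cap \km)$, this yields the claimed product.

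For the vanishing, the structural input I need is the classical fact that $P$ is maximal cuspidal if and only if $\kh = \kt \oplus \ka$ is a fundamental (maximally compact) $\theta$-stable Cartan of $\kg$, which holds if and only if there is no real root of $(\kg_\C, \kh_\C)$, i.e.\ no root $\alpha$ with $\alpha|_\kt = 0$. This is standard in the theory of real reductive groups (see e.g.\ Knapp's \emph{Lie Groups Beyond an Introduction}, Chapter VI): a real root $\alpha$ permits a Cayley transform replacing $\R H_\alpha \subset \ka$ by $\R(E_\alpha + \theta E_\alpha) \subset \kk$, producing a strictly more compact Cartan $\kh'$ whose associated cuspidal parabolic strictly contains $P$; conversely, the absence of real roots precludes any such enlargement.

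Granting this, if $P$ is not maximal cuspidal, then a real root $\alpha$ of $(\kg_\C, \kh_\C)$ exists. Real roots occur in pairs $\pm\alpha$, and for the fixed positive system of restricted roots defining $N$, exactly one of $\pm\alpha$ has root space in $\kn$. That root contributes the weight $\alpha|_\kt = 0$ to the $T \cap M$-weights of $\kn$, giving a factor $e^{0/2} - e^{-0/2} = 0$ in the product and forcing the whole character to vanish. The main obstacle is the structural claim linking maximal cuspidality to the absence of real roots via Cayley transforms; once that is invoked from the structure theory of real reductive groups, both the character computation and the vanishing deduction are routine.
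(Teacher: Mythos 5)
Your proposal is correct, but for the vanishing statement you take a noticeably longer route than the paper. The paper's argument is a two-line observation: if $P$ is not a maximal cuspidal parabolic, then by definition $T \cap M$ is not a maximal torus of $K$, so the centraliser of $T\cap M$ in $\kk$ strictly contains $\kt \cap \km$ (which, by maximality of $T$ in $K\cap M$, is precisely the $T\cap M$-fixed part of $\kk\cap\km$); hence the zero weight appears in $\kk/(\kk\cap\km)$, killing the product \eqref{eq char Skm}. You instead pass through the structure theory of real roots and Cayley transforms to prove the equivalence ``$P$ maximal cuspidal $\Leftrightarrow$ $\kh=\kt\oplus\ka$ fundamental $\Leftrightarrow$ no real roots,'' and then locate the zero weight in $\kn$ via Lemma \ref{lem S perp}. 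This is valid (it is essentially Knapp's Proposition 6.70 plus your correct observation that the real root vector in $\kn$ carries $T\cap M$-weight zero), but it imports more machinery than is needed: the defining property of ``maximal cuspidal'' already gives a $T\cap M$-fixed vector in $\kk$ outside $\kk\cap\km$ without any Cayley-transform analysis. For the character formula itself, your explicit derivation via a $T\cap M$-invariant complex structure and $S_V = (\det V_J)^{1/2}\otimes\bigwedge^\bullet V_J$ is fine and amounts to the same computation the paper outsources to Remark 2.2 of Parthasarathy.
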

\begin{proof}
The first claim follows from Remark 2.2 in \cite{Parthasarathy72}.
If $P$ is not a maximal cuspidal parabolic, then $T \cap M$ is not a maximal torus in $K$. Hence there is a nonzero subspace  of $\kk$ on which $T \cap M$ acts trivially. This means that the weight zero occurs in \eqref{eq char Skm}, which is therefore zero.
\end{proof}

\begin{proof}[Proof of Theorem \ref{thm index}]
If $X$ and $G/K$ are even-dimensional, then 
applying Proposition \ref{prop ind G MA} 
and Lemma 
 \ref{lem pairing product psi},
we see that
\beq{eq ind G ind M}
\langle \Phi^P_x, \ind_G(D)\rangle = \langle \Phi^{MA}_x, \ind_{MA} (D_{X_{MA}})\rangle 
= \langle \Phi^{A}_e, \ind_{A}( D_{A})\rangle  \langle \Phi^{M}_x, \ind_{M} (D_{X_{M}})\rangle.
\eeq
Lemma \ref{lem phi A DA} implies that the right hand side equals 
\beq{eq ind M}
\langle \Phi^{M}_x, \ind_{M} (D_{X_{M}})\rangle.
\eeq
Because $M$ has a compact Cartan subgroup, Proposition 4.11 in \cite{HW2} and the fact that $X/AN$ is a smooth manifold equal to $X_M$ imply that \eqref{eq ind M} equals the right hand side of \eqref{eq index} if $x$ lies in a compact subgroup of $M$, and is zero otherwise. Here we use the fact that $\Phi^M_x$ equals the orbital integral trace $\tau_x$ used in \cite{HW2}. Two points to note here are the following.
\begin{itemize}
\item
The condition in Proposition 4.11 in \cite{HW2} that $x$ has `finite Gaussian orbital integral'  is in fact always satisfied, see Section 4.2 in \cite{BismutHypo}.
\item In \cite{HW2}, it was assumed for simplicity that the group acting is connected. The group $M$ is not connected in general, but it belongs to Harish-Chandra's class, which is enough for the results in \cite{HW2} to hold. 
\end{itemize}

If $P$ is not a maximal cuspidal parabolic, then Lemma \ref{lem char Skm} implies that 
 the bundle $F_{AN}$ 
in the second line of \eqref{eq SAN FAN}
is zero as a virtual vector bundle. So the virtual vector bundle $W_{AN}$ is zero, and $\ch([W_{AN}|_{\supp(\chi_x)}](x))$ on the right hand side of \eqref{eq index} is zero as well.

By Lemma \ref{lem even odd},  the case where $X$ and $G/K$ are even-dimensional implies the case where $X$ and $G/K$ are odd-dimensional. If the dimensions of $X$ and $G/K$ have different parities (i.e.\ $Y$ is odd-dimensional), then both sides of \eqref{eq index} are zero.
%
%
\end{proof}

\begin{proof}[Proof of Lemma \ref{lem index XN}]
Recall the definition of the projection maps $q_A\colon X/N \to A$ and $q\colon X/N \to X/AN$ above Lemma \ref{lem index XN}.
The decomposition $X/N = X/AN \times A$ implies that
\[
\begin{split}
\hat A(X/N) &= q^*\hat A(X/AN)q_A^*\hat A(A) = q^*\hat A(X/AN);\\
W_N &= q^*W_{AN};\\
S_N &= q^*S_{AN} \otimes S_{\ka}.
\end{split}
\]
The third equality implies that
 $L_{\det}^{N} = q^*L_{\det}$. 

Let $P$ be a maximal cuspidal parabolic, and let $x \in T < K \cap M$. Then $x$ acts trivially on $A$, so $(X/N)^x = (X/AN)^x \times A$. This implies that $\cN_{N} = q^*\cN$. We conclude that 
\eqref{eq index XN} equals
\begin{multline*}
\int_{(X/N)^x} q^* \left(\chi_x \frac{\hat A((X/AN)^x) \ch([W_{AN}|_{\supp(\chi_x)}](x)) e^{c_1(L_{\det}|_{(X/AN)^x})}}{\det(1-x e^{-R^{\cN}/2\pi i})^{1/2}}\right) \wedge q_A^*( \chi_A\, da) \\
=\int_{(X/AN)^x}  \chi_x \frac{\hat A((X/AN)^x) \ch([W_{AN}|_{\supp(\chi_x)}](x)) e^{c_1(L_{\det}|_{(X/AN)^x})}}{\det(1-x e^{-R^{\cN}/2\pi i})^{1/2}}
\int_A\chi_A\, da,
\end{multline*}
which equals the right hand side of \eqref{eq index}.
\end{proof}

\section{Extensions and special cases}

\subsection{General elliptic operators}

By standard arguments, the index formula \eqref{eq index} for twisted $\Spinc$-Dirac operators implies an index formula for general elliptic operators. We sketch the argument here, and refer to \cite{BvEII} for some details. Also compare this with the proof  of Theorem 2.5 in \cite{HW2}.

Let $E \to X$ be any Hermitian $G$-vector bundle, and $D$ a $G$-equivariant elliptic differential operator on $E$.
Let $\Sigma X$ be the manifold obtained from $X$ by glueing two copies of the unit ball bundle in $TX$ along the unit sphere bundle. This has a natural $G$-invariant almost complex structure. Let $p^{\Sigma X}\colon \Sigma X \to X$ be the projection map. There is a Hermitian $G$-vector bundle $V_D \to \Sigma X$ defined in terms of the principal symbol of $D$, such that the $\Spinc$-Dirac operator $D_{\Sigma X}^{V_D}$ on $\Sigma X$ (for the $\Spinc$-structure defined by the almost complex structure) twisted by $V_D$ satisfies
\[
[D] = p^{\Sigma X}_* [D_{\Sigma X}^{V_D}] \in K_0^G(X),
\]
see Theorem 5.0.4 in \cite{BvEII}. Naturality of the analytic assembly map implies that
\[
\ind_G(D) = \ind_G(D_{\Sigma X}^{V_D}),
\]
and Theorem \ref{thm index} is a topological expression for the pairing of the index on the right hand side with $\Phi^P_x$.

\subsection{A higher $L^2$-index theorem}

The case of Theorem \ref{thm index}  where $x=e$ is a higher version of Wang's $L^2$-index theorem, 
Theorem 6.10 in \cite{Wang14}, in the case of linear reductive Lie groups. This is the equality 
\beq{eq higher L2}
\langle \Phi^P_e, \ind_G(D)\rangle 
= 
\int_{X/AN} \chi_e^{AN} \hat A(X/AN) \ch([W_{AN}|_{\supp(\chi_e)}] ) e^{c_1(L_{\det})}.
\eeq
This implies
 a higher version of Connes and Moscovici's $L^2$-index theorem on homogeneous spaces, Theorem 5.2 in \cite{Connes82}.
 
Let $H < K \cap M$ be a closed subgroup. On page 309 of \cite{Connes82}, a Chern character
\[
\ch\colon R(H) \to H^*(\km, H)
\]
is defined, where $H^*(\km, H)$ denotes relative Lie algebra cohomology. Furthermore, an $\hat A$-class $\hat A(\km, H) \in H^*(\km, H)$ is constructed there from the representation $\km/\kh$ of $H$.
Let $V$ be a finite-dimensional virtual representation of $H$. 
 Suppose, for simplicity, that $G/H$ has a $G$-invariant $\Spin$-structure. (This assumption may be dropped as described on page 307 of \cite{Connes82}.) Let $D_{G/H}^V$ be the $\Spin$-Dirac operator on $G/H$ coupled to $G \times_H V \to G/H$.
\begin{corollary} \label{cor CM}
We have
\beq{eq CM}
\langle \Phi^P_e, \ind_G(D_{G/H}^V)\rangle = \int_{\km/\kh} \hat A(\km, H) \ch(V \otimes S_{\kk/(\kk \cap \km)}).
\eeq
\end{corollary}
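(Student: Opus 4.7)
The plan is to specialize the higher $L^2$-index formula \eqref{eq higher L2} to the homogeneous case $X = G/H$ with twisting bundle $W = G \times_H V$, and translate the resulting topological integral into Connes and Moscovici's relative Lie algebra cohomology pairing.

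First, applying \eqref{eq higher L2} in this setting and using Abels' slice $Y = K/H$ (which is $K$-invariant and compact since $H \subset K \cap M \subset K$), I obtain
\[
\langle \Phi^P_e, \ind_G(D_{G/H}^V)\rangle = \int_{X/AN}\chi_e^{AN}\, \hat A(X/AN)\, \ch(W_{AN})\, e^{c_1(L_{\det})},
\]
with $X/AN \cong X_M = M \times_{K \cap M}(K/H)$ under the identifications of Subsection \ref{sec index XN}. The restriction $W|_Y = K \times_H V$, combined with the explicit formula \eqref{eq SAN FAN} for $F_{AN}$, identifies the twisting bundle $W_{AN} = F_{AN} \otimes W/AN$ as the $M$-equivariant bundle on $X/AN$ associated to the $H$-representation $V \otimes S_{\kk/(\kk \cap \km)}$; the extra factor $S_{\kk/(\kk\cap\km)}$ coming from $F_{AN}$ is the source of the matching twist on the right-hand side of \eqref{eq CM}.

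Next, I invoke Connes and Moscovici's homogeneous-space generalization of their $L^2$-index theorem (pages 307--309 of \cite{Connes82}), applied to the $M$-equivariant structure on $X/AN$. The integrand above is an $M$-invariant characteristic form whose Chern--Weil representative is determined by the Lie algebraic data of the reductive pair $(\km, H)$ and by the representation $V \otimes S_{\kk/(\kk\cap\km)}$. Integration against the $M$-cutoff $\chi_e^{AN}$, which satisfies $\int_M \chi_e^{AN}(mp)\, dm = 1$, then collapses the geometric integral to the pairing of $\hat A(\km, H) \cdot \ch(V \otimes S_{\kk/(\kk\cap\km)}) \in H^*(\km, H)$ with the fundamental class of $\km/\kh$, yielding the right-hand side of \eqref{eq CM}.

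The main technical step is this last translation: I need to match the Chern--Weil representatives of $\hat A(X_M)$, $\ch(W_{AN})$ and $e^{c_1(L_{\det})}$ on the fibered $M$-space $X_M$ to the Connes--Moscovici classes $\hat A(\km, H)$ and $\ch(V \otimes S_{\kk/(\kk\cap\km)})$ in relative Lie algebra cohomology. This is a homogeneous-bundle Chern--Weil calculation entirely analogous to Theorem 5.2 of \cite{Connes82}, the key additional ingredient being the spin representation $S_{\kk/(\kk\cap\km)}$ corresponding via Lemma \ref{lem S perp} to the directions complementary to $(\kp\cap\km)\oplus\ka$ in $\kp$, which absorbs the ``horizontal'' part of the geometry of $X_M$ over $M/(K\cap M)$.
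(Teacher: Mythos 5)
Your outline follows essentially the same route the paper takes: specialize the higher $L^2$-index formula \eqref{eq higher L2} to $X=G/H$, $W=G\times_H V$, recognize $X/AN\cong M\times_{K\cap M}(K/H)$ and $W_{AN}$ as the homogeneous bundle attached to $V\otimes S_{\kk/(\kk\cap\km)}$, and then convert the geometric integral to the Connes--Moscovici pairing in relative Lie algebra cohomology. The difference is in how the final conversion is handled. The paper does not carry out any Chern--Weil computation: it simply cites Corollary 6.14 and Remark 6.15 of Wang \cite{Wang14}, which already prove that the right-hand side of \eqref{eq higher L2} in this homogeneous setting equals the relative Lie algebra cohomology pairing on the right of \eqref{eq CM}. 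You instead propose to re-derive this match ``entirely analogously to Theorem 5.2 of \cite{Connes82},'' but you stop at identifying it as the main technical step without actually doing it.

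That is a real gap, not merely a stylistic one. The matching of $\hat A(X/AN)$, $\ch(W_{AN})$ and in particular the extra $e^{c_1(L_{\det})}$ factor to the classes $\hat A(\km,H)$ and $\ch(V\otimes S_{\kk/(\kk\cap\km)})$ is precisely the content of Wang's result; it requires tracking the $\Spinc$ determinant line bundle through the construction and showing that the invariant Chern--Weil forms on $M\times_{K\cap M}(K/H)$, once integrated against the cutoff $\chi_e^{AN}$, reduce to the relative Lie algebra cohomology integrand on $\km/\kh$. Your sketch does not address the determinant line contribution at all (under the paper's simplifying assumption that $G/H$ is $G$-equivariantly $\Spin$ this works out, but one must say why), and it does not verify that the $M$-invariant curvature forms on $X_M$ descend to the claimed algebraic representatives. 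Either carry out that Chern--Weil comparison explicitly, or do what the paper does and invoke Corollary 6.14 and Remark 6.15 of \cite{Wang14}, which is the cleanest way to close the argument.
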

\begin{proof}
It is shown in Corollary 6.14 and Remark 6.15 in \cite{Wang14} that if $X=G/H$ and $D = D_{G/H}^V$, the right hand side of \eqref{eq higher L2} equals the right hand side of \eqref{eq CM}.
\end{proof}

If $G$ has a compact Cartan subgroup, then $M=G$, and Corollary \ref{cor CM} reduces to Theorem 5.2 in \cite{Connes82}. If $G$ does not have a compact Cartan subgroup, then both sides of the equality in Theorem 5.2 in \cite{Connes82} equal zero by Theorem 6.1 in \cite{Connes82} and Harish-Chandra's criterion $\rank(G)=\rank(K)$ for existence of discrete series represenrations; see also (1.2.5) in \cite{BM}. Corollary \ref{cor CM} is a generalisation of  Theorem 5.2 in \cite{Connes82} that gives a nontrivial result even when $G$ does not have a compact Cartan subgroup; see also Subsection \ref{sec complex}.

\subsection{Dirac induction}

We assume that $G/K$ is equivariantly $\Spin$ for simplicity. As before, let $R(K)$ be the representation ring of $K$. The Dirac induction map
\[
\DInd_K^G\colon R(K) \to K_*(C^*_r(G))
\]
 from the Connes--Kasparov conjecture maps an irreducible representation  $V$  of $K$ to $\ind_G(D_{G/K}^V)$, where $D_{G/K}^V$ is the $\Spin$-Dirac operator on $G/K$ coupled to $G \times_K V \to G/K$. In this case, Theorem \ref{thm index} implies that
\beq{eq pairing GK}
\langle \Phi^P_x, \ind_G(D_{G/K}^V) \rangle = (-1)^{\dim(M/(K \cap M))/2} \frac{\chi_V (x) \chi_{S_{\kk/(\kk \cap \km)}}(x)}{\chi_{S_{\kp \cap \km}} (x)},
\eeq
where the letter $\chi$ denotes the character of a representation. This can be deduced directly from Theorem \ref{thm index} as in the proof of Theorem 3.1 in \cite{HW19}, but the easiest was to deduce this equality is to use
the equalities \eqref{eq ind G ind M} and then apply Theorem 3.1 in \cite{HW19} directly.

The following fact was deduced from a fixed-point formula as Corollary 4.5 in \cite{HW19}, in the case where $G$ has a compact Cartan subgroup. We now generalise this argument to arbitrary linear reductive groups.
\begin{corollary}
Dirac induction for $G$ is injective.
\end{corollary}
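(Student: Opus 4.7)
The plan is to combine the explicit pairing formula \eqref{eq pairing GK} with an elementary integrality argument in the representation ring of a maximal torus. First I would note that if $V \in R(K)$ satisfies $\DInd_K^G(V) = 0$ in $K_*(C^*_r(G))$, then $\langle \Phi^P_x, \DInd_K^G(V)\rangle = 0$ for every maximal cuspidal parabolic $P = MAN$ and every $x$ in the maximal torus $T < K \cap M$, simply because the pairing is well-defined on $K$-theory. Since $G$ is linear reductive, the fundamental Cartan subgroup is unique up to conjugacy and consequently so is the maximal cuspidal parabolic, so there is essentially one $P$ to consider; the associated $T$ is then a maximal torus in $K$.

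Next, by \eqref{eq pairing GK} this vanishing forces
\[
\chi_V(x) \cdot \chi_{S_{\kk/(\kk \cap \km)}}(x) = 0
\]
for every $x$ in the Zariski-dense open subset of $T$ on which $\chi_{S_{\kp \cap \km}}(x) \neq 0$. Both factors on the left are regular (Laurent polynomial) functions on $T$, after possibly passing to the double cover $\tilde K$ that carries the $\Spin$ lift $\widetilde{\Ad}$ to accommodate the half-weights. By continuity (equivalently, by Zariski density), the identity then holds on all of $T$, and hence in the Laurent polynomial ring itself.

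The conclusion is then algebraic. By Lemma \ref{lem char Skm}, since $P$ is maximal cuspidal, $T$ is maximal in $K$, so all $T$-weights $\lambda$ of $\kk/(\kk \cap \km)$ are nonzero. Thus $\chi_{S_{\kk/(\kk \cap \km)}} = \prod_\lambda (e^{\lambda/2} - e^{-\lambda/2})$ is a nonzero element of the (double-cover) Laurent polynomial ring, which is an integral domain. Therefore $\chi_V = 0$, and since the restriction map $R(K) \hookrightarrow R(T)$ is injective, $V = 0$ in $R(K)$.

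The main potential obstacle is justifying the passage from pointwise vanishing on a Zariski-open subset to the polynomial identity in the representation ring, but this is routine since the complement of the zero locus of $\chi_{S_{\kp \cap \km}}$ is dense in $T$. A secondary bookkeeping point is the use of the double cover $\tilde K$ when expressing half-characters $e^{\lambda/2}$, which does not affect the integral-domain argument.
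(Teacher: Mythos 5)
Your proof is correct and takes essentially the same route as the paper: apply \eqref{eq pairing GK} to conclude that $\chi_y$ vanishes on $T$, then use that $T$ is a maximal torus of $K$. The only difference is that the paper defers the step ``character of $y$ is zero on $T$'' to the proof of Corollary 4.5 in \cite{HW19}, whereas you make it self-contained by clearing the denominator on the regular set, appealing to Zariski density, and then cancelling the nonzero factor $\chi_{S_{\kk/(\kk\cap\km)}}$ in the integral domain $R(\tilde T)$; this is exactly the content of the cited argument.
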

\begin{proof}
Let $y \in R(K)$, and suppose that $\DInd_K^G(y)=0$.
As in the proof of Corollary 4.5 in \cite{HW19}, it follows from \eqref{eq pairing GK} that the character of $y$ is zero on $T$. Because $T$ is a maximal torus in $G$, it follows that $y = 0$.
\end{proof}


Another consequence of \eqref{eq pairing GK} is that pairing with $\Phi^P_x$ detects all information about  classes in $K_*(C^*_r(G))$ for maximal $P$, and no information for non-maximal $P$.
\begin{corollary}\label{cor sep pts}
If $P$ is a maximal cuspidal  parabolic subgroup, then pairing with $\Phi^P_x$ for regular $x \in K \cap M$ separates points in $K_*(C^*_r(G))$.
If $P$ is  not a maximal cuspidal  parabolic subgroup, then the pairing of  $\Phi^P_x$ with any class in $K_*(C^*_r(G))$   is zero. 
\end{corollary}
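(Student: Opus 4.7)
The plan is to reduce both parts of the corollary to the explicit character formula \eqref{eq pairing GK} via the Connes--Kasparov isomorphism, which for linear reductive $G$ identifies $K_*(C^*_r(G))$ with $R(K)$ through Dirac induction. Thus every class in $K_*(C^*_r(G))$ is of the form $\DInd_K^G(y) = \sum_i n_i \ind_G(D_{G/K}^{V_i})$ for some $y=\sum_i n_i[V_i] \in R(K)$, and it suffices to understand pairings of the form $\langle \Phi^P_x, \ind_G(D_{G/K}^{V_i})\rangle$.

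First I would dispose of the non-maximal case. When $P$ is not a maximal cuspidal parabolic, Lemma \ref{lem char Skm} gives $\chi_{S_{\kk/(\kk \cap \km)}} \equiv 0$ on $T \cap M$. Substituting this into \eqref{eq pairing GK} makes the right-hand side vanish identically in $V$ and $x$, so by linearity $\langle \Phi^P_x, \DInd_K^G(y)\rangle = 0$ for every $y \in R(K)$. Combined with surjectivity of Dirac induction, this proves the second assertion.

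For the maximal cuspidal case, suppose $\alpha = \DInd_K^G\bigl(\sum_i n_i [V_i]\bigr)$ pairs trivially with $\Phi^P_x$ for every regular $x \in T$. Since $P$ is maximal cuspidal, $T = T \cap M$ is a maximal torus of $K$, so the weights of $\kk/(\kk \cap \km)$ on $T$ are precisely the roots of $(K,T)$ that are not roots of $(K\cap M, T)$; together with Lemma \ref{lem char Skm} this identifies $\chi_{S_{\kk/(\kk\cap\km)}}$ (up to sign) with a Weyl-type denominator, which does not vanish on regular elements of $T$. Similarly, restricted to $T$ the spin character $\chi_{S_{\kp\cap\km}}$ is a product of factors $e^{\beta/2}\pm e^{-\beta/2}$ over weights of $\kp \cap \km$, nonzero on the regular set. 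Formula \eqref{eq pairing GK} therefore reduces the vanishing hypothesis to $\sum_i n_i \chi_{V_i}(x) = 0$ on the regular elements of $T$, hence on all of $T$ by continuity. By the Weyl integration formula and orthogonality of characters of $K$, this forces $\sum_i n_i [V_i] = 0$ in $R(K)$, and injectivity of $\DInd_K^G$ (just established in the preceding corollary) yields $\alpha = 0$.

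The main obstacle is ensuring that the factors $\chi_{S_{\kk/(\kk\cap\km)}}$ and $\chi_{S_{\kp\cap\km}}$ appearing in \eqref{eq pairing GK} are genuinely nonvanishing on regular $x \in T$ in the maximal cuspidal case --- so that dividing them out is legitimate and the resulting conclusion $\chi_y(x)=0$ on the regular set genuinely characterises $y=0$. This is where the hypothesis that $P$ is maximal cuspidal is essential: only then does $T$ have maximal rank in $K$, so that its regular elements separate characters of $K$. Once this is in hand, both parts of the corollary are immediate consequences of \eqref{eq pairing GK} combined with the Connes--Kasparov isomorphism.
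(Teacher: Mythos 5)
Your proof is correct and follows essentially the same route as the paper: for the non-maximal case you invoke Lemma \ref{lem char Skm} to kill $\chi_{S_{\kk/(\kk\cap\km)}}$ and then use surjectivity of Dirac induction, exactly as the paper does; for the maximal case the paper simply cites Corollary 4.1 of \cite{HW19}, and your argument is a faithful reconstruction of that argument (identify $\chi_{S_{\kk/(\kk\cap\km)}}$ with $\Delta^K_T/\Delta^{K\cap M}_T$, note nonvanishing on $T^{\reg}$, deduce $\chi_y\equiv 0$ on $T$ hence on $K$, conclude $y=0$ by orthogonality of characters). One small redundancy: once you have $y=0$ in $R(K)$ and $\alpha=\DInd_K^G(y)$, the conclusion $\alpha=0$ follows immediately from $\DInd_K^G(0)=0$; injectivity of Dirac induction, which you invoke at the end, is not actually needed at that point.
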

\begin{proof}
The first claim follows from \eqref{eq pairing GK} as in Corollary 4.1 in \cite{HW19}. Here one uses surjectivity of Dirac induction.
 For the second claim, we use the fact that $S_{\kk/(\kk  \cap \km)}$ is zero as a virtual representation of $K \cap M$, if $P$ is not maximal (see Lemma \ref{lem char Skm}). By surjectivity of Dirac induction, \eqref{eq pairing GK} implies that the pairing of $\Phi^P_x$ with any class in $K_*(C^*_r(G)) = K^*(\cC(G))$ is zero. 
\end{proof}
The first part of Corollary \ref{cor sep pts} means that Theorem \ref{thm index} is a complete topological description of $\ind_G(D)$, with no loss of information.

\begin{remark}
We expect that for non-maximal $P$, the class of $\Phi^P_x$ in the cyclic cohomology of $\cC(G)$ is zero. This is a stronger statement than the second part  of Corollary \ref{cor sep pts}. Proving the vanishing of these cohomology classses will likely involve detailed information on the structure of $\cC(G)$ in terms of representation theory, as obtained in \cite{Arthur75, CCH16}. Vanishing of the pairings of these classes with elements of $K_*(C^*_r(G))$, as in  Corollary \ref{cor sep pts}, follows directly from Theorem \ref{thm index}, and does not require knowledge of the precise structure of $\cC(G)$.
\end{remark}

In Definition 5.3 in \cite{ST19}, a generator $Q_V$ of $K_*(C^*_r(G))$ is defined  independently of Dirac induction. 
Theorem 5.4 in \cite{ST19} implies that 
\beq{eq ST}
\langle \Phi^P_x, Q_V \rangle  = (-1)^{\dim(A)} \frac{\chi_V(x) \Delta^K_T(x)}{\chi_{S_{\kp \cap \km}} (x) \Delta^{K \cap M}_{T} (x)},
\eeq
where $\Delta^K_T$ and $\Delta^{K \cap M}_{T}$ are Weyl denominators for choices of positive roots for $(K, T)$ and $(K \cap M, T)$, respectively. (Note that $T \subset M$ for a maximal parabolic.)
The character of $S_{\kk/(\kk \cap \km)}$ restricted to $T \cap M$ equals \eqref{eq char Skm}, which also equals $\frac{\Delta^K_T}{\Delta^{K \cap M}_{T}}$. 
Because of this equality and \eqref{eq pairing GK} and \eqref{eq ST}, we find that for regular $x \in K \cap M$,
\[
\langle \Phi^P_x, \ind_G(D_{G/K}^V) \rangle = (-1)^{\dim(M/(K \cap M))/2+\dim(A)}\langle \Phi^P_x, Q_V \rangle.
\]
If $P$ is a maximal cuspidal parabolic, then the first part of Corollary \ref{cor sep pts} allows us to deduce that 
%
%
\[
\ind_G(D_{G/K}^V) =  (-1)^{\dim(M/(K \cap M))/2+\dim(A)} Q_V,
\] 
i.e.\ the generators defined by Dirac induction equal the generators $Q_V$, up to a sign.

\subsection{Non-cuspidal parabolics} \label{sec cuspidal}

If $P<G$ is a parabolic subgroup but not cuspidal, then the pairing on the left hand side of \eqref{eq index} is still defined. And as in \eqref{eq ind G ind M} and \eqref{eq ind M}, this pairing equals $\langle \Phi_x^M, \ind_{M}(D_{X_M})\rangle$. Because $\Phi^M_x$ is now the orbital integral trace on $\cC(M)$, Theorem 3.2(b) in \cite{HW19} implies that this pairing is zero. Hence for non-cuspidal $P$, the left hand side of \eqref{eq index} is zero. As in the proof of Corollary \ref{cor sep pts}, this fact and surjectivity of Dirac induction imply that pairing with $\Phi^P_x$ is the zero map on $K$-theory.

Now suppose that $G$ has discrete series representations. Consider the parabolic subgroup $P = G<G$. Then, because  $\Phi^G_x$ is  the orbital integral trace,   Proposition 5.1 in \cite{HW2} implies that for all regular $x \in T$,  the pairing of $\Phi^G_x$ with the $K$-theory class of a discrete series representation $\pi$ is the value of the character of $\pi$ at $x$, and in particular nonzero for some $x$. By the previous paragraph, this implies that $G$ is a cuspidal parabolic subgroup of itself.  (Here we do not need the full surjectivity of Dirac induction, just the fact that discrete series classes can be realised by Dirac induction.) And that means that $G$ has a compact Cartan subgroup. Thus we recover one half of Harish-Chandra's result that $G$ has a discrete series if and only if it has a compact Cartan. (This  could also have been deduced directly from Theorem 3.2(b) in \cite{HW19}.)


The authors have not checked in detail if this result by Harish-Chandra  was used in the proofs of any of the results from representation theory that we have applied, or in the proofs of any of the results we refer to. So we do not  claim here  that our proof of necessity of existence of a compact Cartan for the existence of the discrete series is independent of Harish-Chandra's. But at  least this is a possibly interesting illustration of the links of our results to representation theory.

\subsection{Complex semisimple groups}\label{sec complex}

Now suppose that $G$ is a complex semisimple group. Then the maximal parabolic subgroup $P$ is also minimal, i.e.\ a Borel subgroup. It is a classical fact that now $K/T \cong G/P$, as complex manifolds. And $M=T$, so we have an $M$-equivariant diffeomorphism
\[
X/AN = X_M = M \times_{K \cap M} Y = Y.
\]
So by 
 Proposition \ref{prop ind G MA} 
and Lemmas 
 \ref{lem pairing product psi}
and \ref{lem phi A DA}, 
\beq{eq ind complex}
\langle \Phi^{P}_x, \ind_{G} (D)\rangle = 
\ind_{T} (D_{Y})(x).
\eeq
The right hand side is the classical $T$-equivariant index of $D_Y$ evaluated at $x$. This is given by the Atiyah--Segal--Singer fixed point formula \cite{Atiyah68,ASIII}.

As an example, suppose that $D = D_{G/H}^V$ as in \eqref{eq CM}. Then \eqref{eq ind complex} implies that \eqref{eq CM} becomes
%
\[
\langle \Phi^P_e, \ind_G(D_{G/H}^V)\rangle = 
\ind (D_{K/H}^V),
\]
for a Dirac operator $D_{K/H}^V$ on $K/H$ coupled to $K \times_H V$. The right hand side is nonzero in many cases, for example if $H=T$, and $V$ is such that $\ind_K(D_{K/T}^V)$ is an irreducible representation of $K$ by the Borel--Weil--Bott construction. Then $\ind (D_{K/H}^V)$ is the dimension of this representation, up to a sign. This shows that Corollary \ref{cor CM} is nontrivial, also in the non-equal rank case.
\begin{example}
Suppose that $G = \SL(2, \C)$, $K = \SU(2)$ and $H = T = \U(1)$. Let $D_{G/T}$ be the Dolbeault--Dirac operator on $G/T$ coupled to $G \times_T \C_n$, where
$\C_n$ is the representation of $\U(1)$ in $\C$ with weight $n \in \Z_{\geq 0}$. Then for regular $x = e^{i\alpha} \in T$, the Borel--Weil theorem and \eqref{eq ind complex} imply that 
\[
\langle \Phi^{P}_x, \ind_{G} (D_{G/T})\rangle = \frac{\sin( (n+1)\alpha)}{\sin(\alpha)},
\]
the character of the irreducible representation  of $\SU(2)$ of dimension $n+1$, evaluated at $x$.
\end{example}

\section{Notation}

The numbers in brackets indicate on what page each symbol was introduced.
\begin{itemize}
\item Groups: $G$ (\pageref{pag G}), $K$ (\pageref{pag K}), $P$ (\pageref{pag P}), $M$ (\pageref{pag M}), $A$ (\pageref{pag A}), $N$ (\pageref{pag N}), $Z$ (\pageref{pag Z}), $T$ (\pageref{pag T});
\item Manifolds: $X$ (\pageref{pag X}), $Y$ (\pageref{pag Y}), $X_{MA}$ (\pageref{pag XMA}), $X_M$ (\pageref{pag XM});
\item Clifford modules and $\Spin$-representations: $S$ (\pageref{pag S}),  $S_{AN}$ (\pageref{pag SAN}), $S_Y$ (\pageref{pag SY}), $S_{\kp}$ (\pageref{pag Sp}), $S_{\ka}$ (\pageref{pag Sa}), $S_{\kp \cap \km}$ (\pageref{pag Spm}), $S_{\kk/(\kk \cap \km)}$ (\pageref{pag Skm}), $S_N$ (\pageref{pag SN});
\item Vector bundles: $W$ (\pageref{pag W}), $E$ (\pageref{pag E}),  $W_{AN}$ (\pageref{pag WAN}),  $F_{AN}$ (\pageref{pag FAN}), $\cN$ (\pageref{pag cN}), $L_{\det}$ (\pageref{pag Ldet}), $L_{\det}^N$ (\pageref{pag LdetN}), $W_N$ (\pageref{pag WN}), $F_N$ (\pageref{pag FN});
\item Dirac operators: $D$ (\pageref{pag D}), $D_{G, K}$ (\pageref{pag DGK}), $D_Y$ (\pageref{pag DY}),  $D_{X_{MA}}$ (\pageref{pag DXMA}), $D_{MA/(K\cap M)}$ (\pageref{pag DMAK}), $D_{MA, K\cap M}$ (\pageref{pag DMAKM}), $D_{Y,M}$ (\pageref{pag DYM}), $D_{X_M}$ (\pageref{pag DXM}), $D_{M/(K \cap M)}$ (\pageref{pag DMK}), $D_{M, K \cap M}$   (\pageref{pag DMKM});
\item Cyclic cocycles: $\Phi^P_x$ (\pageref{pag PhiPx}),  $\tilde \Phi^M_x$ (\pageref{pag tilPhiMx});
\item Functions and maps: $H$ (\pageref{pag H}), $\cC(G)$ (\pageref{pag CG}),  $\chi_x$ (\pageref{pag chix}), $\chi_A$ (\pageref{pag chiA}), $q$ (\pageref{pag q}), $q_A$ (\pageref{pag qA}), 
$f^N$ (\pageref{pag fN}), $\kappa^N$ (\pageref{pag kappaN});
\item Miscellaneous:  $l$ (\pageref{pag l}), $x$ (\pageref{pag x}), $q_t$ (\pageref{pag qt}), $p_E$ (\pageref{pag pE}).
\end{itemize}

\bibliographystyle{plain}

\bibliography{mybib}

\end{document}